\documentclass[10pt]{article}
\usepackage{bqf-paper}
\usepackage{amsthm}
\usepackage{nameref}
\usepackage{appendix}
\usepackage{titling}

\setlength{\droptitle}{-1em}

\def\ZZ{\mathbb{Z}}
\def\QQ{\mathbb{Q}}
\def\RR{\mathbb{R}}
\def\CC{\mathbb{C}}

\definecolor{codegreen}{rgb}{0,0.6,0}
\definecolor{codegray}{rgb}{0.5,0.5,0.5}
\definecolor{codepurple}{rgb}{0.58,0,0.82}
\definecolor{backcolour}{rgb}{0.95,0.95,0.92}

\newcommand\blfootnote[1]{
  \begingroup
  \renewcommand\thefootnote{}\footnote{#1}
  \addtocounter{footnote}{-1}
  \endgroup
}

\lstdefinestyle{mystyle}{
    backgroundcolor=\color{backcolour},   
    commentstyle=\color{codegreen},
    keywordstyle=\color{magenta},
    numberstyle=\tiny\color{codegray},
    stringstyle=\color{codepurple},
    basicstyle=\ttfamily\footnotesize,
    breakatwhitespace=false,         
    breaklines=true,                 
    captionpos=b,                    
    keepspaces=true,                 
    numbers=left,                    
    numbersep=5pt,                  
    showspaces=false,                
    showstringspaces=false,
    showtabs=false,                  
    tabsize=2
}

\lstset{style=mystyle}

\newcommand{\kset}{K}

\title{\textbf{On Three-Term Linear Relations for Theta Series of Positive-Definite Binary Quadratic Forms}}
\author{
  Rahul Saha* and 
  Jonathan Hanke* \\
}
\date{\today}
\begin{document}

\maketitle
\blfootnote{* Differentiated contributions as first author and senior author respectively. Rahul Saha implemented the extended refinement algorithm in \texttt{SageMath}, established \autoref{lemma:linear} and \autoref{lemma:non_trivial_relation_theorem}, and handled the $a+b \ge 4$ case. Jonathan Hanke introduced the extension of Schiemann's method to determine all theta series satisfying a given linear relation, and supervised the first author's Princeton undergraduate junior paper and reading courses on this topic. This paper was jointly written, with numerous unattributed comments between the authors to improve the clarity of exposition.}
\vspace{-30pt}
\abstract{
    In this paper, we investigate three-term linear relations among theta series of positive-definite integral binary quadratic forms. We extend Schiemann's methods to characterize all possible three-term linear relations among theta series of such forms, providing necessary and sufficient conditions for such relations to exist. To accomplish this, we develop, implement, and execute a novel extended refinement algorithm on polyhedral cones. We show that there is exactly one non-trivial three-term linear relation: it involves quadratic forms with discriminants $-3, -12, -48$, all in the same rational squareclass $-3(\mathbb{Q}^\times)^2$.

\vspace{-15pt}

\setcounter{tocdepth}{2}
\tableofcontents
\addtocontents{toc}{\vspace{-11pt}}
\newpage

\section{Introduction}
\label{sec:introduction}

Let $Q$ be a positive-definite integer-valued binary quadratic form, which we denote by
$$
Q(\vec{x}) := Q(x,y) := ax^2 + bxy + cy^2 
$$
where $a, b, c \in \ZZ$ and $\mathrm{Disc}(Q) := \Delta(Q) := b^2 - 4ac < 0$. For any non-negative integer $m \in \Z_{\ge 0}$, we consider the sets 
$$
\rep_Q(m) := \{ \vec{x} \in 
\ZZ^{\oplus 2} 
\mid 
Q(\vec{x}) = m\}
$$
of representation vectors $\vec{x} := (x, y)$, whose cardinality defines the {\bf representation numbers} 
$$
r_Q(m) := \#\rep_Q(m).
$$  
We note that the representation numbers $r_Q(m)$ are unchanged when we perform any 
$\ZZ$-invertible linear change of variables $\vec{x}' := A\vec{x}$ (i.e. where both $A$ and $A^{-1} \in \mathrm{GL}_{d=2}(\ZZ)$)
on $Q(\vec{x})$ to obtain another {\bf $\ZZ$-equivalent} quadratic form $Q'(\vec{x}')$, 
and we denote this $\ZZ$-equivalence by $Q \sim_{\ZZ} Q'$.

The representation numbers $r_{Q}(m)$ of a positive-definite integer-valued quadratic form $Q$ collectively define its {\bf theta series}
$$
\uptheta_Q \defeq \uptheta_Q(z) \defeq \sum\limits_{m \ge 0} r_{Q}(m) q^m,
$$
where $q:= e^{2\pi iz}$ and $z:= x + iy\in \CC$ with $y > 0$. The theta series $\uptheta_Q(z)$ encodes much of the arithmetic of the quadratic form $Q$, and determining its properties has been a central question of Number Theory and many other related areas of mathematics.

\smallskip
One such question  about the spectrum of the Laplacian on a Riemannian manifold $M$ popularized by Kac in the 1960s is ``Can you hear the shape of a drum?'', which asks if one can determine the underlying manifold from its Laplacian eigenvalues (with multiplicity).  
In the special case where $M$ is a flat torus of dimension $d$ (i.e. $M = \RR^d / \Lambda$ for some full rank lattice $\Lambda \subset \RR^d$), this is equivalent to asking 
$$
\begin{matrix}
\text{``Can we determine (the $\ZZ$-equivalence class of) a positive-definite integer-valued }
\\ 
\text{quadratic form $Q$ in $d$ variables from its theta series $\uptheta_Q(z)$?''}
\end{matrix}
$$
This was definitively answered by Schiemann in the 1990s using a novel computational approach of polyhedral cone decompositions, 
where he defined and executed an algorithm to answer this question.  Using this approach, he showed that the 
uniqueness result is true when $d = 3$, 
and gave an example when $d=4$ where uniqueness fails.  (See \cite{MR4520776} for a recent survey of this topic.)

\medskip
In this paper we propose to view Schiemann's results from the perspective of {\it linear relations} among theta series $\uptheta_{Q}(z)$ of quadratic forms, and observe that in this context he answers the question ``Are there any {\it non-trivial} 2-term linear relations 
\begin{equation}
\label{eqn:two_term_linear}
\alpha_1 \uptheta_{Q_1} + \alpha_2 \uptheta_{Q_2}  = 0
\end{equation}
among the theta series of positive-definite integer-valued quadratic forms of dimension $d$?''. (No when $d \leq 3$, and yes when $d \geq 4$, where necessarily $\alpha_1 = -\alpha_2$.) We then supplement and substantially extend his polyhedral cone approach to determine \underline{all} (trivial and non-trivial) \underline{3-term} linear relations 
\begin{equation}
\label{eqn:three_term_linear}
\alpha_1 \uptheta_{Q_1} + \alpha_2 \uptheta_{Q_2} + \alpha_3 \uptheta_{Q_3}  = 0
\end{equation}
that exist between theta series of positive-definite integer-valued binary quadratic forms (i.e. $d=2$), with our main result being:

\begin{theorem}
\label{thm:theorem_1_0}
Suppose $Q_1, Q_2, Q_3$ are positive-definite integer-valued binary quadratic forms satisfying the 3-term linear relation 
$$
\alpha_1 \uptheta_{Q_1} + \alpha_2 \uptheta_{Q_2} + \alpha_3 \uptheta_{Q_3}  = 0
$$
for some $\alpha_1, \alpha_2, \alpha_3 \in \RR$.  Then, up to reordering of indices $i \in \{1,2,3\}$,  
exactly one of the following holds:
\begin{enumerate}
\item[1)] {\bf (Degenerate Solutions)} All $\alpha_i = 0$, and all $Q_i$ are arbitrary.
\item[2)] {\bf (Trivial 2-term Solutions)} $\alpha_1 = -\alpha_2 \neq 0$, $\alpha_3 = 0$,   $Q_1 \sim_\ZZ Q_2$, and $Q_3$ is arbitrary.
\item[3)] {\bf (Trivial 3-term Solutions)} All $\alpha_i \neq 0$, $\alpha_1 + \alpha_2 + \alpha_3 = 0$, and  $Q_1 \sim_\ZZ Q_2 \sim_\ZZ Q_3$.
\item[4)] {\bf (Non-Trivial 3-term Solutions)}  $6 \alpha_1 = 3\alpha_2 = -2\alpha_3 \neq 0$, giving the relation
\begin{equation}
\label{eqn:non_trivial_eqn}
\frac{1}{3} \uptheta_{Q_1} + \frac{2}{3} \uptheta_{Q_2} = \uptheta_{Q_3},
\end{equation}
where 
$$
Q_1 \sim_\ZZ c(x^2 + xy + y^2), 
\qquad
Q_2 \sim_\ZZ 4c(x^2 + xy + y^2),
\qquad
Q_3 \sim_\ZZ c(x^2 + 3y^2),
$$
for some positive integer $c \in \ZZ_{>0}.$
\end{enumerate}
\end{theorem}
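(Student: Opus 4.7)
The plan is to peel off the easy cases using constant-term and vanishing-coefficient arguments, and then attack the hard all-nonzero case by extending Schiemann's polyhedral cone refinement to the $3$-term setting. \textbf{Step 1 (constant term).} Since each $Q_i$ is positive-definite we have $r_{Q_i}(0) = 1$, so comparing constant terms on both sides of the relation yields $\alpha_1 + \alpha_2 + \alpha_3 = 0$; this identity is consistent with (and required by) all four cases. \textbf{Step 2 (vanishing coefficients).} If all $\alpha_i = 0$ we are in case (1). Two of the $\alpha_i$ cannot vanish alone, since that would force the third to vanish against the nonzero constant term of $\uptheta_{Q_j}$. If exactly one vanishes, say $\alpha_3 = 0$, then Step 1 gives $\alpha_1 = -\alpha_2 \ne 0$ and hence $\uptheta_{Q_1} = \uptheta_{Q_2}$, and Schiemann's $d = 2$ theta-uniqueness theorem yields $Q_1 \sim_\ZZ Q_2$, which is case (2).

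For the all-nonzero case I would normalize $\alpha_3 = -1$ (so $\alpha_1 + \alpha_2 = 1$) and bring each $Q_i = a_i x^2 + b_i xy + c_i y^2$ into Minkowski-reduced form $0 \le b_i \le a_i \le c_i$. The combined data $(a_1, b_1, c_1, a_2, b_2, c_2, a_3, b_3, c_3, \alpha_1)$ then lives in a polyhedral cone in $\RR^{10}$, and the relation imposes the infinite family of linear constraints $\alpha_1 r_{Q_1}(m) + (1 - \alpha_1) r_{Q_2}(m) = r_{Q_3}(m)$ for every $m \ge 1$. On each subcone of the current decomposition the representation numbers $r_{Q_i}(m)$ are known piecewise-affine functions of the $(a_i, b_i, c_i)$, so each such constraint is either automatically satisfied or cuts the subcone along a hyperplane. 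The extended refinement algorithm iteratively picks an $m$, tests the corresponding equation, and subdivides where necessary; I expect \autoref{lemma:linear} and \autoref{lemma:non_trivial_relation_theorem} to provide the key structural input needed to control the new parameter $\alpha_1$ alongside the three form-triples and to guarantee termination.

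The main obstacle is this combinatorial control, which I would handle by splitting into cases according to the smallest value of $a_i + b_i$ among the three reduced forms. For $a + b \ge 4$ the bounds from \autoref{lemma:non_trivial_relation_theorem} should be strong enough to force $Q_1 \sim_\ZZ Q_2 \sim_\ZZ Q_3$ on every surviving cone, which combined with Step 1 produces case (3). For $a + b \le 3$ only finitely many discriminant classes are in play (those of $x^2 + y^2$, $x^2 + xy + y^2$, and $x^2 + xy + 2y^2$, together with small scalar multiples), and running the refinement explicitly in SageMath should output exactly the exceptional family of case (4): the triple $c(x^2 + xy + y^2)$, $4c(x^2 + xy + y^2)$, $c(x^2 + 3y^2)$ with coefficient ratio $6 : 3 : -2$.

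Finally, I would verify the identity in case (4) directly, both as a sanity check on the algorithm and to complete the existence claim. After scaling by $c$ it suffices to prove the $c = 1$ identity $\frac{1}{3} \uptheta_{x^2 + xy + y^2} + \frac{2}{3} \uptheta_{4(x^2 + xy + y^2)} = \uptheta_{x^2 + 3y^2}$. Writing $Q_1 = x^2 + xy + y^2$ and $Q_3 = x^2 + 3y^2$, the substitution $(x, y) \mapsto (x + y, -2y)$ gives $Q_3(x, y) = Q_1(x + y, -2y)$ and has image $\{(u, v) \in \ZZ^2 : v \text{ even}\}$, so $r_{Q_3}(m)$ counts representations of $m$ by $Q_1$ with even second coordinate. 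The $120^\circ$ rotational automorphism $(x, y) \mapsto (-y, x + y)$ of $Q_1$ cyclically permutes the three parity classes of $\ZZ^2$ other than (even, even); if $B$ denotes the common representation count by $Q_1$ in each of these three classes, then the (even, even) count equals $r_{Q_2}(m)$, so $r_{Q_1}(m) = r_{Q_2}(m) + 3B$ and $r_{Q_3}(m) = r_{Q_2}(m) + B = \frac{1}{3} r_{Q_1}(m) + \frac{2}{3} r_{Q_2}(m)$, as desired.
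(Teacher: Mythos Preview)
Your bookends are fine: the constant-term and vanishing-coefficient reductions in Steps~1--2 are correct, and your final-paragraph verification of the identity $\tfrac{1}{3}r_{Q_1}(m)+\tfrac{2}{3}r_{Q_2}(m)=r_{Q_3}(m)$ via the parity-permuting automorphism of $x^2+xy+y^2$ is clean and essentially the paper's own argument for \autoref{lemma:non_trivial_relation_theorem}.

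The genuine gap is in the middle, and it stems from a misreading of the paper's notation. The symbols $a,b$ in the paper are \emph{not} coefficients of any quadratic form; they are the coprime non-negative integers appearing after one normalizes the rational relation to $\tfrac{a}{a+b}\uptheta_{Q_1}+\tfrac{b}{a+b}\uptheta_{Q_2}=\uptheta_{Q_3}$ (see \autoref{remark:integral_form}). The case split ``$a+b=1,2,3,\ge 4$'' is a split on the \emph{denominator of the coefficient $\alpha_1$}, not on $a_i+b_i$ for a reduced form $a_ix^2+b_ixy+c_iy^2$. Your proposed split by ``smallest $a_i+b_i$'' is scale-noninvariant (replacing $Q_i$ by $cQ_i$ changes it) and has no relation to the structure of the problem; and \autoref{lemma:non_trivial_relation_theorem} supplies no bounds at all---it is only the verification of the exceptional identity.

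This misreading hides the step that actually makes the algorithm finite. The paper first disposes of irrational $\alpha_1$ via \autoref{lemma:irrational} (irrational $\alpha$ forces $\uptheta_{Q_1}=\uptheta_{Q_2}=\uptheta_{Q_3}$), and only then, for each \emph{fixed} integer pair $(a,b)$, runs a refinement in $\RR^9=\mathcal V^3$. The engine is the Key Lemma (\autoref{lemma:linear}): for fixed $(a,b)$ every non-negative integer solution of $\tfrac{a}{a+b}x+\tfrac{b}{a+b}y=z$ is a $\ZZ_{\ge 0}$-combination of three explicit vectors $\mathcal L_{a,b}$, so at each refinement step there are only finitely many branching choices for the next block of minimal vectors. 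Your $\RR^{10}$ setup carries $\alpha_1$ as a continuous parameter, never establishes rationality, and therefore has no analogue of this finite branching; the constraints $\alpha_1 r_{Q_1}(m)+(1-\alpha_1)r_{Q_2}(m)=r_{Q_3}(m)$ are not linear in the form coefficients (the $r_{Q_i}(m)$ are piecewise \emph{constant}, not piecewise affine), and without discretizing $\alpha_1$ there is no mechanism ensuring the refinement terminates or that the $a+b\ge 4$ cases collapse uniformly.
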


\section{The \texorpdfstring{$\mathrm{GL}_2(\ZZ)$}{GL2(Z)} Reduction Domain \texorpdfstring{$\mathcal{V}$}{\mathcal{V}}}

In this section we define the (usual) fundamental domain $\mathcal{V}$ for the action of $\mathrm{GL}_2(\ZZ)$ on real-valued 
positive-definite binary quadratic forms, and give its explicit description as a polyhedral cone.

\medskip

\begin{convention}[\textbf{Binary Quadratic Forms as Tuples}] 
\label{convention:BQFs_as_tuples}
Throughout this paper, we adopt the convention of identifying real-valued binary quadratic forms with $\R^3$ via the bijection
$$
Q(x, y) \defeq q_{11} x^2 + q_{12} xy + q_{22} y^2 
\mapsto
(q_{11}, q_{22}, q_{12}) \in \RR^3.
$$
We also identify $Q$ with the symmetric matrix $\left[\begin{smallmatrix}
    q_{11} & q_{12}/2 \\ 
    q_{12}/2 & q_{22} \\ 
\end{smallmatrix}\right]$, noting that \[Q(x, y) = \begin{bmatrix}
    x & y \\
\end{bmatrix} \begin{bmatrix}
    q_{11} & q_{12}/2 \\ 
    q_{12}/2 & q_{22} \\ 
\end{bmatrix} \begin{bmatrix}
    x \\ 
    y \\ 
\end{bmatrix}. \]  
\end{convention}

\medskip
  
\begin{definition}[\textbf{$\mathrm{GL}_2(\ZZ)$ Reduction Domain}] 
\label{def:reduced_forms}
Using the identification in \autoref{convention:BQFs_as_tuples}, we define the set $\mathcal{V} \subset \R^3$ of $\mathrm{GL}_2(\ZZ)$-\textbf{reduced positive-definite binary quadratic forms} by 
\[(q_{11}, q_{22}, q_{12}) \in \mathcal{V} \iff \begin{cases} 
q_{22} - q_{11} \ge 0,  \\ 
q_{12} \ge 0, \\ 
q_{11} - q_{12} \ge 0, \text{ and }\\ 
q_{11} > 0, \\
\end{cases} \]
and also let $\overline{\mathcal{V}}$ denote closure of $\mathcal{V}$.  
We say that a positive-definite binary quadratic form $Q(x,y)$ is {\bf reduced} if the corresponding vector $(q_{11}, q_{22}, q_{12})$ 
is in $\mathcal{V}$.
\end{definition}

\begin{lemma}[\textbf{Strong Fundamental Domain}]
  Given any real positive-definite binary quadratic form $Q$, there exists a unique (reduced) binary quadratic form 
  $Q' \in \mathcal{V}$ so that $Q \sim_\ZZ Q'$. 
\end{lemma}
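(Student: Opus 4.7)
The plan is to prove the existence and uniqueness parts separately, both via an explicit reduction procedure.

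For existence, I would give an algorithm that converts any positive-definite real binary quadratic form $Q$ into a $\mathrm{GL}_2(\ZZ)$-equivalent form in $\mathcal{V}$. Three elementary generators of $\mathrm{GL}_2(\ZZ)$ suffice: the swap $S : (x,y) \mapsto (y,x)$, the sign change $N : (x,y) \mapsto (-x,y)$, and the shears $T_n : (x,y) \mapsto (x + ny, y)$ for $n \in \ZZ$. Under these, the coefficients transform by $S : (q_{11}, q_{22}, q_{12}) \mapsto (q_{22}, q_{11}, q_{12})$, $N : (q_{11}, q_{22}, q_{12}) \mapsto (q_{11}, q_{22}, -q_{12})$, and $T_n : (q_{11}, q_{22}, q_{12}) \mapsto (q_{11}, q_{22} + n q_{12} + n^2 q_{11}, q_{12} + 2n q_{11})$. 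Given $Q$, one first applies the unique $T_n$ making $-q_{11} < q_{12} \le q_{11}$ (equivalently $n = -\lfloor (q_{12}+q_{11})/(2q_{11}) \rfloor$), then $N$ if necessary to force $q_{12} \ge 0$, then $S$ if $q_{22} < q_{11}$. If $S$ was applied, the new leading coefficient $q_{11}$ has strictly decreased, so we iterate. Termination follows because $q_{11}$ takes values in the discrete set $\{Q(\vec v) : \vec v \in \ZZ^2 \setminus \{0\}\} \cap (0, q_{11}^{\text{init}}]$, which is finite (as $Q$ is positive-definite, only finitely many lattice vectors represent bounded values).

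For uniqueness, suppose $Q, Q' \in \overline{\mathcal{V}}$ satisfy $Q' = Q \circ A$ for some $A = \left[\begin{smallmatrix} p & r \\ s & t \end{smallmatrix}\right] \in \mathrm{GL}_2(\ZZ)$. I would read the reduction inequalities as geometric constraints: $q_{11} = \min_{\vec v \in \ZZ^2 \setminus 0} Q(\vec v)$, attained at $\vec v = (1,0)$, and $q_{22} = \min \{Q(\vec v) : \vec v \in \ZZ^2,\ \vec v \notin \ZZ \cdot (1,0)\}$, attained at $\vec v = (0,1)$. This successive-minima interpretation follows directly from the reduction inequalities $q_{11} \le q_{22}$ and $0 \le q_{12} \le q_{11}$. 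Therefore $Q(A e_1) = q'_{11} = q_{11}$ forces $A e_1 = \pm(1,0)$, and $Q(A e_2) = q'_{22} = q_{22}$ forces $A e_2 \in \{\pm(0,1)\} \cup \{\pm(\pm 1, 1)\}$ (the last options only possible in the boundary equality cases). A short case analysis on the four sign choices, using $q_{12}, q'_{12} \ge 0$ together with $q_{11} = q_{22}$ or $q_{12} = q_{11}$ where relevant, shows that $A$ must act trivially on the coefficient tuple, so $Q = Q'$.

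The main obstacle is the uniqueness argument at the boundary of $\mathcal{V}$, where the stabilizer in $\mathrm{GL}_2(\ZZ)$ grows and several elements of $\overline{\mathcal{V}}$ could a priori be equivalent. Restricting to the open cone $\mathcal{V}$ (with strict inequality $q_{11} > 0$ together with the weak inequalities defining closure minus the identifications of boundary faces) eliminates these coincidences: the asymmetric choice to require $q_{12} \ge 0$ (rather than $|q_{12}| \le q_{11}$) is exactly what makes the domain a strict fundamental domain for $\mathrm{GL}_2(\ZZ)$ rather than $\mathrm{SL}_2(\ZZ)$, and the inclusions $q_{22} \ge q_{11} \ge q_{12} \ge 0$ give the unique successive-minima basis $(e_1, e_2)$ described above. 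I would conclude by noting that this explicit fundamental-domain description will be the foundation for the polyhedral cone decomposition used in subsequent sections.
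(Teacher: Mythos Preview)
Your approach is correct but takes a genuinely different route from the paper. The paper does not argue from scratch: it cites Buell for the classical $\mathrm{SL}_2(\ZZ)$ reduction domain $\mathcal{V}'$ (the standard domain with $q_{22}\ge q_{11}\ge |q_{12}|$ plus tie-breaking conventions), then observes that $\mathrm{GL}_2(\ZZ)=\mathrm{SL}_2(\ZZ)\sqcup Y\cdot\mathrm{SL}_2(\ZZ)$ with $Y=\left[\begin{smallmatrix}1&0\\0&-1\end{smallmatrix}\right]$ acting by $q_{12}\mapsto -q_{12}$, and deduces that $\mathcal{V}=\mathcal{V}'\cap\{q_{12}\ge 0\}$ is a strict $\mathrm{GL}_2(\ZZ)$ fundamental domain by appealing twice to the uniqueness already established in Buell. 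Your version is self-contained (an explicit reduction algorithm plus a successive-minima interpretation of $q_{11},q_{22}$), which is pedagogically nicer and avoids the external reference; the paper's version is quicker because the hard work is outsourced.

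One caution on your uniqueness sketch: the assertion ``$Q(Ae_1)=q_{11}$ forces $Ae_1=\pm(1,0)$'' is not literally true on the boundary faces $q_{11}=q_{22}$ (where $\pm(0,1)$, and if additionally $q_{12}=q_{11}$ also $\pm(-1,1)$, attain the minimum). You flag boundary cases for $Ae_2$ but not for $Ae_1$; the full case analysis must allow all minimal vectors as candidates for $Ae_1$ as well, and then use $\det A=\pm 1$ together with $q_{12},q_{12}'\ge 0$ to pin down $A$ up to the stabilizer. This is routine but is exactly the ``main obstacle'' you identify, and your write-up understates how many cases arise. The paper sidesteps this entirely by inheriting boundary uniqueness from the cited $\mathrm{SL}_2(\ZZ)$ result.
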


\begin{proof}
  We define the (usual) reduction domain $\mathcal{V}' \subset \RR^3$ of $\mathrm{SL}_2(\ZZ)$-reduced positive-definite binary quadratic forms as follows: 
  \[(q_{11}, q_{22}, q_{12}) \in \mathcal{V}' \iff \begin{cases} 
 q_{22} \ge q_{11} \ge |q_{12}|, \\ 
 q_{11} > 0, \\  
 q_{11} = q_{22} \implies q_{12} \ge 0, \text{ and } \\ 
 q_{11} = q_{12} \implies q_{12} \ge 0. \\ 
\end{cases} \]

By \cite[\S2]{buell1989binary}, given any real positive-definite binary quadratic form $Q$, there exists $Q' \in \mathcal{V}'$ so that $Q = X^T Q' X$ for some $X \in \mathrm{SL}_2(\ZZ)$. Furthermore we observe that \[\begin{bmatrix}
    1 & 0 \\ 
    0 & -1 \\ 
\end{bmatrix} \begin{bmatrix}
    q_{11} & q_{12}/2 \\ 
    q_{12}/2 & q_{22} \\ 
\end{bmatrix} \begin{bmatrix}
    1 & 0 \\ 
    0 & -1 \\ 
\end{bmatrix} = \begin{bmatrix}
    q_{11} & -q_{12}/2 \\ 
    -q_{12}/2 & q_{22} \\ 
\end{bmatrix}  \] 
which implies that $(q_{11}, q_{22}, q_{12}) \sim_\ZZ (q_{11}, q_{22}, -q_{12})$, and they are related by a linear transform $Y \defeq \left[\begin{smallmatrix}
    1 & 0 \\ 
    0 & -1 \\ 
\end{smallmatrix}\right]$ with determinant $-1$. Since $\det(\mathrm{GL}_2(\ZZ)) = \{\pm 1\}$, we also know $g \in \mathrm{GL}_2(\ZZ)$ has either $g \in \mathrm{SL}_2(\ZZ)$ or $gY = gY^{-1} \in \mathrm{SL}_2(\ZZ)$, giving the disjoint union
\[\mathrm{GL}_2(\ZZ) = \mathrm{SL}_2(\ZZ) \:\: \sqcup \:\: Y \cdot \mathrm{SL_2}(\ZZ).  
\] 
 Thus for any positive-definite binary quadratic form $Q$, there must exist $Q' \in \mathcal{V}'  \intersect \{(q_{11}, q_{22}, q_{12}) \in \R^3 \mid q_{12} \ge 0\}$ so that $Q \sim_\ZZ Q'$. To see $Q'$ is unique, assume that there exists another $Q'' \in \mathcal{V}' \{(q_{11}, q_{22}, q_{12}) \in \R^3 \mid q_{12} \ge 0\}$ so that $Q \sim_\ZZ Q''$. Then $Q' \sim_{\ZZ} Q''$. By uniqueness in \cite[\S2]{buell1989binary}, they cannot be related by an element of $\mathrm{SL}_2(\ZZ)$. Therefore they must be related by an element of $Y\cdot\mathrm{SL}_2(\ZZ)$, however then $Y^TQ'Y \sim_\ZZ Q''$, which also violates uniqueness in \cite[\S2]{buell1989binary}. Therefore $\mathcal{V}' \intersect \{(q_{11}, q_{22}, q_{12}) \in \R^3 \mid q_{12} \ge 0\} = \mathcal{V}$, as desired.   
\end{proof}

\smallskip
\noindent
We note that $\mathcal{V}$ can be thought of as a polyhedral cone, in the following sense:
\begin{definition}[\textbf{Polyhedral Cone $\cP(A, B)$}]
\label{def:polycone}
Suppose that $A$ and $B$ are finite subsets of $\R^n$.  Then we define the \textbf{polyhedral cone} $\cP(A, B) \subseteq \R^n$ as 

    \[\cP(A, B) \defeq \{\vx \in \R^n \mid 
    \vec{a} \cdot \vx \ge 0  \emph{ for all } \: \vec{a} \in A, 
    \emph{ and }
    \vec{b} \cdot \vx > 0 \emph{ for all }  \: \vec{b} \in B
    \}.\]    
\end{definition} 
\begin{remark}[\textbf{$\mathcal{V}$ as a Polyhedral Cone}]
\label{rem:polyconeV}
Using the notation for polyhedral cones, we can write 
$\mathcal{V}\defeq\cP(\mathbf{A}, \mathbf{B}),$
where 
$\mathbf{A} \defeq \{(-1, 1, 0), (1, 0, -1), (0, 0, 1)\}$ and  $\mathbf{B} \defeq \{(1, 0, 0)\}$.
\end{remark} 
\begin{remark}[\textbf{Edges of $\mathcal{V}$}]
\label{rem:edges_of_V}
    We can compute that the edges of $\mathcal{V}$ are given by $(0, 1, 0), (1, 1, 0),$ and $(1, 1, 1)$, and let $\cE_1, \cE_2, \cE_3$ denote the associated quadratic forms under \autoref{convention:BQFs_as_tuples}, that is, \[\cE_1(x, y) \defeq y^2, \quad \cE_2(x, y) \defeq x^2 + y^2, \quad \text{ and } \quad \cE_3(x, y) \defeq x^2 + xy + y^2.\] 
\end{remark}

\section{The Partial Ordering on Strongly Primitive Vectors}
\label{sec:pre_order}

In this section we define a relation $\preceq$ on $\Z^{\oplus 2}$ induced by $\mathcal{V}$, and the notion of a minimal subset. 

\subsection{Strongly Primitive Vectors}

In this subsection we define the notion of strongly primitive objects in several related contexts. 

\medskip
\begin{definition}[\textbf{Strongly Primitive Objects}]
\label{def:strongly_primitive_objects}
Let $\Z^{\oplus 2}_{\tilde{*}}$ be the space of all \textbf{strongly primitive vectors} $(x, y)$ so that $\gcd(x, y) = 1$ and the last non-zero coordinate is positive. For $m \in \Z_{\ge 0}$, the \textbf{strongly primitive representation set} $\rep^{\tilde{*}}_Q(m)$ of $m$ is defined as the set of integral solutions $(x, y) \in \Z^{\oplus 2}_{\tilde{*}}$ to $Q(x, y) = m$. Then the \textbf{strongly primitive representation number} $r^{\tilde{*}}_Q(m)$ is defined as the number of elements in $\rep^{\tilde{*}}_Q(m)$. This lets us define the \textbf{strongly primitive theta series} $\uptheta^{\tilde{*}}_Q$ as 
\[
\uptheta^{\tilde{*}}_Q \defeq \uptheta^{\tilde{*}}_Q(z) \defeq \sum\limits_{m \ge 0} r^{\tilde{*}}_{Q}(m) q^m.
\] 
\end{definition}

\begin{remark}[\textbf{Primitive Objects}] 
In the literature, it is common to define the set of primitive vectors $\ZZ^{\oplus 2}_*$ as the space of all $(x,y) \in \ZZ^{\oplus 2}$ with $\gcd(x, y) = 1$. The associated primitive objects $\Z^{\oplus 2}_*, \rep_Q^*(m), r^*_Q(m),$ and $\uptheta^*_Q(q)$ are defined in terms of these primitive vectors. The relationship between primitive and strongly primitive representation numbers is given by  $r^*_Q(m) = 2r^{\tilde{*}}_Q(m)$ for $m \ge 0$. Note that $\vec{0} = (0, 0)$ is neither a primitive nor a strongly primitive vector, since $\gcd (0, 0) \neq 1$. 
\end{remark}

 The next lemma allows us to pass from linear relations among theta series to linear relations among strongly primitive theta series. Note that since $\uptheta_Q$ always has constant term $1$ for a positive-definite quadratic form $Q$, it forces the coefficients $\alpha_i$ in \autoref{eqn:three_term_linear} to satisfy the relation $\alpha_1 + \alpha_2 + \alpha_3 = 0$.
\begin{lemma}[\textbf{Relating $\uptheta_{Q}$ and $\uptheta^{\tilde{*}}_{Q}$}]
Suppose $Q_1, Q_2, Q_3$ are positive-definite integer-valued binary quadratic forms and $\alpha_1, \alpha_2, \alpha_3 \in \R$ are real numbers satisfying $\alpha_1 + \alpha_2 + \alpha_3 = 0$. Then \[\alpha_1 \uptheta_{Q_1} + \alpha_2 \uptheta_{Q_2} + \alpha_3 \uptheta_{Q_3} = 0 \iff \alpha_1 \uptheta^{\tilde{*}}_{Q_1} + \alpha_2 \uptheta^{\tilde{*}}_{Q_2} + \alpha_3 \uptheta^{\tilde{*}}_{Q_3} = 0.\]
\end{lemma}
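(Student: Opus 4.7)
My plan is to relate $r_Q(m)$ and $r^{\tilde{*}}_Q(m)$ via a canonical factorization of nonzero integer vectors, and then translate the resulting $q$-series identity into the claimed equivalence using the hypothesis $\alpha_1 + \alpha_2 + \alpha_3 = 0$.

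First, I would verify that every nonzero $\vec{v} \in \ZZ^{\oplus 2}$ admits a unique factorization $\vec{v} = d \cdot \vec{w}$ with $d \in \ZZ \setminus \{0\}$ and $\vec{w} \in \ZZ^{\oplus 2}_{\tilde{*}}$: one takes the primitive vector $\vec{v}/\gcd(|x|,|y|)$ and negates it (absorbing the sign into $d$) if necessary to make its last nonzero coordinate positive, and uniqueness follows because any two parallel strongly primitive vectors with positive last nonzero coordinate must coincide. Substituting $\vec{v} = d\vec{w}$ into $Q(\vec{v}) = m$ gives $d^2 Q(\vec{w}) = m$, and pairing each $d \geq 1$ with $-d$ (which yields a distinct integer vector over the same strongly primitive $\vec{w}$) produces, for $m \geq 1$,
\[
r_Q(m) \;=\; 2 \sum_{\substack{d \geq 1 \\ d^2 \mid m}} r^{\tilde{*}}_Q(m/d^2).
\]
Since $r_Q(0) = 1$ and $r^{\tilde{*}}_Q(0) = 0$, packaging this as a $q$-series gives
\[
\uptheta_Q(z) \;=\; 1 \;+\; 2\sum_{d \geq 1} \uptheta^{\tilde{*}}_Q(d^2 z).
\]

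Applying this identity to each $Q_i$ and using $\alpha_1 + \alpha_2 + \alpha_3 = 0$ to cancel the constant terms, the combination in question becomes
\[
\sum_{i=1}^3 \alpha_i \uptheta_{Q_i}(z) \;=\; 2 \sum_{d \geq 1} H(d^2 z), \qquad \text{where } H(z) \;\defeq\; \sum_{i=1}^3 \alpha_i \uptheta^{\tilde{*}}_{Q_i}(z),
\]
and $H$ has constant term $0$. The $(\Leftarrow)$ direction is immediate. For $(\Rightarrow)$, I would write $h(n)$ for the $q^n$-coefficient of $H$ and translate the hypothesis into the M\"obius-style relation $\sum_{d \geq 1,\, d^2 \mid n} h(n/d^2) = 0$ for every $n \geq 1$. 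A strong induction on $n$ closes the argument: the case $n = 1$ forces $h(1) = 0$, and assuming $h(k) = 0$ for $1 \leq k < n$ leaves only the $d = 1$ term in the sum, forcing $h(n) = 0$.

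The only real obstacle is the careful bookkeeping in the first step --- getting the factor of $2$ right from the $\pm d$ pairing, and verifying uniqueness of the strongly primitive representative. The inductive inversion at the end is purely formal once the $q$-series identity is in hand.
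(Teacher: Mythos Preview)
Your argument is correct and follows the same overall strategy as the paper: relate $r_Q$ to $r^{\tilde{*}}_Q$ by a convolution identity and then invert. The paper states the identity as $r_Q(m) = 2\sum_{d \mid m} r^{\tilde{*}}_Q(d)$ and inverts with ordinary M\"obius inversion; your version $r_Q(m) = 2\sum_{d^2 \mid m} r^{\tilde{*}}_Q(m/d^2)$ is in fact the correct relation (since $Q(d\vec{w}) = d^{2} Q(\vec{w})$), and the paper's formula as written does not hold in general, though the logical structure of its proof is unaffected. Your strong-induction step is the natural inversion for the square-divisor convolution and plays exactly the same role as the paper's M\"obius step.
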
 

\begin{proof} \noindent \textbf{Case 1 ($m \ge 1$).} Suppose $m \ge 1$. Then we have, \[r_{Q_i}(m) = \sum\limits_{d | m} 2r^{\tilde{*}}_{Q_i}(d) \]
    for $i \in \{1, 2, 3\}$, giving 
    \begin{equation*}
        \begin{split}
           \alpha_1 r_{Q_1}(m) + \alpha_2 r_{Q_2}(m) + \alpha_3 r_{Q_3}(m) &=  \alpha_1\sum\limits_{d | m} 2r^{\tilde{*}}_{Q_1}(d)  +  \alpha_2\sum\limits_{d | m} 2r^{\tilde{*}}_{Q_2}(d) +  \alpha_3\sum\limits_{d | m} 2r^{\tilde{*}}_{Q_3}(d) \\
           &= \sum\limits_{d | m} 2(\alpha_1 r^{\tilde{*}}_{Q_1}(d)+ \alpha_2 r^{\tilde{*}}_{Q_2}(d)+ \alpha_3 r^{\tilde{*}}_{Q_3}(d)) \\
           &= \sum\limits_{d | m} 0 \\ 
           &= 0. \\
        \end{split}
    \end{equation*}
Conversely, by M\"{o}bius inversion we have
\[r^{\tilde{*}}_Q(m) = \frac{1}{2} \sum\limits_{m = dd'} \mu(d) r_Q(d')\]
where the M\"{o}bius function $\mu$ is defined by 
\[\mu(m) \defeq \begin{cases}
    +1 \quad \text{ if }  m \text{ is a square-free positive integer with an even number of prime factors,} \\
    -1 \quad \text{ if }  m \text{ is a square-free positive integer with an odd number of prime factors,} \\
    \text{ } 0 \quad \text{\:\: if }  m \text{ is not square-free.} \\
\end{cases}\]
Therefore 
    \begin{equation*}
        \begin{split}
           \alpha_1 r^{\tilde{*}}_{Q_1}(m) + \alpha_2 r^{\tilde{*}}_{Q_2}(m) + \alpha_3 r^{\tilde{*}}_{Q_3}(m) &=  \frac{1}{2} \left(\alpha_1\sum\limits_{m = dd'} \mu(d)r_{Q_1}\left(d'\right)  +  \alpha_2\sum\limits_{m = d d'}\mu(d) r_{Q_2}\left(d'\right) +  \alpha_3\sum\limits_{m = dd'} \mu(d)r_{Q_3}\left(d'\right)\right) \\
           &= \frac{1}{2}\sum\limits_{m = dd'} \mu\left(d\right) (\alpha_1 r_{Q_1}(d')+ \alpha_2 r_{Q_2}(d')+ \alpha_3 r_{Q_3}(d')) \\
           &= \frac{1}{2}\sum\limits_{m = dd'} 0 \\ 
           &= 0. \\
        \end{split}
    \end{equation*}

  \textbf{Case 2 ($m = 0$)}. Suppose $m = 0$. Then  $\alpha_1 r_{Q_1}(m) + \alpha_2 r_{Q_2}(m) + \alpha_3 r_{Q_3}(m) = \alpha_1 + \alpha_2 + \alpha_3 = 0$. Conversely, $\alpha_1 r^{\tilde{*}}_{Q_1}(m) + \alpha_2 r^{\tilde{*}}_{Q_2}(m) + \alpha_3 r^{\tilde{*}}_{Q_3}(m) = \alpha_1 \cdot 0 + \alpha_2 \cdot 0 + \alpha_3 \cdot 0 = 0$, as desired. 
\end{proof}

As a consequence of this lemma, we can focus on strongly primitive vectors and strongly primitive representation numbers throughout the rest of the paper, simplifying complexity of our computations. 

\medskip 

\subsection{The Partial Ordering} 

In this subsection we define the relation $\preceq$ and give an explicit algorithm for determining it. 

\begin{definition}[\textbf{The Relations $\preceq$ and $\succeq$}]
\label{def:pre_order}
    Suppose $\vx, \vy \in \Z^{\oplus 2}$. Then we define the relation    
    \[\vx \preceq \vy \iff Q(\vx) \le Q(\vy) \emph{ for all } Q \in \mathcal{V}. \] 
    We also define the opposite relation $\succeq$ by $\vy \succeq \vx \iff \vx \preceq \vy$. 
\end{definition}

The next lemma gives us an explicit algorithm for determining when the relation $\vx \preceq \vy$ holds.

\begin{lemma}[\textbf{Determining $\vx \preceq \vy$}]
\label{lemma:preceq_algo}
     Suppose $\vx, \vy\in \ZZ^{\oplus 2}$. Then $\vx \preceq \vy$ if and only if $\cE_i(\vx) \le \cE_i(\vy)$ for each $i \in \{1, 2, 3\}$, 
     where $\cE_i$ are quadratic forms as defined in \autoref{rem:edges_of_V}.
\end{lemma}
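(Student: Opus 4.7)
The plan is to exploit linearity in the coefficients of $Q$, together with the fact that $\overline{\mathcal{V}}$ is the conical hull of the three edge generators $\cE_1, \cE_2, \cE_3$ noted in \autoref{rem:edges_of_V}. The key observation is that, for fixed $\vx, \vy \in \ZZ^{\oplus 2}$, the map $Q \mapsto Q(\vy) - Q(\vx)$ is a linear functional on $\RR^3$ under the identification in \autoref{convention:BQFs_as_tuples}: explicitly, if $\vx = (x_1, x_2)$, then $Q(\vx) = q_{11} x_1^2 + q_{12} x_1 x_2 + q_{22} x_2^2$ is linear in $(q_{11}, q_{22}, q_{12})$. The relation $\vx \preceq \vy$ thus reduces to a linear inequality holding on the cone $\mathcal{V}$.

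For the forward direction, if $\vx \preceq \vy$, then $Q(\vy) - Q(\vx) \ge 0$ for every $Q \in \mathcal{V}$. Since this quantity is continuous (indeed linear) in $Q$, and each $\cE_i \in \overline{\mathcal{V}}$ lies in the closure of $\mathcal{V}$ (they are boundary points obtained by saturating some of the defining inequalities of \autoref{def:reduced_forms}), we immediately get $\cE_i(\vy) - \cE_i(\vx) \ge 0$ for $i \in \{1, 2, 3\}$.

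For the reverse direction, I would use the description of $\overline{\mathcal{V}}$ as a polyhedral cone with extreme rays $\cE_1, \cE_2, \cE_3$: any $Q = (q_{11}, q_{22}, q_{12}) \in \overline{\mathcal{V}}$ can be written as
\[
Q = (q_{22} - q_{11}) \cE_1 + (q_{11} - q_{12}) \cE_2 + q_{12} \cE_3,
\]
where the defining inequalities of \autoref{def:reduced_forms} guarantee that all three scalars are nonnegative. Given $\cE_i(\vx) \le \cE_i(\vy)$ for each $i$, applying the linear functional $Q \mapsto Q(\vy) - Q(\vx)$ to this decomposition yields a nonnegative combination of nonnegative quantities, so $Q(\vx) \le Q(\vy)$ for all $Q \in \overline{\mathcal{V}}$, and in particular for all $Q \in \mathcal{V}$.

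The routine part is the coefficient bookkeeping in the conical decomposition; the only slight subtlety is that $\cE_1 = (0, 1, 0) \notin \mathcal{V}$ because of the strict inequality $q_{11} > 0$, so the forward direction must invoke continuity (or equivalently, approach $\cE_1$ from within $\mathcal{V}$ via the sequence $\cE_1 + \tfrac{1}{n}\cE_2$) rather than evaluate at $\cE_1$ directly. This is the only real obstacle, and it is resolved immediately by the linearity and hence continuity of the functional in $Q$.
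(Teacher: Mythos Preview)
Your proposal is correct and follows essentially the same approach as the paper: the reverse direction uses the conical decomposition of $Q \in \overline{\mathcal{V}}$ in terms of the edge generators $\cE_1, \cE_2, \cE_3$, and the forward direction handles the one problematic edge $\cE_1 \notin \mathcal{V}$ by a limiting argument from within $\mathcal{V}$. Your version is in fact slightly more explicit than the paper's, since you write down the decomposition coefficients $(q_{22}-q_{11},\, q_{11}-q_{12},\, q_{12})$ directly, whereas the paper just asserts the existence of nonnegative $\lambda_i$.
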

\begin{proof}  
  \textbf{($\Longleftarrow$)} Assume $\cE_i(\vx) \le \cE_i(\vy)$ for each $i=1, 2, 3$. Since $\mathcal{V} \subset \overline{\mathcal{V}}$, any form $Q \in \mathcal{V}$ can be written as a $\R_{\ge 0}$-linear combination of the edges, i.e. $Q = \lambda_1 \cE_1 + \lambda_2 \cE_2 + \lambda_3 \cE_3$ for $\lambda_i \in \R_{\ge 0}$. Therefore
\begin{equation*}
    \begin{split}
          Q(\vx) = \vx^T Q \vx &= \sum\limits_{j=1}^3 \lambda_j \vx^T \cE_j \vx  
                 \le \sum\limits_{j=1}^3 \lambda_j \vy^T \cE_j \vy = \vy^T Q \vy =  Q(\vy) \\
    \end{split}
\end{equation*}
which proves one direction.

 \textbf{($\Longrightarrow$)} Assume  $\vx \preceq \vy$ (i.e. $Q(\vx) \le Q(\vy)$ for all $Q \in \mathcal{V}$). First note that $\cE_2$ and $\cE_3$ are contained  in $\mathcal{V}$, so it suffices to prove that $\cE_1(\vx) \le \cE_1(\vy)$. Since $\cE_1$ lies on the boundary of $\mathcal{V}$, we can construct a sequence of forms $\cE^{(1)}, \cE^{(2)}, \cE^{(3)}, \ldots \in \mathcal{V}$ whose limit $\lim_{i \to \infty} Q^{(i)} = \cE_1$. Since each $Q^{(i)} (\vx) \le Q^{(i)}(\vy)$, we must have $\cE_1(\vx) \le \cE_1(\vy)$, as desired.   
\end{proof}

\begin{remark}[\textbf{Failure of Antisymmetry on $\Z^{\oplus 2}$}]
 Note that the relation $\preceq$ is not a partial ordering on $\ZZ^{\oplus 2}$, since $\vx \preceq -\vx$ and $-\vx \preceq \vx$, violating antisymmetry of a partial ordering. However, it turns out  the relation $\preceq$ is a partial ordering on the set $\ZZ^{\oplus 2}_{\tilde{*}}$ of strongly primitive vectors, as we show in the next lemma.     
\end{remark}

\smallskip
\begin{lemma}[\textbf{Partial Ordering on $\ZZ^{\oplus 2}_{\tilde{*}}$}]
    \label{lemma:preceq_partial_ordering_on_strongly_primitive_vectors}
    The relation $\preceq$ is a partial ordering on $\ZZ^{\oplus 2}_{\tilde{*}}$. 
\end{lemma}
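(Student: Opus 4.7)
The plan is to verify the three defining properties of a partial ordering: reflexivity, transitivity, and antisymmetry. Reflexivity is immediate: $Q(\vx) \le Q(\vx)$ holds for every $Q \in \mathcal{V}$. Transitivity is just as easy: if $Q(\vx) \le Q(\vy)$ and $Q(\vy) \le Q(\vz)$ for all $Q \in \mathcal{V}$, then $Q(\vx) \le Q(\vz)$ for all $Q \in \mathcal{V}$. Both of these steps use only the definition of $\preceq$ and do not require strong primitivity.

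The substantive step is antisymmetry, which is precisely where strong primitivity is needed (and where the remark preceding the lemma warns us that $\preceq$ fails to be a partial ordering on all of $\ZZ^{\oplus 2}$). Suppose $\vx \preceq \vy$ and $\vy \preceq \vx$ for $\vx, \vy \in \ZZ^{\oplus 2}_{\tilde{*}}$. Then $Q(\vx) = Q(\vy)$ for every $Q \in \mathcal{V}$. By \autoref{lemma:preceq_algo} applied in both directions (or equivalently by restricting to forms near the edges), this forces the equalities $\cE_i(\vx) = \cE_i(\vy)$ for $i = 1, 2, 3$. Writing $\vx = (x_1, y_1)$ and $\vy = (x_2, y_2)$, the three equalities $\cE_1$, $\cE_2$, $\cE_3$ unpack to
\[
y_1^2 = y_2^2, \qquad x_1^2 + y_1^2 = x_2^2 + y_2^2, \qquad x_1^2 + x_1 y_1 + y_1^2 = x_2^2 + x_2 y_2 + y_2^2,
\]
which simplify to $y_1^2 = y_2^2$, $x_1^2 = x_2^2$, and $x_1 y_1 = x_2 y_2$.

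From the first two, $x_1 = \pm x_2$ and $y_1 = \pm y_2$. The third equation then constrains the signs: if $\vx \ne \vy$, a short case check shows that either $\vy = -\vx$, or $\vy$ is obtained from $\vx$ by negating a coordinate that must itself be zero (because the remaining nonzero coordinate, through the relation $x_1 y_1 = x_2 y_2$, forces the flipped coordinate to vanish). In every surviving case, the last nonzero coordinate of $\vy$ is the negation of the last nonzero coordinate of $\vx$, which contradicts the strong primitivity condition that the last nonzero coordinate of a strongly primitive vector be positive. Hence $\vx = \vy$, completing antisymmetry.

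The only real obstacle is keeping the sign case analysis clean and explicitly invoking strong primitivity in each subcase rather than the weaker primitivity condition; the lemma would fail on merely primitive vectors since then $\vx$ and $-\vx$ are both allowed, and they satisfy $\vx \preceq -\vx$ and $-\vx \preceq \vx$ without being equal.
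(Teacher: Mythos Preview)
Your proof is correct and follows essentially the same approach as the paper: both verify reflexivity and transitivity directly from the definition, and for antisymmetry both invoke \autoref{lemma:preceq_algo} to reduce to the three equalities $\cE_i(\vx) = \cE_i(\vy)$, deduce that $\vy = \pm\vx$, and then use the sign convention in strong primitivity to exclude $\vy = -\vx$. Your case analysis in the antisymmetry step is phrased a bit differently (and slightly more verbosely) than the paper's, but the content is identical.
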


\begin{proof}
    We verify reflexivity, antisymmetry, and transitivity of $\preceq$ on $\ZZ^{\oplus 2}_{\tilde{*}}$. Below, we suppose $\vx \defeq (x_1, x_2), \vy \defeq (y_1, y_2), \vec{z} \defeq (z_1, z_2) \in \ZZ^{\oplus 2}_{\tilde{*}}$. 
    \begin{enumerate}
        \item[1)]  \textbf{Reflexivity}. $\vx \preceq \vx$ since $Q(\vx) = Q(\vx)$ for all $Q \in \mathcal{V}$. 

        \item[2)] \textbf{Antisymmetry}. Suppose $\vx \preceq \vy$ and $\vy \preceq \vx$, that implies $Q(\vx) \le Q(\vy)$ and $Q(\vy) \le Q(\vx)$ for all $Q \in \mathcal{V}$, which implies $Q(\vx) = Q(\vy)$ for all $Q \in \mathcal{V}$. By \autoref{lemma:preceq_algo}, this implies $\cE_i(\vx) = \cE_i(\vy)$ for each $i \in \{1,2,3\}$, giving 
            \[
            x_2^2 = y_2^2, \qquad  
            x_1^2 + x_2^2 = y_1^2 + y_2^2, \quad \mathrm{ and } \quad  
            x_1^2 + x_1x_2 + x_2^2 = y_1^2 + y_1 y_2 + y_2^2.\]
         From the first equation, we get $x_2 = \pm y_2$. From the second equation, we get $x_1 = \pm y_1$. From the third equation, we get $x_1x_2 = y_1y_2$ which implies that either $(x_1, x_2) = (y_1, y_2)$ or $(x_1, x_2) = (-y_1, -y_2)$. However since $(y_1, y_2)$ is strongly primitive, we know that $(-y_1, -y_2)$ cannot be strongly primitive, giving $\vx = \vy$ as desired. 

         \item[3)] \textbf{Transitivity}. This follows from \autoref{def:pre_order} since $Q(\vx) \le Q(\vy)$ and $Q(\vy) \le Q(\vec{z})$ for all $Q \in \mathcal{V}$ implies $Q(\vx) \le Q(\vec{z})$ for all $Q \in \mathcal{V}$, hence $\vx \preceq \vec{z}$.  
    \end{enumerate}
\end{proof}

\subsection{The Minimal Subset}

In this subsection we define the notion of a minimal subset, and give some useful properties.

\begin{definition}[\textbf{Minimal Subset $\minimum(X)$}]
\label{def:minimum}
    Suppose $X \subseteq \Z^{\oplus 2}$. Then we define the \textbf{minimal subset $\minimum(X)$ of $X$} as \[\minimum(X) \defeq \{\vx \in X \mid \: \vx \not\succeq \vy \emph{ for all } \vy \in X\setminus \{\vx\}\}.\]
\end{definition}

\medskip
Next we prove a technical lemma and give some useful pro{}perties of $\minimum(X)$ that will allow us to compute it in \autoref{sec:refinement_algo}.

\begin{lemma}
\label{lemma:x_a_finite}
    Suppose $X \subset \ZZ^{\oplus 2}$. Then for any $a \in \ZZ$, the set \[S \defeq \minimum(X) \intersect \{(x, y) \in \ZZ^{\oplus 2}\mid y = a\}\] is finite. 
\end{lemma}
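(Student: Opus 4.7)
The plan is to apply \autoref{lemma:preceq_algo} to convert the abstract relation $\preceq$, restricted to the horizontal slice $\{(x, a) \in \ZZ^{\oplus 2}\}$, into explicit one-variable inequalities, and then use any single element of $S$ as a witness that bounds the $x$-coordinates of all other elements of $S$.

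The first step is to observe that for any two vectors $(x_1, a), (x_2, a) \in \ZZ^{\oplus 2}$ sharing the $y$-coordinate $a$, the inequality $\cE_1((x_1, a)) \le \cE_1((x_2, a))$ demanded by \autoref{lemma:preceq_algo} reduces to $a^2 \le a^2$ and is automatic. Thus
\[
(x_1, a) \preceq (x_2, a) \;\;\iff\;\; x_1^2 \le x_2^2 \;\;\text{and}\;\; x_1^2 + x_1 a \le x_2^2 + x_2 a,
\]
where the two surviving conditions come from the $\cE_2$ and $\cE_3$ comparisons.

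The second step is the bounding argument. If $S$ is empty we are done; otherwise fix any $(x_1, a) \in S \subseteq X$. For every other $(x, a) \in S$, the condition $(x, a) \in \minimum(X)$ together with $(x_1, a) \in X \setminus \{(x, a)\}$ forces $(x, a) \not\succeq (x_1, a)$, i.e.\ $(x_1, a) \not\preceq (x, a)$. By the equivalence above, at least one of the strict inequalities $x^2 < x_1^2$ or $x^2 + xa < x_1^2 + x_1 a$ must hold. The first cuts out the finite set $\{x \in \ZZ : |x| < |x_1|\}$, and the second cuts out the integer points in a bounded interval, since the real parabola $x \mapsto x^2 + ax$ tends to $+\infty$ in both directions. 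Hence $S \setminus \{(x_1, a)\}$ is finite, and therefore so is $S$.

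The only delicate point is keeping the direction of $\preceq$ straight: the witness $(x_1, a) \in S$ dominates any other candidate $(x, a) \in S$ from below in the $\preceq$ order, ruling it out of $\minimum(X)$ unless one of the two bounding inequalities fails. The degenerate case $a = 0$ (in which both inequalities collapse to $x^2 < x_1^2$) is handled uniformly by the same argument, so I do not anticipate any substantive obstacle beyond this bookkeeping.
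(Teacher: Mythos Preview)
Your proof is correct. Both you and the paper reduce via \autoref{lemma:preceq_algo} to the two explicit inequalities $x_1^2 \le x_2^2$ and $x_1^2 + x_1 a \le x_2^2 + x_2 a$ on the horizontal slice $y = a$, so the core mechanism is the same. The packaging differs: the paper argues by contradiction, assuming $S$ is infinite, extracting two elements $(x_1, a), (x_2, a)$ of the same sign with $|x_1| + |x_2| > |a|$, and then verifying case by case that one precedes the other; you instead fix a single witness $(x_1, a) \in S$ and argue directly that every other $(x, a) \in S$ must fail at least one of the two comparisons, confining $x$ to the integer points of two bounded intervals. Your route avoids the sign-based case split and the auxiliary hypothesis $|x_1| + |x_2| > |a|$, at the cost of invoking the (trivial) fact that $x \mapsto x^2 + ax$ is proper. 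Either way the argument is a couple of lines once \autoref{lemma:preceq_algo} is in hand.
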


\begin{proof}
    Assume by way of contradiction that $S$ is an infinite set. Then there exist $x_1, x_2 \in \ZZ$ having the same sign so that $(x_1, a), (x_2, a) \in \minimum(X)$, and $|x_1| + |x_2| > |a|$. Without loss of generality we also assume that $x_1 \le x_2$. 

        \textbf{Case 1}: Assume $x_1, x_2 \ge 0$, then we show $(x_1, a) \preceq (x_2, a)$ by verifying the conditions in \autoref{lemma:preceq_algo}: \[\cE_1(x_1, a) \le \cE_1(x_2, a) \iff a^2 \le a^2,\] 
        \[ \cE_2(x_1, a) \le \cE_2(x_2, a) \iff x_1^2 + a^2 \le x_2^2 + a_2^2,\] which holds since  $0 \le x_1 \le x_2$, and 
        \[\cE_3(x_1, a) \le \cE_3(x_2, a) \iff x_1^2 + x_1a + a^2 \le x_2^2 + x_2a + a^2 \iff (a + x_1 + x_2)(x_1 - x_2) \le 0,\] which holds since $x_1 - x_2 \le 0$ and $a + x_1 + x_2 \ge 0$ (since by assumption $|x_1 | + |x_2 | > |a|$ and $x_1, x_2 \ge 0$). Thus $(x_1, a) \preceq (x_2, a) \in \minimum(X)$, which is a contradiction. 

         \textbf{Case 2}: Assume $x_1, x_2 \le 0$, then one can similarly show $(x_2, a) \preceq (x_1, a) \in \minimum(X)$, which is once again a contradiction. This completes our proof.       
\end{proof}

\begin{lemma}[\textbf{Properties of $\minimum(X)$}]
\label{lemma:min_properties}
Suppose $X \subset \Z^{\oplus 2}$. Then the following properties hold: 
\begin{enumerate}
    \item [1)] $\minimum(X)$ is a finite set.
    \item [2)] $\minimum(X) \neq \emptyset$  when $X \subseteq \ZZ^{\oplus 2}_{\tilde{*}}$ is non-empty. 
    \item [3)] If $W \subseteq \mathbb{Z}_{\tilde{*}}^{\oplus 2}$ and there exists some non-empty auxiliary set $Y_W$ satisfying both $ Y_W \subseteq X \subseteq \mathbb{Z}_{\tilde{*}}^{\oplus 2}$ and $W \supseteq\{\vx \in X \mid \vx \nsucceq \vy \emph{ for all } \vy \in Y_W\} \cup Y_W$, then  $\operatorname{MIN}(X)=\operatorname{MIN}(X \cap W)$.
    \item [4)] Suppose $(a, 1) \in X \subseteq \Z^{\oplus 2}_{\tilde{*}}$, and let $W_{0, a} \defeq \{\vx \in \Z^{\oplus 2}_{\tilde{*}} \mid x \not\succeq (a, 1)\} \union \{(a, 1)\}$. Then
    \subitem i) $\minimum(X) = \minimum(X \intersect W_{0, a})$, and
    \subitem ii) $W_{0, a} \subseteq \{\vx \in \Z^{\oplus 2}_{\tilde{*}} \mid \: \lVert\vx\rVert_{\infty} \le \sqrt{2(a^2 + \max(a, 0) + 1)}\}$, where $\lVert (x, y) \rVert_{\infty} \defeq \max \{|x|, |y|\}$.
\end{enumerate}
\end{lemma}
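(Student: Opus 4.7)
The plan is to handle the four parts in order, using \autoref{lemma:preceq_algo} throughout to translate the relation $\preceq$ into three componentwise inequalities on the triple $(\cE_1(\vx), \cE_2(\vx), \cE_3(\vx))$.

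For (1), I would push $X$ forward via $\Phi: \vx \mapsto (\cE_1(\vx), \cE_2(\vx), \cE_3(\vx)) \in \ZZ_{\ge 0}^3$, which has finite fibers since $(y^2, x^2, xy)$ determines $(x, y)$ up to sign changes of one or both coordinates. Distinct elements of $\minimum(X)$ are pairwise $\preceq$-incomparable (otherwise one lies below the other in $X$, violating minimality), so $\Phi(\minimum(X))$ is an antichain in $\ZZ_{\ge 0}^3$. Dickson's lemma bounds any such antichain, and combined with the finite fiber size this forces $\minimum(X)$ to be finite. For (2), given a non-empty $X \subseteq \ZZ^{\oplus 2}_{\tilde{*}}$, I would pick $\vx_0 \in X$ and restrict to $Y := \{\vx \in X \mid \vx \preceq \vx_0\}$, which by \autoref{lemma:preceq_algo} is contained in the finite lattice disk $\{\vx \in \ZZ^{\oplus 2} : \cE_2(\vx) \le \cE_2(\vx_0)\}$. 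By \autoref{lemma:preceq_partial_ordering_on_strongly_primitive_vectors}, $\preceq$ is a genuine partial order on $Y$, so this finite non-empty poset has a minimal element $\vx_1$; transitivity then promotes $\vx_1$ to a minimum in $X$, since any $\vy \preceq \vx_1$ in $X$ also lies in $Y$ and hence equals $\vx_1$.

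For (3), I would first verify $\minimum(X) \subseteq W$: if $\vx \in \minimum(X) \setminus Y_W$, then no $\vy \in Y_W \subseteq X$ satisfies $\vy \preceq \vx$ (such a $\vy$ would contradict minimality of $\vx$), placing $\vx$ in the explicit subset of $W$ named in the hypothesis. The inclusion $\minimum(X) \subseteq \minimum(X \cap W)$ is then immediate. Conversely, suppose $\vx \in \minimum(X \cap W)$ but some $\vy \in X \setminus \{\vx\}$ satisfies $\vy \preceq \vx$. If $\vy \in W$, this directly contradicts minimality of $\vx$ in $X \cap W$. Otherwise $\vy \notin W$ forces $\vy \succeq \vy'$ for some $\vy' \in Y_W \subseteq X \cap W$; a short antisymmetry check on $\ZZ^{\oplus 2}_{\tilde{*}}$ rules out the case $\vy' = \vx$ (it would entail $\vy = \vx$), leaving $\vy' \preceq \vx$ with $\vy' \in (X \cap W) \setminus \{\vx\}$ as the required contradiction.

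For (4i), apply (3) with $Y_W := \{(a, 1)\}$: the hypothesis is satisfied because $(a, 1) \in X$, and $W_{0, a}$ by construction contains both $(a, 1)$ and every strongly primitive $\vx$ with $\vx \nsucceq (a, 1)$. For (4ii), fix $\vx \in W_{0, a}$. Either $\vx = (a, 1)$ (trivially within the bound) or $\vx \nsucceq (a, 1)$, in which case \autoref{lemma:preceq_algo} yields $\cE_i(\vx) < \cE_i(a, 1)$ for some $i \in \{1, 2, 3\}$; a case split using $\cE_1(a, 1) = 1$, $\cE_2(a, 1) = a^2 + 1$, $\cE_3(a, 1) = a^2 + a + 1$, together with the elementary inequality $x^2 + y^2 \le 2(x^2 + xy + y^2)$ for the $\cE_3$ case, gives $\lVert \vx \rVert_\infty^2 \le 2(a^2 + \max(a, 0) + 1)$ uniformly, with $\max(a, 0)$ appearing precisely to absorb the sign of $a$ in the $\cE_3$ estimate. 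The main obstacle will be the bookkeeping in (3), particularly the sub-case $\vy \notin W$ where one must carefully untangle the defining condition for $W$ and invoke antisymmetry on $\ZZ^{\oplus 2}_{\tilde{*}}$; once (3) is in hand, (1), (2), and (4) follow with only mechanical estimation.
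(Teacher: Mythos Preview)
Your proposal is correct. All four parts go through as outlined, and the bookkeeping in (3) is sound: the subcase $\vy \notin W$ does indeed produce some $\vy' \in Y_W$ with $\vy' \preceq \vy \preceq \vx$, and antisymmetry on $\ZZ^{\oplus 2}_{\tilde{*}}$ disposes of $\vy' = \vx$ exactly as you say. The estimates in (4ii) are also fine; the inequality $x^2 + y^2 \le 2(x^2 + xy + y^2) = (x+y)^2 + x^2 + y^2 - (x^2+y^2)$, i.e.\ $(x+y)^2 \ge 0$, handles the $\cE_3$ case, and the $\max(a,0)$ correctly absorbs the sign of $a$ since $a^2 + a + 1 \le a^2 + \max(a,0) + 1$ always.

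Your route differs from the paper's in two places. For (1), the paper argues by contradiction via the auxiliary \autoref{lemma:x_a_finite}: assuming $\minimum(X)$ is infinite, it extracts an infinite subset on which $\cE_2$ and $\cE_3$ are bounded below by a fixed value, forces the second coordinate to be bounded, and then invokes \autoref{lemma:x_a_finite} to rule out infinitely many points on a horizontal line. Your Dickson's lemma argument is cleaner and more conceptual: the map $\Phi$ carries the $\preceq$-antichain $\minimum(X)$ injectively (by the same incomparability you note) into an antichain of $\ZZ_{\ge 0}^3$, and Dickson finishes at once. For (2), the paper builds an infinite strictly descending $\preceq$-chain and derives a contradiction from decreasing $\cE_2$-values; you instead localize to the finite set $\{\vx \in X : \vx \preceq \vx_0\}$ and pick a minimal element directly, which is a bit more constructive. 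For (3), the paper simply cites an external reference, whereas you supply a self-contained argument. Part (4) matches the paper's approach.
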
 

\begin{proof}
        \textbf{Proof of 1)}: Assume otherwise $\minimum(X)$ is an infinite set. Choose some arbitrary $(x_0, y_0) \in \minimum(X)$. Since $\cE_2$ is a positive-definite form, there exists an infinite subset $S \subseteq \minimum(X)$ so that for any $(x, y) \in S$, we have $\cE_2(x, y) \ge \cE_2(x_0, y_0)$. Similarly, since $\cE_3$ is a positive-definite form, there exists an infinite subset $S' \subseteq S \subseteq \minimum(X)$ so that for any $(x, y) \in S'$, we have $\cE_3(x, y) \ge \cE_3(x_0, y_0)$. If there is a point $(x, y) \in S'$ so that $y^2 \ge y_1^2$, then we have a contradiction since  $(x_0, y_0) \preceq (x, y)$ (by \autoref{lemma:preceq_algo}) and so $(x, y)$ cannot be in $\minimum(X)$. Thus for all $(x, y) \in S'$, we have $y^2 < y_1^2 \implies y < |y_1|$, which implies that $y$ is bounded. But since $S'$ is infinite, there must be some coordinate $a \in \ZZ$ so that there are infinitely many $x$ for which $(x, a) \in S' \subseteq \minimum(X)$, which by \autoref{lemma:x_a_finite} is a contradiction.  

        \smallskip
        \textbf{Proof of 2)}: Assume otherwise. Then for any $\vx \in X \subseteq \ZZ^{\oplus 2}_{\tilde{*}}$, we can find $\vx' \in X\setminus \{\vx\}$ such that $\vx' \preceq \vx$. This lets us construct an infinitely descending chain $\vx_1 \succeq \vx_2 \succeq \ldots$. 

         \quad By antisymmetry of the partial order on the set of strongly primitive vectors, no $\vx$ appears twice in this chain. But by \autoref{lemma:preceq_algo}, this implies that the lengths of the vectors in this chain are also decreasing, but this cannot happen forever since there are finitely many vectors of each length, a contradiction.  
        
        \smallskip
        \textbf{Proof of 3)}:  The proof in \cite[\S5.4, Lemma 5.4.4, p46]{rydell2020three} can be used with only one minor modification, which is that we are working with strongly primitive vectors of dimension $2$ instead of dimension $3$.   
        
        \smallskip
        \textbf{Proof of 4)}:  Part 4i) follows from part 3) by letting $Y_W \defeq \{(a, 1)\}$. For part 4ii), suppose we have $|x|, |y| > \sqrt{2(a^2 + \max(a, 0) + 1)}$, then we can verify that $(x, y) \succeq (a, 1)$ by applying \autoref{lemma:preceq_algo}, and thus $(x, y) \not\in W_{0,a}$.
\end{proof}

\begin{remark}[\textbf{Computing $\minimum$}]
\label{rem:compute_min}
     For a finite set $X \subseteq \Z^{\oplus 2}$, we can use part 4i) of \autoref{lemma:min_properties} to compute $\minimum(\Z^{\oplus 2}_{\tilde{*}}\setminus X)$ since there always exists some vector $(a, 1) \in \Z^{\oplus 2}_{\tilde{*}}\setminus X$. Here, the computation is simpler because $W_{0, a}$ is finite by part 4ii).  
\end{remark}

Next we observe that $\minimum(X)$ satisfies the desirable property that any $\text{GL}_2(\ZZ)$-reduced positive-definite binary quadratic form $Q$ achieves its minimum over $X$ at some element of $\minimum(X)$. 

\begin{lemma}[\textbf{$\minimum$ attains Minimum Values}]
\label{lemma:min_alt_def}
Suppose $X \subseteq \Z^{\oplus 2}$. Then for any $Q \in \mathcal{V}$, we have
\[\min (Q(\minimum(X))) = \min(Q(X)).\]
\end{lemma}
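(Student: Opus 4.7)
The plan is to prove the two inequalities separately. The direction $\min(Q(\minimum(X))) \ge \min(Q(X))$ is immediate from the inclusion $\minimum(X) \subseteq X$, which gives $Q(\minimum(X)) \subseteq Q(X)$ and hence the stated inequality between their infima.

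For the reverse inequality, I will show that every $\vx \in X$ admits a witness $\vy^{*} \in \minimum(X)$ with $\vy^{*} \preceq \vx$: since $Q \in \mathcal{V}$, the defining property of $\preceq$ then forces $Q(\vy^{*}) \le Q(\vx)$. Applied to any $\vx \in X$ attaining $\min(Q(X))$, this produces an element of $\minimum(X)$ whose $Q$-value is at most $\min(Q(X))$, closing the desired inequality.

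To construct $\vy^{*}$, I fix $\vx \in X$ and consider the truncated set $F \defeq \{\vec{z} \in X \mid \vec{z} \preceq \vx\}$. It is non-empty because $\vx \in F$ (by reflexivity of $\preceq$), and it is finite because $\vec{z} \preceq \vx$ forces $\cE_2(\vec{z}) \le \cE_2(\vx)$ by \autoref{lemma:preceq_algo}, confining $\vec{z}$ to a fixed $\cE_2$-ball which contains only finitely many integer vectors. By \autoref{lemma:preceq_partial_ordering_on_strongly_primitive_vectors}, $\preceq$ is a genuine partial order on $\Z^{\oplus 2}_{\tilde{*}}$, so the finite non-empty sub-poset $F$ admits a $\preceq$-minimal element $\vy^{*}$.

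Finally, I upgrade minimality in $F$ to membership in $\minimum(X)$. If there were some $\vec{w} \in X \setminus \{\vy^{*}\}$ with $\vec{w} \preceq \vy^{*}$, transitivity of $\preceq$ would give $\vec{w} \preceq \vx$, placing $\vec{w} \in F$; the antisymmetry of $\preceq$ on strongly primitive vectors then forces $\vy^{*} \not\preceq \vec{w}$, so $\vec{w}$ lies strictly below $\vy^{*}$ in $F$, contradicting its minimality. The main obstacle is precisely this upgrade from ``$\preceq$-minimal in $F$'' to ``$\preceq$-minimal in all of $X$'', which relies on transitivity of $\preceq$ together with the antisymmetry established in \autoref{lemma:preceq_partial_ordering_on_strongly_primitive_vectors}; once these are in place, the rest of the argument is bookkeeping.
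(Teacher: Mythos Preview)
Your approach is genuinely different from the paper's. The paper argues by contradiction: it supposes the minimum of $Q$ over $X$ is attained at some $\vy \notin \minimum(X)$ with $Q(\vy)$ strictly smaller than $Q(\vx)$ for every $\vx \in \minimum(X)$, and then asserts (rather tersely) that such a $\vy$ would have to lie in $\minimum(X)$. Your route is more constructive: you produce, for each $\vx \in X$, a dominating witness $\vy^{*} \in \minimum(X)$ with $\vy^{*} \preceq \vx$, by descending inside the finite truncation $F$. This is cleaner and makes the role of the order structure explicit.

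There is, however, a genuine gap. The stated hypothesis is $X \subseteq \Z^{\oplus 2}$, not $X \subseteq \Z^{\oplus 2}_{\tilde{*}}$, yet you twice invoke \autoref{lemma:preceq_partial_ordering_on_strongly_primitive_vectors}: first to guarantee that the finite set $F$ has a $\preceq$-minimal element, and again in the last paragraph to conclude $\vy^{*} \not\preceq \vec{w}$ from $\vec{w} \neq \vy^{*}$ and $\vec{w} \preceq \vy^{*}$. Both steps need antisymmetry of $\preceq$, which that lemma establishes only on $\Z^{\oplus 2}_{\tilde{*}}$. On general $\Z^{\oplus 2}$ antisymmetry fails (e.g.\ $(1,0) \preceq (-1,0) \preceq (1,0)$), so a finite $F$ need not have an element minimal in the sense you require; indeed for $X = \{(1,0),(-1,0)\}$ one has $\minimum(X) = \emptyset$ and the lemma's left-hand side is not even defined. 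The repair is simply to add the hypothesis $X \subseteq \Z^{\oplus 2}_{\tilde{*}}$ --- which is the only setting in which the paper ever applies this lemma --- after which your argument goes through without change.
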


\begin{proof}
Suppose otherwise that there exists some $Q \in \mathcal{V}, \vy \not\in \minimum(X)$ so that $Q(\vy) \le Q(\vx)$ for all $\vx \in X$, and $Q(\vy) < Q(\vx)$ for all $\vx \in \minimum(X)$. But then $\vy$ is a vector such that $\vx \not\preceq \vy$ for all $\vx \in X$, and thus by definition, should be in $\minimum(X)$. This is a contradiction.  
\end{proof}

We now define the different but related notion of successive minima. For two singleton sets $A = \{a\}, B = \{b\}$, we use the notation $A \le B$ to mean $a \le b$.   

\begin{definition}[\textbf{Successive Minima Sequence}]
\label{def:succ_min_set}
Suppose $Q$ is a positive-definite binary quadratic form, and $(X_i)_{i=1}^{\infty}$ is a sequence of finite sets $X_i \subseteq \Z^{\oplus 2}_{\tilde{*}}$.  We define $(X_i)_{i=1}^{\infty}$ to be a \textbf{successive minima sequence} of $Q$ if and only if the following conditions hold:
\begin{enumerate}
    \item[1)] For all $\vx \in \Z^{\oplus 2}_{\tilde{*}}$, there exists a unique $i$ so that $\vx \in X_i$.
    \item[2)] For all $i$, there exists an $m \in \R_{>0}$ so that  $Q(X_i) \in \{\emptyset, \{m\}\}$.
    \item[3)] For all $i < j$ such that $X_i$ and $X_j$ are non-empty, we have $Q(X_i) \le Q(X_j)$.
\end{enumerate}   
\end{definition}

\begin{example}
    Let $Q: \Z^{\oplus 2}_{\tilde{*}} \to \Z_{\ge 0}$ be a quadratic form defined by $Q(x,y) \defeq x^2 + y^2$. Then consider the sequence $(X_i)_{i=1}^{\infty}$ given by $X_i \defeq Q^{-1}(\{i\})$. Every strongly primitive vector appears in exactly one $X_i$, each set maps under $Q$ to the squared euclidean norm of the vectors in the set (and the norm is constant on each set), and the sets are arranged in increasing order of the norm. Therefore $(X_i)_{i=1}^{\infty}$ is  a successive minima sequence of $Q$. 
\end{example}

The following lemma gives us a way to construct a successive minima sequence that satisfies a desired property under $\minimum$ of subsequences. 

\begin{lemma}[\textbf{$\minimum$ Successive Minima Sequences}]
\label{lemma:succ_min}
Suppose $Q \in \mathcal{V}$ is a $\mathrm{GL}_2(\ZZ)$-reduced  form. Then there exists a sequence of vectors $(\vx_i)_{i=1}^{\infty}$ so that $(\{\vx_i\})_{i=1}^{\infty}$ is a successive minima sequence of $Q$  , and \[\vx_{i+1} \in \minimum(\Z^{\oplus 2}_{\tilde{*}} \setminus \{\vx_1, \ldots, \vx_i\}) \emph{ for } i \ge 0. \] When $i = 0$, $\vx_1 \in \minimum(\Z^{\oplus 2}_{\tilde{*}})$.  
\end{lemma}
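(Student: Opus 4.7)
\smallskip
\noindent
\textbf{Proof proposal.} The plan is to construct $(\vx_i)_{i \ge 1}$ by straightforward induction, selecting at each stage an element of $\minimum$ on which $Q$ attains its minimum value over the remaining strongly primitive vectors, and then to verify the three defining properties of a successive minima sequence.

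First I would set up the recursion. Put $S_0 \defeq \Z^{\oplus 2}_{\tilde{*}}$ and, assuming $\vx_1, \ldots, \vx_i$ have been chosen, let $S_i \defeq \Z^{\oplus 2}_{\tilde{*}} \setminus \{\vx_1, \ldots, \vx_i\}$. Since $S_i$ is a non-empty subset of $\Z^{\oplus 2}_{\tilde{*}}$, part 2) of \autoref{lemma:min_properties} guarantees that $\minimum(S_i)$ is non-empty, and part 1) guarantees it is finite. Hence I may define
\[
\vx_{i+1} \in \arg\min \bigl\{ Q(\vy) : \vy \in \minimum(S_i) \bigr\},
\]
which is well-defined as a minimum over a finite non-empty set (ties are broken arbitrarily). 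By \autoref{lemma:min_alt_def}, this value equals $\min Q(S_i)$, so $\vx_{i+1}$ also minimizes $Q$ over all of $S_i$. This immediately gives the statement $\vx_{i+1} \in \minimum(\Z^{\oplus 2}_{\tilde{*}} \setminus \{\vx_1,\ldots,\vx_i\})$ required in the conclusion.

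Next I would verify the three conditions of \autoref{def:succ_min_set} for the sequence $(\{\vx_i\})_{i=1}^\infty$. Condition 2) is immediate: $Q(\{\vx_i\}) = \{Q(\vx_i)\}$ is a singleton, and since $Q \in \mathcal{V}$ is positive-definite (because $\mathcal{V}$ consists of positive-definite forms) and $\vx_i \neq \vec{0}$, we have $Q(\vx_i) \in \R_{>0}$. Condition 3) follows from the nested inclusion $S_{i+1} \subset S_i$: since $\vx_{i+2} \in S_{i+1} \subseteq S_i$ and $\vx_{i+1}$ minimizes $Q$ on $S_i$, we get $Q(\vx_{i+1}) \le Q(\vx_{i+2})$.

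The part that needs a small argument is condition 1), namely that every strongly primitive vector appears (necessarily uniquely, since we remove $\vx_i$ from consideration at step $i$). This is the main obstacle, but it is handled by exploiting positive-definiteness of $Q$. Fix $\vy \in \Z^{\oplus 2}_{\tilde{*}}$ and let $M \defeq Q(\vy)$. The set
\[
T_{\vy} \defeq \bigl\{ \vec{z} \in \Z^{\oplus 2}_{\tilde{*}} : Q(\vec{z}) \le M \bigr\}
\]
is finite, because the level sets of a positive-definite quadratic form on $\R^2$ are bounded and hence contain only finitely many lattice points. If $\vy$ were never chosen, then $\vy \in S_i$ for every $i$, so $Q(\vx_{i+1}) \le Q(\vy) = M$ for all $i \ge 0$; this would produce infinitely many distinct elements of $T_{\vy}$, a contradiction. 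Therefore $\vy = \vx_j$ for some finite $j$, completing the verification.
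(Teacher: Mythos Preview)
Your proof is correct and follows essentially the same approach as the paper: construct $\vx_{i+1}$ inductively as a minimizer of $Q$ over $\minimum(S_i)$ (invoking \autoref{lemma:min_alt_def} to identify this with the minimum over all of $S_i$), then use positive-definiteness of $Q$ to force every strongly primitive vector to eventually appear. Your write-up is in fact slightly more careful than the paper's in explicitly citing parts 1) and 2) of \autoref{lemma:min_properties} to justify that the $\arg\min$ is over a non-empty finite set.
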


\begin{proof}
We define the sequence inductively. By \autoref{lemma:min_alt_def}, there exists $\vx_1 \in \minimum(\Z^{\oplus 2}_{\tilde{*}})$ so that $Q(\vx_1) = \min Q(\Z^{\oplus 2}_{\tilde{*}})$, i.e. $Q(\vx_1) \le Q(\vx)$ for all $\vx \in \Z^{\oplus 2}_{\tilde{*}}$. Suppose we have constructed the sequence up to $\vx_1, \ldots, \vx_i$. Then by yet another application of \autoref{lemma:min_alt_def}, we can find $\vx_{i+1} \in \minimum(\Z^{\oplus 2}_{\tilde{*}} \setminus \{\vx_1, \ldots, \vx_i\})$ so that $Q(\vx_{i+1}) = \min Q(\Z^{\oplus 2}_{\tilde{*}} \setminus \{\vx_1, \ldots, \vx_i\})$.   

We prove that this is a successive minima sequence. At each step, we are choosing the vector that minimizes $Q$ on the vectors that have not been chosen yet. This implies that the $Q(x_i)$ are increasing. So it suffices to show that every vector in $\Z^{\oplus 2}_{\tilde{*}}$ appears in the sequence. 

Suppose some vector $\vy \in \Z^{\oplus 2}_{\tilde{*}}$ does not appear in the sequence. Since $Q$ is positive-definite, there are finitely many $\Z^{\oplus 2}_{\tilde{*}}$-solutions to the equation $Q(\vx) = m$ for any $m \in \Z_{>0}$. Thus there exists an index $i$ such that $Q(\vx_i) \le Q(\vy) < Q(\vx_{i+1})$. By construction, $Q(\vx_{i+1}) = \min Q(\Z^{\oplus 2}_{\tilde{*}} \setminus \{\vx_1, \ldots, \vx_i\})$ but $\vy \in \Z^{\oplus 2}_{\tilde{*}} \setminus \{\vx_1, \ldots, \vx_i\}$ and $Q(\vy) < Q(\vx_{i+1})$, a contradiction.

Thus the constructed sequence is a successive minima sequence.  
\end{proof}

Let $n \in \ZZ_{\ge 0}$. For the extended refinement algorithm in \autoref{sec:refinement_algo}, we need to iterate over all possibilities for the next $n$ minimal vectors. In other words, we want to find vectors $\vx_1, \ldots, \vx_n$ so that $\vx_i \in \minimum(X \setminus \{\vx_1, \ldots, \vx_{i-1}\})$ for all $i$. This motivates us to define the useful generalization $\minimum_n(X)$ as the set of all such possible choices (up to permutation). Formally, 

\medskip
\begin{definition}[\textbf{$\mathbf{n}^{\text{th}}$-order Minimal Subset $\minimum_n(X)$}]
\label{def:nth_order_minimal_subset}
    Suppose $X \subseteq \Z^{\oplus 2}$ and $n \in \ZZ_{\ge 1}$. Then we define the \textbf{$\mathbf{n}^{\emph{th}}$-order minimal subset}  $\minimum_n(X)$ \textbf{of $X$} by

    \[
  \minimum_n(X) \defeq \left\{ \mathcal{X} \subseteq \ZZ^{\oplus 2}_{\tilde{*}}\ \middle\vert \begin{array}{l}
    \#\mathcal{X} = n \text{ and for some ordering } \mathcal{X} \defeq \{\vx_1, \ldots, \vx_n\}, \text{ we have }  \\ 
              \quad \vx_{i+1} \in \minimum(X \setminus \{\vx_1, \ldots, \vx_{i}\})  \quad \text{ for each } \quad 0 \le i \le n-1
  \end{array}\right\}.
\]
\end{definition}

\begin{remark}[\textbf{$\minimum_1$ as $\minimum$}]
For $n = 1$ the notion of $\minimum_n(X)$ and $\minimum(X)$ coincide up to one level of unpacking (i.e., if $\minimum(X) = \{\vx_1, \ldots, \vx_{\# \minimum(X)}\}$ then $\minimum_1(X) = \{\{\vx_1\}, \ldots, \{\vx_{\# \minimum(X)}\}\}$).
\end{remark}

\begin{remark}[\textbf{Computing $\minimum_n$}]
\label{rem:computing_min_n}
    We can compute $\minimum_n(X)$ recursively by first computing $\minimum_{n-1}(X)$, then computing  $M_{n} \defeq \minimum_1(X \setminus \{\vx_1, \ldots, \vx_{n-1}\})$ (as in \autoref{rem:compute_min})  for each $\{\vx_1, \ldots, \vx_{n-1}\} \in \minimum_{n-1}(X)$, and setting $\vx_n \defeq \vx$ for each $\{\vx\} \in M_n$.
\end{remark}

The following corollary of \autoref{lemma:succ_min} follows directly from the definition of $\minimum_n$.
\begin{corollary}[\textbf{$\minimum_n$ Successive Minima Sequences}]
\label{corollary:succ_min_n}
    Suppose $(\vx_i)_{i=1}^{\infty}$ is a sequence satisfying the properties in \autoref{lemma:succ_min}. Then for all $m, n \in \Z_{\ge 0}$, we have \[\{\vx_{m+1}, \ldots, \vx_{m+n}\} \in \minimum_n(\Z^{\oplus 2}_{\tilde{*}} \setminus \{\vx_1, \ldots, \vx_m\}).\]
\end{corollary}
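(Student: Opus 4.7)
The plan is to verify the corollary directly by unpacking the definition of $\minimum_n$ in \autoref{def:nth_order_minimal_subset}. Given the sequence $(\vx_i)_{i=1}^{\infty}$ from \autoref{lemma:succ_min}, I choose the natural ordering $\vx_{m+1}, \vx_{m+2}, \ldots, \vx_{m+n}$ on the set $\mathcal{X} \defeq \{\vx_{m+1}, \ldots, \vx_{m+n}\}$. Since all the $\vx_i$ are distinct (they come from a successive minima sequence, so each appears in exactly one $X_i$), we have $\#\mathcal{X} = n$, so the cardinality condition in \autoref{def:nth_order_minimal_subset} is satisfied.

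Next, for each $0 \le i \le n-1$, I need to verify the membership condition
\[
\vx_{m+i+1} \in \minimum\Bigl(\bigl(\Z^{\oplus 2}_{\tilde{*}} \setminus \{\vx_1, \ldots, \vx_m\}\bigr) \setminus \{\vx_{m+1}, \ldots, \vx_{m+i}\}\Bigr).
\]
The key observation is that since the $\vx_j$ are distinct, the set inside $\minimum$ collapses to
\[
\Z^{\oplus 2}_{\tilde{*}} \setminus \{\vx_1, \ldots, \vx_{m+i}\}.
\]
Then the defining property of the successive minima sequence from \autoref{lemma:succ_min}, applied with the index $j \defeq m + i \ge 0$, yields precisely
\[
\vx_{m+i+1} = \vx_{j+1} \in \minimum\bigl(\Z^{\oplus 2}_{\tilde{*}} \setminus \{\vx_1, \ldots, \vx_j\}\bigr),
\]
which is what we need.

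There is essentially no obstacle here; the result follows immediately once the correct ordering is chosen and the nested set difference is simplified. The only subtlety worth flagging is the distinctness of the $\vx_i$, which is what allows the two set-difference operations to merge into a single one, but this is guaranteed by condition 1) in \autoref{def:succ_min_set} (each strongly primitive vector appears in exactly one singleton $\{\vx_i\}$ of the sequence).
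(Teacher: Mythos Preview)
Your proof is correct and is exactly what the paper has in mind: the paper states that this corollary ``follows directly from the definition of $\minimum_n$'' without writing out any further argument, and your unpacking of \autoref{def:nth_order_minimal_subset} together with the defining property $\vx_{j+1} \in \minimum(\Z^{\oplus 2}_{\tilde{*}} \setminus \{\vx_1, \ldots, \vx_j\})$ from \autoref{lemma:succ_min} is precisely that direct verification. One incidental remark: the set-difference simplification $(A \setminus B) \setminus C = A \setminus (B \cup C)$ does not actually require distinctness of the $\vx_j$; distinctness is only needed for the cardinality condition $\#\mathcal{X} = n$, which you correctly flag.
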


\section{The \texorpdfstring{$\kset$}{K}-set \texorpdfstring{$\kset(X; X_1, \ldots, X_n)$}{K(X, x1, ..., xn)}}

In the previous section we defined a successive minima sequence with respect to a quadratic form $Q$. In this section we flip the question to ask, ``Can we find all quadratic forms $Q \in \overline{\mathcal{V}}$ which have a given successive minima sequence?''. The $\kset$-set is the answer to this question.  

\medskip

Given a sequence $X_1, \ldots, X_n$, we define the $\kset$-set of this sequence as the set of all quadratic forms $Q$ so that $X_1, \ldots, X_n$ is a (truncated) successive minima sequence of $Q$. Formally,

\begin{definition}[\textbf{$\kset$-set} $\kset(X; X_1, \ldots, X_k)$]
\label{def:kset_set}
    Suppose $X \subseteq \Z^{\oplus 2}_{\tilde{*}}$, and $X_1, \ldots, X_k \subseteq X$ are finite pairwise-disjoint sets. We define the \textbf{$\kset$-set} $\kset(X; X_1, \ldots, X_k)$ by
    \[\kset(X; X_1, \ldots, X_k) \defeq \left\{ Q \in \overline{\mathcal{V}}\ \middle\vert \begin{array}{l}
    \emph{for each } 1 \le i \le k \emph{ we have } Q(X_i) \in \{\emptyset, \{m_i\}\}  \\
     \emph{where } m_i \defeq \min (Q(X \setminus (X_1 \union \ldots \union X_{i-1})))
  \end{array}\right\}.\]
\end{definition}

We show that the $\kset$-set is a polyhedral cone, and provide explicit inequalities required to algorithmically compute the $\kset$-set. First, we prove a special case.  

\begin{lemma}[\textbf{Singleton $\kset$-set Polyhedral Cone}]
\label{lemma:Kset_schiemann}
    Suppose $\vx_1, \ldots, \vx_k \in X \subseteq \Z^{\oplus 2}_{\tilde{*}}$ is a sequence of strongly primitive vectors in $X$. Then $\kset(X; \{\vx_1\}, \ldots, \{\vx_k\})$ is a polyhedral cone, explicitly given by 
    \[\kset(X; \{\vx_1\}, \ldots, \{\vx_k\}) = \left\{ Q \in \overline{\mathcal{V}}\ \middle\vert \begin{array}{l}
    \quad Q(\vx_1) \le \ldots \le Q(\vx_k) \le Q(\vx) \\ 
    \emph{for all } \vx \in \minimum(X \setminus \{\vx_1, \ldots, \vx_k\})
  \end{array}\right\}.\]
\end{lemma}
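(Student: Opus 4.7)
The plan is to establish the set equality in the displayed formula; once this is done, the polyhedral cone claim is essentially automatic, since $\overline{\mathcal{V}}$ is a polyhedral cone, each condition $Q(\vec{u}) \le Q(\vec{v})$ translates to a linear inequality in the three coordinates $(q_{11}, q_{22}, q_{12})$ of $Q$, and the right-hand side involves only finitely many such inequalities because $\minimum(X \setminus \{\vx_1, \ldots, \vx_k\})$ is finite by part 1 of \autoref{lemma:min_properties}.

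For the forward inclusion ($\subseteq$), I would unwind \autoref{def:kset_set} with $X_i = \{\vx_i\}$ to obtain $Q(\vx_i) = m_i = \min Q(X \setminus \{\vx_1, \ldots, \vx_{i-1}\})$ for each $i$. Since $X \setminus \{\vx_1, \ldots, \vx_i\} \subseteq X \setminus \{\vx_1, \ldots, \vx_{i-1}\}$, the minima satisfy $m_i \le m_{i+1}$, giving the chain $Q(\vx_1) \le \cdots \le Q(\vx_k)$. Because $\minimum(X \setminus \{\vx_1, \ldots, \vx_k\}) \subseteq X \setminus \{\vx_1, \ldots, \vx_k\} \subseteq X \setminus \{\vx_1, \ldots, \vx_{k-1}\}$, every such $\vx$ satisfies $Q(\vx) \ge m_k = Q(\vx_k)$, completing this direction.

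For the reverse inclusion ($\supseteq$), the main trick is to promote the finitely many inequalities $Q(\vx_k) \le Q(\vx)$ over $\minimum(X \setminus \{\vx_1, \ldots, \vx_k\})$ into the infinitely many inequalities needed to realize $Q(\vx_i) = m_i$. Here I would invoke \autoref{lemma:min_alt_def}: since $Q \in \overline{\mathcal{V}}$, we have $\min Q(\minimum(Y)) = \min Q(Y)$ for $Y = X \setminus \{\vx_1, \ldots, \vx_k\}$, which upgrades the hypothesis to $Q(\vx_k) \le Q(\vx)$ for all $\vx \in X \setminus \{\vx_1, \ldots, \vx_k\}$. Then for any fixed $i$, I split an arbitrary $\vx \in X \setminus \{\vx_1, \ldots, \vx_{i-1}\}$ into two cases: either $\vx = \vx_j$ with $i \le j \le k$, in which case $Q(\vx_j) \ge Q(\vx_i)$ follows from the chain; or $\vx \in X \setminus \{\vx_1, \ldots, \vx_k\}$, in which case $Q(\vx) \ge Q(\vx_k) \ge Q(\vx_i)$. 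This shows $m_i \ge Q(\vx_i)$, while $\vx_i \in X \setminus \{\vx_1, \ldots, \vx_{i-1}\}$ gives the reverse inequality $m_i \le Q(\vx_i)$, yielding $Q(\vx_i) = m_i$ as required.

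The only place where real content enters is the application of \autoref{lemma:min_alt_def} in the reverse direction, which handles the passage from an infinite constraint set to a finite one — the rest of the argument is combinatorial bookkeeping using the chain of inequalities. I do not anticipate any serious obstacle: the subtlest point to verify carefully is that the min $m_i$ is actually attained so the definition of the $\kset$-set makes sense for all $Q \in \overline{\mathcal{V}}$ under consideration, which again follows from \autoref{lemma:min_alt_def} together with the finiteness in \autoref{lemma:min_properties}.
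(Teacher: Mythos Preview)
Your argument is correct and complete. The paper, however, takes a very different route: it simply cites Schiemann's thesis, stating that the lemma ``is a rephrasing of \cite[\S5.1.1, Lemma 5.1.11, p39]{schiemann1997ternary}'' with singletons $\{\vx_i\}$ in place of $\vx_i$, and gives no further argument. So you have supplied a self-contained proof where the paper defers entirely to an external reference; what you gain is independence from Schiemann's text, at the cost of writing out the bookkeeping explicitly.

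One small point worth tightening: \autoref{lemma:min_alt_def} is stated only for $Q \in \mathcal{V}$, whereas in the reverse inclusion you apply it to $Q \in \overline{\mathcal{V}}$. The extension is immediate --- the proof of \autoref{lemma:min_alt_def} only uses that $Q(\vy) < Q(\vx)$ implies $\vx \not\preceq \vy$, and by the continuity argument in the proof of \autoref{lemma:preceq_algo} the relation $\preceq$ is already controlled by all $Q \in \overline{\mathcal{V}}$ --- but you should say a word to justify it.
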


\begin{proof}
This is a rephrasing of \cite[\S5.1.1, Lemma 5.1.11, p39]{schiemann1997ternary}, replacing $\vx_i$ by $\{\vx_i\}$ for all $1 \le i \le k$. 
\end{proof}

We now use this to prove the general case. 

\begin{lemma}[\textbf{General $\kset$-set Polyhedral Cone}]
\label{lemma:Kset}
    Suppose $X_1, \ldots, X_k \subseteq X \subseteq \Z^{\oplus 2}_{\tilde{*}}$ are finite pairwise-disjoint subsets of $X$ with some fixed ordering $X_i \defeq \{\vx_{i,1}, \ldots, \vx_{i, \#X_i} \}$ for each  $1 \le i \le k$. Then $\kset(X; X_1, \ldots, X_k)$ is a polyhedral cone, explicitly given by
    \begin{equation}
    \label{eqn:kset}
        \begin{split}
             \kset(X; X_1, \ldots, X_k) = K'_{X; X_1, \ldots, X_k} \intersect L_{X; X_1, \ldots, X_k} \\ 
        \end{split}
    \end{equation}
    where 
    \begin{equation*}
        \begin{split}
            K'_{X; X_1, \ldots, X_k} &\defeq   \kset(X; \{\vx_{1,1}\}, \ldots, \{\vx_{1, \#X_1}\}, \ldots, \{\vx_{k,1}\}, \ldots, \{\vx_{k, \#X_k}\} ), \\
            L_{X; X_1, \ldots, X_k} &\defeq \{Q \in \overline{\mathcal{V}} \mid  Q(\vx_{i, 1}) = \cdots = Q(\vx_{i,\#X_i}) \emph{ for each } 1 \le i \le k\}, \\            
        \end{split}
    \end{equation*}
    and the sets $X_{i, j} \defeq \{\vx_{i, j}\}$ are ordered in increasing lexicographic order on the indexing pair $(i, j)$. 
\end{lemma}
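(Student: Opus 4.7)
The plan is to establish the set equality $\kset(X; X_1, \ldots, X_k) = K'_{X; X_1, \ldots, X_k} \cap L_{X; X_1, \ldots, X_k}$ by double inclusion, and then conclude that the right-hand side is a polyhedral cone because $K'_{X; X_1, \ldots, X_k}$ is one by \autoref{lemma:Kset_schiemann}, $L_{X; X_1, \ldots, X_k}$ is one since each constraint $Q(\vx_{i,1}) = Q(\vx_{i,j})$ is a pair of linear equalities on $Q$ under \autoref{convention:BQFs_as_tuples}, and polyhedral cones are closed under intersection.

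For the forward inclusion ($\subseteq$), I would take $Q \in \kset(X; X_1, \ldots, X_k)$ and read off from \autoref{def:kset_set} that $Q$ is constant on each non-empty $X_i$ with value $m_i$, which places $Q$ in $L_{X; X_1, \ldots, X_k}$. To place $Q$ in $K'_{X; X_1, \ldots, X_k}$, I would invoke \autoref{lemma:Kset_schiemann} applied to the flat lexicographic ordering of singletons $\{\vx_{1,1}\}, \ldots, \{\vx_{k, \#X_k}\}$, which requires checking the chain $Q(\vx_{1,1}) \le \cdots \le Q(\vx_{k,\#X_k}) \le Q(\vx)$ for $\vx \in \minimum(X \setminus (X_1 \cup \cdots \cup X_k))$. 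Within each block $X_i$ this collapses to equalities $m_i = m_i$; between consecutive blocks, $m_i \le m_{i+1}$ because $m_{i+1}$ minimizes $Q$ over a subset of the set defining $m_i$; and for any $\vx \in \minimum(X \setminus (X_1 \cup \cdots \cup X_k))$ one has $Q(\vx) \ge m_k$ since such $\vx$ lies in $X \setminus (X_1 \cup \cdots \cup X_{k-1})$, over which $m_k$ is the minimum.

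For the reverse inclusion ($\supseteq$), I would take $Q \in K'_{X; X_1, \ldots, X_k} \cap L_{X; X_1, \ldots, X_k}$, use $L$ to define $m_i$ as the common $Q$-value on each non-empty $X_i$, and combine the singleton chain from \autoref{lemma:Kset_schiemann} with $L$ to deduce $m_1 \le m_2 \le \cdots \le m_k$ along with $m_k \le Q(\vx)$ for every $\vx \in \minimum(X \setminus (X_1 \cup \cdots \cup X_k))$. Here I would invoke \autoref{lemma:min_alt_def} to promote this final bound from $\minimum$ to the entire tail $X \setminus (X_1 \cup \cdots \cup X_k)$. Partitioning $X \setminus (X_1 \cup \cdots \cup X_{i-1})$ into $X_i, X_{i+1}, \ldots, X_k$ and the tail, each piece carries $Q$-values at least $m_i$, so $m_i = \min Q(X \setminus (X_1 \cup \cdots \cup X_{i-1}))$, placing $Q$ in $\kset(X; X_1, \ldots, X_k)$ by \autoref{def:kset_set}.

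The main piece of care will be the bookkeeping between the block structure $X_1, \ldots, X_k$ and the flattened singleton ordering required to apply \autoref{lemma:Kset_schiemann}, together with the edge cases of possibly empty $X_i$ (where the $Q(X_i) = \emptyset$ branch of \autoref{def:kset_set} makes the $i$-th condition vacuous on both sides); once this is organized, the argument is a direct unpacking of definitions, with \autoref{lemma:min_alt_def} supplying the only nontrivial input, namely the promotion of the $\minimum$-bound on the tail to a bound over the entire tail.
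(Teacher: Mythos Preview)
Your proposal is correct and follows essentially the same approach as the paper, which simply asserts that the defining condition of $\kset(X;X_1,\ldots,X_k)$ can be rewritten as the conjunction of the singleton $\kset$-set condition and the within-block equality condition, then notes that $K'_{X;X_1,\ldots,X_k}$ is a polyhedral cone by \autoref{lemma:Kset_schiemann} and $L_{X;X_1,\ldots,X_k}$ is a linear subspace intersected with $\overline{\mathcal{V}}$. Your double-inclusion argument spells out exactly the bookkeeping the paper's one-line ``rewrite the definition'' leaves to the reader, and your explicit use of \autoref{lemma:min_alt_def} to promote the $\minimum$-bound to the full tail in the reverse inclusion is a detail the paper's proof silently relies on.
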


\begin{proof}
We can rewrite the definition as 
\begin{equation*}
\begin{split}
 \kset(X; X_1, \ldots, X_k) =& \:  \left\{ Q \in \overline{\mathcal{V}}\ \middle\vert \begin{array}{l}
    Q(\vx_{i,j}) = \min(Q(X\setminus\{\vx_{1,1}, \ldots, \vx_{i, j-1}\}) \\
   \quad \text{ and } Q(\vx_{i,1}) = \cdots = Q(\vx_{i, \#X_k})
  \end{array}\right\}\\ 
  =& \: K'_{X; X_1, \ldots, X_k} \intersect L_{X; X_1, \ldots, X_k}. 
\end{split}
\end{equation*}
$K'_{X; X_1, \ldots, X_k}$ is a polyhedral cone (by \autoref{lemma:Kset_schiemann}), and $L_{X; X_1, \ldots, X_k}$ is a polyhedral cone (since it is an intersection of a subspace with the cone $\overline{\mathcal{V}}$). Therefore their intersection is also a polyhedral cone. 
\end{proof}
\begin{remark}
    In \autoref{eqn:kset}, it is not necessary to order the sets $X_{i, j} = \{\vx_{i, j}\}$ in increasing lexicographic order. They only need to be in increasing order of index $i$, with ties among the index $j$ broken arbitrarily. 
\end{remark}

\section{The Extended Refinement}

\subsection{Irrational and Normalized Linear Relations}
  
\begin{lemma}[\textbf{Irrational 3-term Linear Relations}]
\label{lemma:irrational}
  Suppose $Q_1, Q_2, Q_3$ are positive-definite integer-valued  binary quadratic forms satisfying 
  \[\alpha_1 \uptheta_{Q_1} + \alpha_2 \uptheta_{Q_2} + \alpha_3 \uptheta_{Q_3} = 0,\]
   for some $\alpha_1, \alpha_2, \alpha_3 \in \RR$, where one of the ratios $\frac{\alpha_i}{\alpha_j} \not\in \QQ$ for some $1 \le i, j \le 3$ with $\alpha_j \neq 0$. Then  $\uptheta_{Q_1} = \uptheta_{Q_2} = \uptheta_{Q_3}$.  
\end{lemma}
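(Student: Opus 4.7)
The key observation is that each representation number $r_{Q_i}(m)$ is a non-negative integer, so equating coefficients of $q^m$ in the relation $\alpha_1 \uptheta_{Q_1} + \alpha_2 \uptheta_{Q_2} + \alpha_3 \uptheta_{Q_3} = 0$ turns this single equation into a (countably infinite) homogeneous linear system in the unknowns $\alpha_1, \alpha_2, \alpha_3$, whose coefficient matrix has entries $r_{Q_i}(m) \in \ZZ$. My plan is to analyze the real solution space $S \subseteq \RR^3$ of this system, rule out $\dim S \le 1$ using the irrationality hypothesis, and then deduce that the three theta series coincide from the constraint $r_{Q_i}(0) = 1$.

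Because the defining equations have rational coefficients, Gaussian elimination over $\QQ$ produces an $\RR$-basis of $S$ consisting of rational vectors; equivalently, $S$ is the $\RR$-span of $S \cap \QQ^3$ and $\dim_\RR S = \dim_\QQ(S \cap \QQ^3) =: d$. If $d = 0$, then $(\alpha_1, \alpha_2, \alpha_3) = (0,0,0)$, contradicting $\alpha_j \neq 0$. If $d = 1$, then $S = \RR \cdot (\beta_1, \beta_2, \beta_3)$ for some $(\beta_1, \beta_2, \beta_3) \in \QQ^3$, so the given triple is a nonzero real scalar multiple $(\alpha_1, \alpha_2, \alpha_3) = c(\beta_1, \beta_2, \beta_3)$, which forces every defined ratio $\alpha_i/\alpha_j = \beta_i/\beta_j$ to be rational and contradicts the hypothesis.

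Hence $d \ge 2$, and by rank--nullity the three integer vectors $\vec{v}_i \defeq (r_{Q_i}(m))_{m \ge 0}$ span a $\QQ$-subspace of $\QQ^{\ZZ_{\ge 0}}$ of dimension at most $3 - d \le 1$. Since $Q_i$ is positive-definite, the unique representation of $0$ is by $(0,0)$, giving $r_{Q_i}(0) = 1$; in particular, each $\vec{v}_i$ is nonzero and has its initial coordinate equal to $1$. Two distinct rational scalar multiples of a common nonzero rational vector cannot share the same initial coordinate, so $\vec{v}_1, \vec{v}_2, \vec{v}_3$ must all coincide, yielding $\uptheta_{Q_1} = \uptheta_{Q_2} = \uptheta_{Q_3}$ as desired.

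I do not expect a significant obstacle here: the argument is entirely linear algebra over $\QQ$ versus $\RR$, and the only delicate point is the descent identity $\dim_\RR S = \dim_\QQ(S \cap \QQ^3)$ for kernels of rational matrices, which is standard and can be cited without elaboration.
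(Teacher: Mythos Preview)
Your proof is correct and takes a genuinely different route from the paper's. The paper first normalizes the relation: using the constant term to get $\alpha_1+\alpha_2+\alpha_3=0$ and sign considerations, it rewrites the identity as $\alpha\,\uptheta_{Q_1}+(1-\alpha)\,\uptheta_{Q_2}=\uptheta_{Q_3}$ with $\alpha=-\alpha_1/\alpha_3$, and then argues coefficient-by-coefficient that if $\alpha\notin\QQ$ then $c_2+\alpha(c_1-c_2)\in\ZZ$ forces $c_1=c_2$. Your argument instead treats the relation as an integer-coefficient homogeneous system in $(\alpha_1,\alpha_2,\alpha_3)$, uses the rational basis of its kernel to rule out $\dim S\le 1$ under the irrationality hypothesis, and then applies rank--nullity together with the normalization $r_{Q_i}(0)=1$ to collapse the column space. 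The paper's approach is shorter and more concrete; yours is more structural, bypasses the WLOG normalization entirely, and generalizes verbatim to $n$-term relations among theta series (or indeed any integer-coefficient power series with constant term $1$).
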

\begin{proof}
Without loss of generality, we can assume $\alpha_1, \alpha_2 \ge 0$ and $\alpha_3 < 0$. Then, letting $\alpha \defeq -\frac{\alpha_1}{\alpha_3}$ we can rewrite  \autoref{eqn:three_term_linear} as 
\begin{equation}
\label{eqn:alpha_1_eqn}
\alpha \uptheta_{Q_1} + (1 - \alpha) \uptheta_{Q_2} = \uptheta_{Q_3}.    
\end{equation}
If $\alpha \not\in \Q$, then we must have $\uptheta_{Q_1} = \uptheta_{Q_2}$, since if $c_1, c_2$ are the coefficients of $q^m$ in $\uptheta_{Q_1}$ and $\uptheta_{Q_2}$ respectively for some $m$, then $c_1 \neq c_2  \iff \alpha c_1 + (1 -\alpha) c_2 = y + \alpha (c_1 - c_2) \not\in \Q$. Plugging $\uptheta_{Q_1} = \uptheta_{Q_2}$ in the equation gives us $\uptheta_{Q_1} = \uptheta_{Q_2} = \uptheta_{Q_3}$, as required.      
\end{proof}
\begin{remark}[\textbf{Solving Irrational 3-term Linear Relations}]
    \autoref{lemma:irrational} shows that solving irrational 3-term linear relations is equivalent to solving the rational 2-term relation $\uptheta_{Q_1} = \uptheta_{Q_2}$. 
\end{remark}

\smallskip
From \autoref{lemma:irrational} we reduce to solving rational linear relations, which we now normalize.
\begin{definition}[\textbf{Normalized Rational 3-term Linear Relations}]
\label{def:normalized_rational}
     Given positive-definite binary quadratic forms $Q_1, Q_2, Q_3$ satisfying the relation \[\alpha_1\uptheta_{Q_1} + \alpha_2 \uptheta_{Q_2} + \alpha_3 \uptheta_{Q_3} = 0\] with $\alpha_1, \alpha_2, \alpha_3 \in \QQ$, we define its \textbf{normalized rational 3-term linear relation} by \[\beta_1 \uptheta_{Q_{\sigma(1)}} + \beta_2 \uptheta_{Q_{\sigma(2)}} = \uptheta_{Q_{\sigma(3)}},\] 
     with $\beta_1, \beta_2 \in \QQ_{\ge 0}$ and some permutation $\sigma$ of $\{1, 2, 3\}$.
\end{definition}
\begin{remark}[\textbf{Integral Form of Normalized Rational 3-term Linear Relation}]
\label{remark:integral_form}
We can write the normalized rational 3-term linear relation with $\beta_1 \defeq \frac{a}{c}$ and $\beta_2 \defeq \frac{b}{c}$ where $a, b, c \in \ZZ_{\ge 0}$, $c > 0$,  $\gcd(a, c) = \gcd(b, c) = 1$, and necessarily $c = a+b$ (by looking at the first term of the theta series). 
\end{remark}

\smallskip
\begin{definition}[\textbf{Solution Set $\mathcal{D}_{a, b}$ and Diagonal $\Delta$}]
  \label{def:solution_set}
  Given the integral form of the normalized rational 3-term linear relation 
  \begin{equation}
  \label{eqn:rational_lin}
      \frac{a}{a + b} \uptheta_{Q_1} + \frac{b}{a+b} \uptheta_{Q_2} = \uptheta_{Q_3},
  \end{equation}
  as described in   \autoref{remark:integral_form}, we define its \textbf{solution set}  $\mathcal{D}_{a, b}$ by  
  \[\mathcal{D}_{a, b} \defeq \{(Q_1, Q_2, Q_3) \in \mathcal{V} \times \mathcal{V} \times \mathcal{V} \mid \emph{ \autoref{eqn:rational_lin} holds}\}.\]
  We further define the \textbf{diagonal (solution set)} $\Delta \subseteq \mathcal{D}_{a, b}$ by 
  \[\Delta \defeq \{(Q_1, Q_2, Q_3) \in \mathcal{V} \times \mathcal{V} \times \mathcal{V} \mid Q_1 = Q_2 = Q_3\}.\]
\end{definition}

\subsection{The Key Lemma}

In this subsection we characterize non-negative integer solutions for three-term linear diophantine equations. This will be useful for iterating over minimal vectors in  \nameref{algo:extended_refinement_algorithm}.

\smallskip
We begin with the following definition.
\begin{definition}[\textbf{The Linset $\mathcal{L}_{a, b}$}]
\label{def:linset}
      Suppose $a, b \in  \Z_{\ge 0}$ with $(a, b) \neq (0, 0)$. Then we define the \textbf{linset} $\cL_{a, b}$  by \[\mathcal{L}_{a,b} \defeq \{L_1, L_2, L_3\}\]
      where 
      \[L_1 \defeq (1, 1, 1), \quad L_2 \defeq \frac{(a+b, 0, a)}{\gcd(a, b)}, \quad \text{ and } \quad L_3 \defeq \frac{(0, a+b, b)}{\gcd(a, b)}.\]
      We also denote the $j$-th coordinate of $L_i$ by $L_{i, j}$ for $1 \le i, j \le 3$.
\end{definition}
The following lemma highlights the significance of the linset.
\begin{lemma}[\textbf{The Key Lemma}]
\label{lemma:linear}
    Suppose $a, b \in \Z_{\ge 0}$ with $(a, b) \neq (0, 0)$. Then a non-negative tuple $(x_0, y_0, z_0) \in \Z_{\ge 0}^{\oplus 3}$ is a solution to the equation 
    \begin{equation}
        \label{eqn:linear_dio_eqn}
        \frac{a}{a+b} \: x + \frac{b}{a+b} \: y = z
    \end{equation}
    if and only if it is a $\ZZ_{\ge 0}$-linear combination of elements of $\mathcal{L}_{a, b}$ (i.e.
    \[(x_0, y_0, z_0) = c_1 L_1 + c_2 L_2 + c_3 L_3\]
    for some $c_1, c_2, c_3 \in \ZZ_{\ge 0}$).
\end{lemma}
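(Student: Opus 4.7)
The plan is to verify both directions. The $(\Leftarrow)$ direction is routine: each generator $L_i$ satisfies \autoref{eqn:linear_dio_eqn} by direct computation (for $L_1$, $\frac{a}{a+b} + \frac{b}{a+b} = 1$; for $L_2$, $\frac{a}{a+b} \cdot \frac{a+b}{d} = \frac{a}{d}$; and symmetrically for $L_3$), and since the equation is linear, any $\ZZ_{\ge 0}$-combination inherits the property.

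For $(\Rightarrow)$, I set $d \defeq \gcd(a,b)$, $a' \defeq a/d$, $b' \defeq b/d$, so that $\gcd(a', b') = 1$. Clearing denominators in \autoref{eqn:linear_dio_eqn} and rearranging gives the single relation $a'(x_0 - z_0) = b'(z_0 - y_0)$. Coprimality of $a'$ and $b'$ then supplies a unique $k \in \ZZ$ with
\[x_0 - z_0 = k b', \qquad z_0 - y_0 = k a'.\]
Next, a direct expansion shows that an arbitrary candidate $c_1 L_1 + c_2 L_2 + c_3 L_3$ has coordinates
\[\bigl(c_1 + c_2(a'+b'),\ c_1 + c_3(a'+b'),\ c_1 + c_2 a' + c_3 b'\bigr),\]
so the problem reduces to finding non-negative integers $c_1, c_2, c_3$ satisfying $c_2 - c_3 = k$ along with the coordinate-matching equations.

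The key step is to exhibit an explicit non-negative choice, splitting on the sign of $k$. When $k \ge 0$, I take $(c_1, c_2, c_3) = (y_0, k, 0)$; when $k < 0$, I take $(c_1, c_2, c_3) = (x_0, 0, -k)$. In either case non-negativity is immediate from $x_0, y_0 \ge 0$ together with the sign of $k$, and a short substitution using the two identities $k a' = z_0 - y_0$ and $k b' = x_0 - z_0$ verifies that all three coordinate equations hold.

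The only subtlety is the degenerate situation $ab = 0$, where one of $a', b'$ vanishes and the corresponding defining equation for $k$ becomes vacuous; but the hypothesis $(a,b) \neq (0,0)$ guarantees that the other equation still determines $k$ uniquely, after which the explicit formulas above carry through unchanged. I therefore do not expect a real obstacle in the argument: it is essentially a parametrization of integer solutions via the Bézout structure $\gcd(a', b') = 1$, followed by a clean case split on $\mathrm{sgn}(k)$ to produce a non-negative witness.
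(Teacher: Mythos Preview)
Your argument is correct. Both proofs rest on the same core idea—coprimality of $a/\gcd(a,b)$ and $b/\gcd(a,b)$ forces an integer parameter $k$, after which one of $c_2,c_3$ can be taken to be zero—so the two are close in spirit; but the organization differs. The paper first argues separately that $z_0$ cannot be the strict minimum of $\{x_0,y_0,z_0\}$, then subtracts $\min(x_0,y_0)$ and uses coprimality of $b$ and $a+b$ (resp.\ $a$ and $a+b$) to extract $k\ge 0$. You instead rewrite the relation as $a'(x_0-z_0)=b'(z_0-y_0)$, pull out $k\in\ZZ$ directly from $\gcd(a',b')=1$, and split on $\mathrm{sgn}(k)$. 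Your route is slightly more economical: it bypasses the preliminary ``$z_0$ is not the minimum'' step, and your explicit witnesses $(y_0,k,0)$ and $(x_0,0,-k)$ make the verification a one-line substitution. The paper's phrasing, on the other hand, makes the geometric picture (subtract off the diagonal $L_1$ first) a bit more visible. Either way, both proofs produce the same decomposition with $c_1=\min(x_0,y_0)$ and exactly one of $c_2,c_3$ nonzero.
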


\begin{proof} Without loss of generality we can assume that $\gcd(a, b) = 1$ since both \autoref{def:linset} and \autoref{eqn:linear_dio_eqn}  are invariant under the scaling $(a, b) \mapsto \frac{(a, b)}{\gcd(a, b)}$.

($\Longleftarrow$) Note that $(1, 1, 1), (a+b, 0 ,a), (0, a+b, b)$ are non-negative integer solutions to \autoref{eqn:linear_dio_eqn}. Thus any $\ZZ_{\ge 0}$-linear combination of these solutions is also a non-negative integral solution this equation, which suffices.

($\Longrightarrow$) Suppose $(x, y, z) = (x_0, y_0, z_0)$ is a solution to \autoref{eqn:linear_dio_eqn} with $x_0, y_0, z_0 \in \ZZ_{\ge 0}$. We claim that $\min\{x_0, y_0, z_0\} \in \{x_0, y_0\}$. Assume otherwise $\min\{x_0, y_0, z_0\} = z_0$. Then $(x_0 - z_0, y_0 - z_0, 0)$ is also a non-negative integer solution to \autoref{eqn:linear_dio_eqn}. Thus \[a(x_0 - z_0) + b(y_0 - z_0) = 0\] but since $a, b > 0$ we must have $x_0 = z_0$ and $y_0 = z_0$. Thus $\min\{x_0, y_0, z_0\} \in \{x_0, y_0\}$.

\textbf{Case 1}: Suppose $\min \{x_0, y_0, z_0\} = x_0$. Then $(0, y_0 - x_0, z_0 - x_0)$ is also a non-negative integer solution to \autoref{eqn:linear_dio_eqn}. This implies that \[b(y_0 - x_0) = (a+b)(z_0 - x_0),\] but since $\gcd(a+b, b) = \gcd(a, b) = 1$, by fundamental theorem of arithmetic we must have $y_0 - x_0 = (a+b)k$ and $z_0 - x_0 = bk$ for some $k \in \ZZ_{\ge 0}$. This shows that we have the desired $\ZZ_{\ge 0}$-linear combination
\[(x_0, y_0, z_0) = x_0 \cdot (1, 1, 1) + k \cdot (0, a+b, b).\] 

\textbf{Case 2}: Suppose $\min \{x_0, y_0, z_0\} = y_0$, then similarly we have 
\[(x_0, y_0, z_0) = y_0 \cdot (1, 1, 1) + k \cdot (a+b, 0, a)\] for some $k \in \ZZ_{\ge 0}$, which completes the proof. 
\end{proof}

\begin{remark}[\textbf{Linset Minimality}]
\label{rem:linset_minimality}
    When both $a \neq 0$ and $b \neq 0$, then the vectors $L_1, L_2, L_3$ in $\cL_{a, b}$ are the minimal set of generators for the $\ZZ_{\ge 0}$-span of $\cL_{a, b}$, given by \[\ZZ_{\ge 0} [\mathcal{L}_{a, b}] \defeq \{c_1 L_1 + c_2 L_2 + c_3 L_3 \mid c_1, c_2, c_3 \in \ZZ_{\ge 0}\}.\] When $a = 0$ or $b = 0$, the vectors $L_1, L_2, L_3$ are not a minimal set of generators for $\ZZ_{\ge 0} [\mathcal{L}_{a, b}] $, since $L_1 = L_2 + L_3$. A minimal set $\cL_{a, b}'$ of generators for $\ZZ_{\ge 0} [\mathcal{L}_{a, b}]$ is given by $\cL_{a, b}' \defeq \{L_2, L_3\}$. 
\end{remark}

\subsection{The Extended Refinement Algorithm}
\label{sec:refinement_algo}

In this subsection we state the extended refinement algorithm and prove its correctness for any given pair $a, b \in \ZZ_{\ge 0}$ with $(a, b) \neq (0, 0)$.

\medskip
We begin by defining the notion of a covering. 

\begin{definition}[\textbf{Covering}]
\label{def:covering}
     Suppose $S \subseteq \R^n$. Then we say $\mathcal{F} \defeq \{U \mid U \subseteq \R^n\}$ is a \textbf{covering of $S$} if  \[S \subseteq \bigcup_{U \in \mathcal{F}} U.\] 
\end{definition}

Next we define invariant properties which allow us to iteratively refine coverings. 

\begin{definition}[\textbf{Covering Parameter}] 
\label{def:covering_parameter}
We define a \textbf{covering parameter} $P$ as a tuple $P \defeq ((X_i)_{i=1}^k, (Y_i)_{i=1}^k, (Z_i)_{i=1}^k)$ where the $X_i, Y_i, Z_i \subseteq \ZZ^{\oplus 2}_{\tilde{*}}$ with $1 \le i \le k$ and $k \in \ZZ_{\ge 0}$. 
\end{definition}

\begin{definition}[\textbf{Admissibility of Covering Parameter}]
\label{def:covering_admissibility}
         Suppose $a, b \in \ZZ_{\ge 0}$ with $(a, b) \neq (0, 0)$ and  $T \subseteq \mathcal{V} \times \mathcal{V} \times \mathcal{V}$. Suppose also that $P \defeq ((X_i)_{i=1}^k, (Y_i)_{i=1}^k, (Z_i)_{i=1}^k)$ is a covering parameter with $k \in \ZZ_{\ge 0}$.  Then we say that $T$ \textbf{admits covering parameter $P$ (for the pair $(a, b)$)} if the following properties hold:
        \begin{enumerate}[label*=\arabic*.]
                \item[1)] $\vec{n} \defeq (\#X_i, \#Y_i, \#Z_i) \in \mathcal{L}_{a,b}$ for each $1 \le i \le  k$,
                \item[2)] $T \subseteq \kset(\Z^{\oplus 2}_{\tilde{*}}; X_1, \ldots, X_k) \times \kset(\Z^{\oplus 2}_{\tilde{*}}; Y_1, \ldots, Y_k) \times \kset(\Z^{\oplus 2}_{\tilde{*}}; Z_1, \ldots, Z_k)$, and 
                \item[3)] $T \subseteq \left\{ (Q_1, Q_2, Q_3) \in \overline{\mathcal{V}} \times \overline{\mathcal{V}} \times \overline{\mathcal{V}} \ \middle\vert \begin{array}{l}
                 \emph{for each } 1 \le i \le k, \emph{ there exists } m_i \in \ZZ_{\ge 0} \emph{ so that} \\ 
                \:\:\: \emph{we have } Q_1(X_i), Q_2(Y_i), Q_3(Z_i) \in \{\emptyset, \{m_i\}\}
                \end{array}\right\}$.
        \end{enumerate}
\end{definition}

\begin{definition}[\textbf{Covering Parameter Auxiliary Sets}]
\label{def:auxiliary_sets}
    Suppose $P \defeq ((X_i)_{i=1}^k, (Y_i)_{i=1}^k, (Z_i)_{i=1}^k)$ is a covering parameter with  $k \in \ZZ_{\ge 0}$. Further suppose $\vec{n} \defeq (n_1, n_2, n_3) \in \mathcal{L}_{a,b}$  and 
    \begin{equation*}
        \begin{split}
            \mathcal{X} &\defeq \mathcal{X}_{\vec{n}} \in \operatorname{MIN}_{n_1}\left(\mathbb{Z}_{\tilde{*}}^{\oplus 2} \setminus \left (X_1 \union \ldots \union X_k\right)\right), \\
            \mathcal{Y} &\defeq \mathcal{Y}_{\vec{n}} \in \operatorname{MIN}_{n_2}\left(\mathbb{Z}_{\tilde{*}}^{\oplus 2} \setminus \left(Y_1 \union \ldots \union Y_k\right)\right), \\
             \mathcal{Z} &\defeq \cZ_{\vec{n}} \in \operatorname{MIN}_{n_3}\left(\mathbb{Z}_{\tilde{*}}^{\oplus 2} \setminus \left(Z_1 \union \ldots \union Z_k\right)\right). \\
        \end{split}
    \end{equation*}
Then we define \textbf{auxiliary sets} (associated to the parameters $\cX, \cY, \cZ, P, \vec{n}$) by
\begin{equation*}
\begin{split}
\mathcal{K}_{\cX, \cY, \cZ} &\defeq \mathcal{K}_{\cX,\cY,\cZ; P, \vec{n}}  \defeq \kset(\Z^{\oplus 2}_{\tilde{*}}; X_1, \ldots, X_k, \cX) \times \kset(\Z^{\oplus 2}_{\tilde{*}}; Y_1, \ldots, Y_k, \cY) \times \kset(\Z^{\oplus 2}_{\tilde{*}}; Z_1, \ldots, Z_k, \cZ), \\
\mathcal{Q}_{\cX,\cY,\cZ} &\defeq \mathcal{Q}_{\cX,\cY,\cZ; P, \vec{n}}  \defeq \{(Q_1, Q_2, Q_3) \in \overline{\mathcal{V}} \times \overline{\mathcal{V}} \times \overline{\mathcal{V}}\mid Q_1(\mathcal{X}), Q_2(\mathcal{Y}), Q_3(\mathcal{Z}) \in \{\emptyset, \{m\}\} \emph{ for some } m \in \ZZ_{\ge 0}\}. \\
\end{split}
\end{equation*}
\end{definition}

\begin{definition}[\textbf{The Refinement Process}]
\label{def:refinement_procedure}
 Suppose $a, b \in \ZZ_{\ge 0}$ with $(a, b) \neq (0, 0)$,  and $T \subseteq \mathcal{V} \times \mathcal{V} \times \mathcal{V}$ admits covering parameter $P \defeq ((X_i)_{i=1}^k, (Y_i)_{i=1}^k, (Z_i)_{i=1}^k)$ with  $k \in \ZZ_{\ge 0}$. Then we define the \textbf{refinement $\mathcal{M}_{a, b}((T, P))$ of the pair $(T, P)$} as the set of pairs 
\begin{equation}
\label{eqn:Mab}
    \mathcal{M}_{a, b}((T, P))  \defeq\bigcup_{\substack{(n_1, n_2, n_3) \\ \in \mathcal{L}_{a, b}}} \:\: \bigcup_{\substack{ \mathcal{X} \in \operatorname{MIN}_{n_1}\left(\mathbb{Z}_{\tilde{*}}^{\oplus 2} \setminus\left(X_1\union \ldots\union X_k\right)\right) \\ \mathcal{Y} \in \operatorname{MIN}_{n_2}\left(\mathbb{Z}_{\tilde{*}}^{\oplus 2} \setminus\left(Y_1 \union \ldots \union Y_k\right)\right) \\ \mathcal{Z} \in \operatorname{MIN}_{n_3}\left(\mathbb{Z}_{\tilde{*}}^{\oplus 2} \setminus\left(Z_1\union \ldots\union Z_k\right)\right) }}\left\{(T_{\cX, \cY, \cZ}, P_{\cX, \cY, \cZ; \, T})\right\},
\end{equation}
where we define the \textbf{refined set $T_{\cX, \cY, \cZ}$} by \[T_{\cX, \cY, \cZ} \defeq T \intersect \mathcal{K}_{\cX,\cY,\cZ} \intersect \mathcal{Q}_{\cX,\cY,\cZ}\] 
and the \textbf{covering parameter $P_{\cX, \cY, \cZ; \, T}$ associated to $T_{\cX, \cY, \cZ}$} by \[P_{\cX, \cY, \cZ; \, T} \defeq ((X_i)_{i=1}^{k+1}, (Y_i)_{i=1}^{k+1}, (Z_i)_{i=1}^{k+1}),\] where $X_{k+1} \defeq \cX$, $Y_{k+1} \defeq \cY$, and $Z_{k+1} \defeq \cZ$. Finally, we define the \textbf{refinement of a set $\mathcal{T}$ of pairs  $(T, P)$} by
$$
\mathtt{refine}_{a, b}(\mathcal{T}):=\bigcup_{(T, P) \in \mathcal{T}} \mathcal{M}_{a, b}((T, P)).
$$ 
\end{definition}

\begin{remark}[\textbf{Covering Parameter Uniqueness}]
Suppose two associated covering parameters $P_{\cX, \cY, \cZ; \; T}$ and $P_{\cX', \cY', \cZ'; \; T}$ agree. Then the triples of sequences $((X_i)_{i=1}^{k+1}, (Y_i)_{i=1}^{k+1}, (Z_i)_{i=1}^{k+1})$ and \linebreak $((X'_i)_{i=1}^{k+1}, (Y'_i)_{i=1}^{k+1}, (Z'_i)_{i=1}^{k+1})$ must agree,  implying $\cX = \cX', \cY = \cY', \cZ = \cZ'$. Thus every covering parameter appears exactly once in $\mathcal{M}_{a, b}((T, P))$.       
\end{remark}

\begin{lemma}[\textbf{Covering Parameter of a Refined Set}]
    Suppose $T \subseteq \mathcal{V} \times \mathcal{V} \times \mathcal{V}$ admits covering parameter $((X_i)_{i=1}^k, (Y_i)_{i=1}^k, (Z_i)_{i=1}^k)$ with  $k \in \ZZ_{\ge 0}$. Then $T_{\cX, \cY, \cZ}$ admits the covering parameter $P_{\cX, \cY, \cZ; \, T}$ associated to $T_{\cX, \cY, \cZ}$.
\end{lemma}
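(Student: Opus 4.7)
The plan is to verify the three defining conditions of \autoref{def:covering_admissibility} directly for the pair $(T_{\cX,\cY,\cZ}, P_{\cX,\cY,\cZ;\,T})$, where $P_{\cX,\cY,\cZ;\,T} = ((X_i)_{i=1}^{k+1}, (Y_i)_{i=1}^{k+1}, (Z_i)_{i=1}^{k+1})$ with $X_{k+1} \defeq \cX$, $Y_{k+1} \defeq \cY$, $Z_{k+1} \defeq \cZ$. Since $T$ already admits its original covering parameter, all three conditions are automatic on the indices $1 \le i \le k$, so the content of the proof is to verify each condition at the single new index $i = k+1$.

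First, for condition 1, I would observe that the outer union in \autoref{eqn:Mab} ranges only over tuples $(n_1, n_2, n_3) \in \mathcal{L}_{a,b}$, and by construction $\#\cX = n_1$, $\#\cY = n_2$, $\#\cZ = n_3$ (since $\cX \in \minimum_{n_1}(\cdots)$, etc.). Hence $(\#X_{k+1}, \#Y_{k+1}, \#Z_{k+1}) = (n_1, n_2, n_3) \in \mathcal{L}_{a,b}$, as required.

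Next, for condition 2, I would unpack the definition of the refined set, using
\[
T_{\cX,\cY,\cZ} \defeq T \cap \mathcal{K}_{\cX,\cY,\cZ} \cap \mathcal{Q}_{\cX,\cY,\cZ} \;\subseteq\; \mathcal{K}_{\cX,\cY,\cZ},
\]
and then note that by \autoref{def:auxiliary_sets},
\[
\mathcal{K}_{\cX,\cY,\cZ} = \kset(\ZZ_{\tilde{*}}^{\oplus 2}; X_1, \ldots, X_k, \cX) \times \kset(\ZZ_{\tilde{*}}^{\oplus 2}; Y_1, \ldots, Y_k, \cY) \times \kset(\ZZ_{\tilde{*}}^{\oplus 2}; Z_1, \ldots, Z_k, \cZ),
\]
which is exactly the product of $\kset$-sets indexed by the full sequences of length $k+1$ in the new covering parameter.

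Finally, for condition 3, I would use that $T_{\cX,\cY,\cZ} \subseteq \mathcal{Q}_{\cX,\cY,\cZ}$, which by definition produces $m_{k+1} \in \ZZ_{\ge 0}$ such that $Q_1(\cX), Q_2(\cY), Q_3(\cZ) \in \{\emptyset, \{m_{k+1}\}\}$ for every $(Q_1, Q_2, Q_3) \in T_{\cX,\cY,\cZ}$. The existence of $m_i$ for $1 \le i \le k$ transfers unchanged from $T \supseteq T_{\cX,\cY,\cZ}$. There is no real obstacle here: the statement is essentially a bookkeeping check that the refinement operator is well-defined on admissible pairs, and the only mild subtlety is making sure that $m_{k+1}$ is guaranteed to be a non-negative integer rather than merely a real number, which follows because $\cX, \cY, \cZ \subset \ZZ_{\tilde{*}}^{\oplus 2}$ and the $Q_i$ are integer-valued on $\ZZ^{\oplus 2}$ (via the intersection with $\mathcal{V} \times \mathcal{V} \times \mathcal{V}$ inherited from $T$).
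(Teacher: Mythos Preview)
Your proposal is correct and follows essentially the same approach as the paper's proof: verify conditions 1)--3) of \autoref{def:covering_admissibility} by inheriting them on indices $1 \le i \le k$ from $T$, and reading them off at $i = k+1$ from the containments $T_{\cX,\cY,\cZ} \subseteq \mathcal{K}_{\cX,\cY,\cZ}$ and $T_{\cX,\cY,\cZ} \subseteq \mathcal{Q}_{\cX,\cY,\cZ}$ together with $(n_1,n_2,n_3) \in \mathcal{L}_{a,b}$.

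One small remark on your final aside: the worry about $m_{k+1}$ being an integer is unfounded, since the definition of $\mathcal{Q}_{\cX,\cY,\cZ}$ in \autoref{def:auxiliary_sets} already quantifies over $m \in \ZZ_{\ge 0}$, exactly matching condition 3) of \autoref{def:covering_admissibility}; no further argument is needed. Moreover, your proposed justification that ``the $Q_i$ are integer-valued on $\ZZ^{\oplus 2}$ via the intersection with $\mathcal{V} \times \mathcal{V} \times \mathcal{V}$'' is not correct as stated, since $\mathcal{V}$ is the real-valued reduction domain in $\RR^3$ (see \autoref{def:reduced_forms}) and does not restrict to integer-valued forms. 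This does not affect the validity of your main argument.
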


\begin{proof}
    We will verify that properties 1) - 3) in \autoref{def:covering_admissibility} hold.

    \begin{enumerate}
         \item[1)] By assumption, $(\#X_i, \#Y_i, \#Z_i) \in \cL_{a, b}$ for $1 \le i \le k$. For $i = k+1$, recall that $X_i = \mathcal{X} \in \operatorname{MIN}_{n_1}\left(\mathbb{Z}_{\tilde{*}}^{\oplus 2} \setminus \left (X_1 \union \ldots \union X_k\right)\right)$, 
            $Y_i = \mathcal{Y}  \in \operatorname{MIN}_{n_2}\left(\mathbb{Z}_{\tilde{*}}^{\oplus 2} \setminus \left(Y_1 \union \ldots \union Y_k\right)\right)$, and
             $Z_i = \mathcal{Z} \in \operatorname{MIN}_{n_3}\left(\mathbb{Z}_{\tilde{*}}^{\oplus 2} \setminus \left(Z_1 \union \ldots \union Z_k\right)\right)$ where $(n_1, n_2, n_3) \in \cL_{a,b}$. So $(\#X_i, \#Y_i, \#Z_i) = (n_1, n_2, n_3) \in \cL_{a,b}$.

         \item[2)] Recall that $T_{\cX, \cY, \cZ} = T \intersect \mathcal{K}_{\cX,\cY,\cZ} \intersect \mathcal{Q}_{\cX,\cY,\cZ}$ and therefore \[T \subseteq \mathcal{K}_{\cX, \cY, \cZ} = \kset(\Z^{\oplus 2}_{\tilde{*}}; X_1, \ldots, X_k, \cX) \times \kset(\Z^{\oplus 2}_{\tilde{*}}; Y_1, \ldots, Y_k, \cY) \times \kset(\Z^{\oplus 2}_{\tilde{*}}; Z_1, \ldots, Z_k, \cZ).\]

         \item[3)] We have 
         \[T_{\cX, \cY, \cZ} \subseteq T \subseteq \left\{ (Q_1, Q_2, Q_3) \in \overline{\mathcal{V}} \times \overline{\mathcal{V}} \times \overline{\mathcal{V}} \ \middle\vert \begin{array}{l}
                 \text{for each } 1 \le i \le k, \text{ there exists } m_i \in \ZZ_{\ge 0} \text{ so that} \\ 
                \:\:\: \text{we have } Q_1(X_i), Q_2(Y_i), Q_3(Z_i) \in \{\emptyset, \{m_i\}\}
                \end{array}\right\}.\]

         For $i = k+1$, since $T_{\cX, \cY, \cZ} = T \intersect \mathcal{K}_{\cX,\cY,\cZ} \intersect \mathcal{Q}_{\cX,\cY,\cZ}$ we have \[T \subseteq \mathcal{Q}_{\cX,\cY,\cZ} =  \{(Q_1, Q_2, Q_3) \in \overline{\mathcal{V}} \times \overline{\mathcal{V}} \times \overline{\mathcal{V}}\mid Q_1(\mathcal{X}), Q_2(\mathcal{Y}), Q_3(\mathcal{Z}) \in \{\emptyset, \{m_{k+1}\}\} \text{ for some } m_{k+1} \in \R_{>0}\},\]
         as desired.
     \end{enumerate} 
\end{proof}

\begin{definition}[\textbf{The Refinement Sequence}]
\label{def:refinement_sequence}
    Suppose $a, b \in \ZZ_{\ge 0}$ with $(a, b) \neq (0, 0)$. Then we define the \textbf{refinement sequence} $(\mathcal{T}_i)_{i=0}^{\infty}$ (associated to $(a, b)$) by 
    \[\mathcal{T}_i = \begin{cases}
        \left\{(T_0, P_0)\right\} &  \text{if } i = 0,\\
        \mathtt{refine}_{a, b}(\mathcal{T}_{i-1}) & \text{if } i \ge 1,\\
    \end{cases}\]
    where  $T_0 \defeq \mathcal{V} \times {\mathcal{V}} \times {\mathcal{V}}$, $P_0 \defeq ((), (), ())$, and $\mathtt{refine}_{a, b}(\mathcal{T}_{i-1})$ is as defined in \autoref{def:refinement_procedure}.
\end{definition}

Next we define the Extended Refinement Algorithm.

\medskip

\begin{algorithm}[H]
\label{algo:extended_refinement_algorithm}
 \caption{\textbf{Extended Refinement Algorithm}}

  {\textbf{input}}: $(a, b), \quad \mathtt{STOP\_SET}, \quad \mathtt{MAX\_ITERATIONS}$\;

\medskip 

 \SetKwProg{Fn}{function}{}{}
 \Fn{$\mathtt{{removeStopSetSubsets}}(\mathcal{S}, \: \mathtt{STOP\_SET})$}{
     {\textbf{initialize}} $\mathcal{S}' \defeq \{\}$\;
     \For{$(T, P)$ $\mathrm{\mathbf{in}}$ $\mathcal{S}$}{
    \If{$T \not\subseteq \mathtt{STOP\_SET}$}{
        {\textbf{update}} $\mathcal{S}'  = \mathcal{S}' \union \{(T, P)\}$;
    }
    }
    {\textbf{return}} $\mathcal{S}'$\;
 } 

 \medskip
 {\textbf{initialize}} $ \mathcal{S}_0 \defeq \{(T_0, P_0)\}$\;
 {\textbf{initialize}} $\mathtt{i} \defeq 0$\;
 \While{$\mathcal{S}_i \emph{ is not } \{\} \emph{ and } \mathtt{i} < \mathtt{MAX\_ITERATIONS}$}{
    {\textbf{initialize}} $ \mathcal{S}_{i+1} \defeq \mathtt{removeStopSetSubsets}(\mathcal{S}_i, \: \mathtt{STOP\_SET})$\;
    {\textbf{update}} $ \mathcal{S}_{i+1} = \mathtt{refine}_{a, b}(\mathcal{S}_{i+1})$\;
    {\textbf{update}} $\mathtt{i} = \mathtt{i} + 1$\;
    
 }

 \medskip

 {\textbf{output}}: $\mathcal{S}_i$\;

\end{algorithm}

\medskip
\begin{remark}
    The sets $\mathcal{S}_i$ in \nameref{algo:extended_refinement_algorithm} differ slightly from the sets $\mathcal{T}_i$ in the refinement sequence, since $\mathcal{S}_i$ does not contain any pairs $(T, P)$ satisfying $T \subseteq \mathtt{STOP\_SET}$. 
\end{remark}

We now formally prove that the extended refinement algorithm gives a set of coverings of the desired solution set $\mathcal{D}_{a, b}$. First we prove some technical lemmas.

\begin{lemma}[\textbf{Characterization of $\mathcal{D}_{a, b}$ by Covering Parameters}]
\label{lemma:succ_min_extended}
    Suppose $(Q_1, Q_2, Q_3) \in \mathcal{V} \times \mathcal{V} \times \mathcal{V}$. Then $(Q_1, Q_2, Q_3) \in \mathcal{D}_{a, b}$ if and only if for each $i \in \{1, 2, 3\}$ there exists a successive minima sequence $S_i \defeq (S_{ij})_{j=1}^{\infty}$ of $Q_i$ so that for all $j \ge 1$, the following properties hold: 
    \begin{enumerate}
        \item[1)] $(\#S_{1j}, \#S_{2j}, \#S_{3j}) \in \mathcal{L}_{a,b}$,
        \item[2)] $Q_i(S_{ij}) \in \{\emptyset,  \{m_j\}\}$ for some $m_j \in \ZZ_{> 0}$, and 
        \item[3)] $S_{ij} \in \minimum_{\#S_{ij}}(\Z^{\oplus 2}_{\tilde{*}} \setminus (S_{i1} \union \cdots \union S_{i(j-1)}))$.
    \end{enumerate}
\end{lemma}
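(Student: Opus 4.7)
The plan is to translate the theta series identity defining $\mathcal{D}_{a,b}$ into an identity on strongly primitive representation numbers via the earlier lemma relating $\uptheta_Q$ and $\uptheta^{\tilde{*}}_Q$ (noting $\tfrac{a}{a+b} + \tfrac{b}{a+b} - 1 = 0$), and then apply \autoref{lemma:linear} pointwise in $m$ to decompose the triple $(r^{\tilde{*}}_{Q_1}(m), r^{\tilde{*}}_{Q_2}(m), r^{\tilde{*}}_{Q_3}(m))$ as a $\ZZ_{\ge 0}$-combination of elements of $\mathcal{L}_{a,b}$.

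For the direction ($\Longleftarrow$), I would fix $m > 0$ and group the indices $j$ by the level $m_j = m$. Since the sequence $S_i$ partitions $\ZZ^{\oplus 2}_{\tilde{*}}$ (condition 1 of \autoref{def:succ_min_set}) and condition 2 forces $Q_i$ to equal $m$ on every vector of $S_{ij}$ with $m_j = m$, we obtain $r^{\tilde{*}}_{Q_i}(m) = \sum_{j:\, m_j = m} \#S_{ij}$. By hypothesis each triple $(\#S_{1j}, \#S_{2j}, \#S_{3j}) \in \mathcal{L}_{a,b}$ satisfies $\tfrac{a}{a+b}x + \tfrac{b}{a+b}y = z$, so summing yields the desired identity on strongly primitive representation numbers; the relating lemma then gives $(Q_1, Q_2, Q_3) \in \mathcal{D}_{a,b}$.

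For ($\Longrightarrow$), I would start with a base successive minima sequence of singletons $(\{\vx_{ik}\})_{k=1}^\infty$ for each $Q_i$ obtained from \autoref{lemma:succ_min}. The hypothesis $(Q_1, Q_2, Q_3) \in \mathcal{D}_{a,b}$ (after reducing to strongly primitive form) says the triple $(r^{\tilde{*}}_{Q_1}(m), r^{\tilde{*}}_{Q_2}(m), r^{\tilde{*}}_{Q_3}(m))$ solves the diophantine equation for every $m > 0$, so \autoref{lemma:linear} provides nonnegative integers $c_1(m), c_2(m), c_3(m)$ writing this triple as $\sum_{t=1}^3 c_t(m)\, L_t$. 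Enumerating the levels $m$ in increasing order, for each $m$ I would partition the contiguous block of the base singleton sequence at level $m$ (in each $i$) into $c_1(m) + c_2(m) + c_3(m)$ consecutive subblocks whose $i$-th cardinalities are the coordinates $L_{t,i}$ (allowing $S_{ij} = \emptyset$ when $L_{t,i} = 0$), and index these subblocks by a shared $j$ across $i \in \{1,2,3\}$.

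The main obstacle is verifying condition 3, that $S_{ij} \in \minimum_{\#S_{ij}}(\ZZ^{\oplus 2}_{\tilde{*}} \setminus (S_{i1} \cup \cdots \cup S_{i(j-1)}))$; this follows immediately from \autoref{corollary:succ_min_n}, because each $S_{ij}$ is by construction a contiguous block of the base singleton successive minima sequence, which is exactly the situation covered by the corollary (with the empty block in $\minimum_0 = \{\emptyset\}$ handled vacuously). Conditions 1 and 2 of the statement are immediate from the construction, and the partitioning property in \autoref{def:succ_min_set} is inherited from the base sequence, completing the proof.
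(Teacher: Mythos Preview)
Your proposal is correct and follows essentially the same approach as the paper: in both directions you pass to strongly primitive representation numbers, invoke \autoref{lemma:linear} level-by-level, and for ($\Longrightarrow$) you build the $S_{ij}$ by chopping a singleton successive minima sequence (from \autoref{lemma:succ_min}) into contiguous blocks of sizes dictated by the $\mathcal{L}_{a,b}$ decomposition, with condition~3 supplied by \autoref{corollary:succ_min_n}. The only cosmetic difference is that the paper phrases the level-$m$ grouping via explicit contiguous index ranges $k_m \le j \le \ell_m$, whereas you sum over $\{j : m_j = m\}$ directly; these are equivalent.
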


\begin{proof}
($\Longrightarrow$) By \autoref{lemma:succ_min}, for each $i \in \{1, 2, 3\}$ there exists a sequence $S'_i \defeq (\vec{s}_{ij})_{j=1}^{\infty}$ of vectors in $\ZZ^{\oplus 2}_{\tilde{*}}$ such that $(\{\vec{s}_{ij}\})_{j=1}^{\infty}$ is a successive minima sequence satisfying  
\[\vec{s}_{i(j+1)} \in \minimum(\Z^{\oplus 2}_{\tilde{*}} \setminus \{\vec{s}_{i1}, \ldots, \vec{s}_{ij}\}) \text{ for all } j \ge 1. \]

Fix $m \in \Z_{>0}$. The elements of the $\Z^{\oplus 2}_{\tilde{*}}$-preimage of $m$ under $Q_i$ can be viewed as a contiguous subsequence of $S'_i$ with $r^{\tilde{*}}_{Q_i}(m)$ elements. Therefore for each $i \in \{1, 2, 3\}$, there exist indices $k_i \defeq k_{i, m}$ and $\ell_i \defeq \ell_{i, m}$ such that 
\[Q_i\left(\{\vs_{ij}\}_{j=k_i}^{\ell_i}\right) \in \{\emptyset, \{m\}\} \quad \text{ and } \quad r^{\tilde{*}}_{Q_i}(m) = \ell_i - k_i + 1.\]

By examining the coefficient of $q^m$ in \autoref{eqn:rational_lin}, we see that the triple $(r^{\tilde{*}}_{Q_1}(m), r^{\tilde{*}}_{Q_2}(m), r^{\tilde{*}}_{Q_3}(m))$ satisfies \autoref{eqn:linear_dio_eqn}. Therefore by \autoref{lemma:linear}, it must be a $\ZZ_{\ge 0}$-linear combination of the elements of $\mathcal{L}_{a, b} = \{L_1, L_2, L_3\}$,  i.e., there exist non-negative integers $c_1, c_2, c_3 \in \Z_{\ge 0}$ so that
\[(r^{\tilde{*}}_{Q_1}(m), r^{\tilde{*}}_{Q_2}(m), r^{\tilde{*}}_{Q_3}(m)) = c_1 L_1 + c_2 L_2 + c_3 L_3 =  c_1 \cdot (1, 1, 1) + c_2 \cdot (a+b, 0, a) + c_3 \cdot (0, a+b, b).\]

 Next we partition the subsequence $(\vs_{ij})_{j=k_i}^{\ell_i}$ of $S_i'$ into a sequence $(X_{m, n; \; i})_{n=1}^{c_1 + c_2 + c_3}$ of $c_1 + c_2 + c_3$ (possibly empty) sets as follows: start from the beginning of the subsequence $(\vs_{ij})_{j=k_i}^{\ell_i}$, and create a sequence of $c_1$ sets 
\begin{equation}
\label{eqn:1_to_c1}
 \left(X_{m, n; \; i} \defeq \{\vec{s_{ij}}\}_{j=k_i + (n - 1) L_{1, i}}^{k_i + n L_{1, i} - 1}\right)_{n=1}^{c_1}   
\end{equation}
 each having cardinality $L_{1, i}$, then create a sequence of $c_2$ sets 
\begin{equation}
\label{eqn:c1plus1_to_c1plusc2}
    \left(X_{m, n; \; i} \defeq \{\vec{s_{ij}}\}_{j=k_i + c_1L_{1, i} + (n - 1) L_{2, i}}^{k_i + c_1 L_{1, i} +n L_{2, i} - 1}\right)_{n=c_1 + 1}^{c_1 + c_2}
\end{equation}
 each having cardinality $L_{2, i}$, and finally create a sequence of $c_3$ sets 
\begin{equation}
\label{eqn:c1plusc2plus1_to_c1plusc2plusc3}
    \left(X_{m, n; \; i} \defeq \{\vec{s_{ij}}\}_{j=k_i + c_1L_{1, i} +  c_2L_{2, i} + (n-1)L_{3, i}}^{k_i + c_1 L_{1, i} +c_2 L_{2, i} + n L_{3, i} - 1}\right)_{n=c_1 + c_2 + 1}^{c_1 + c_2 + c_3}
\end{equation}
 each having cardinality $L_{3, i}$. (Recall from \autoref{def:linset} that we denote the $i'$-th coordinate of $L_{j'}$ by $L_{j', i'}$. For a more explicit definition of $X_{m, n; \; i}$, see \autoref{subsec:explicit_indexing}.)  

\smallskip
Finally, we define the sequence $S_i = (S_{ij})_{j=1}^{\infty}$ by arranging the sets $X_{m, n; \; i}$ by the indexing pair $(m, n)$, in increasing lexicographic order. (I.e. $S_{ij} \defeq X_{m, n; \; i}$ where $X_{m, n; \; i}$ is the $j$-th set under this ordering.) 

\smallskip
The sequences $S_i$ satisfy property 1) since by \autoref{eqn:1_to_c1}, \autoref{eqn:c1plus1_to_c1plusc2}, and \autoref{eqn:c1plusc2plus1_to_c1plusc2plusc3}, we have

\[(\#X_{m, n;\; 1}, \#X_{m, n;\; 2}, \#X_{m, n;\; 3}) = \begin{cases}
    (L_{1, 1}, L_{1, 2}, L_{1, 3}) = L_1 & \text{ if } 1 \le n \le c_1, \\
    (L_{2, 1}, L_{2, 2}, L_{2, 3}) = L_2 & \text{ if } c_1 + 1 \le n \le c_1 + c_2,  \\
    (L_{3, 1}, L_{3, 2}, L_{3, 3}) = L_3 & \text{ if } c_1 + c_2 + 1 \le n \le c_1 + c_2 + c_3. \\ 
\end{cases}\]

The sequences $S_i$ also satisfy property 2) since $Q_i (X_{m, n; \; i}) =  \{m\}$ (if non-empty) or $\emptyset$. By \autoref{corollary:succ_min_n} the sequences $S_i$ satisfy property 3), as desired. 

\smallskip
($\Longleftarrow$) Suppose the sequences $S_i$ satisfying properties 1) - 3) exist. Fix $m \in \ZZ_{> 0}$. Then there exist indices $k_m, \ell_m$ so that \[Q_i(S_{ij}) \in \{\emptyset, \{m\}\} \Longleftrightarrow k_m \le j \le \ell_m,\]
since $S_i$ is a successive minima sequence and by property 2). Furthermore, since the sequence $S_i$ is a successive minima sequence, every vector $\vec{x} \in \ZZ^{\oplus 2}_{\tilde{*}}$ that satisfies $Q_i(\vec{x}) = m$ must appear in exactly one $S_{ij}$ where $k_m \le j \le \ell_m$, showing that \[\sum\limits_{j = k_m}^{\ell_m} \# S_{ij} = r_{Q_i}^{\tilde{*}}(m).\]   
Therefore
\[(r_{Q_1}^{\tilde{*}}(m), r_{Q_2}^{\tilde{*}}(m), r_{Q_3}^{\tilde{*}}(m)) = \sum\limits_{j=k_m}^{\ell_m} (\# S_{1j}, \#S_{2j}, \#S_{3j}),\]
and by property 1) we have $(\# S_{1j}, \#S_{2j}, \#S_{3j}) \in \mathcal{L}_{a, b}$. Hence the sum is is a $\ZZ_{\ge 0}$-linear combination of elements of $\mathcal{L}_{a, b}$, which by \autoref{lemma:linear} satisfies the equation
\[\frac{a}{a+b} r_{Q_1}^{\tilde{*}}(m) + \frac{b}{a+b} r_{Q_2}^{\tilde{*}}(m) =  r_{Q_3}^{\tilde{*}}(m),\]
    proving the lemma.  
\end{proof}

\begin{lemma}[\textbf{Refinement Sequence Covering Parameters}]
\label{lemma:ref_cov_property}
    Suppose $(Q_1, Q_2, Q_3) \in \mathcal{D}_{a, b}$, and for each $i \in \{1, 2, 3\}$ let $S_i$  be a successive minima sequence of $Q_i$ as defined in \autoref{lemma:succ_min_extended}. Then for each $k \in \ZZ_{\ge 0}$, there exists some $(T, P) \in \mathcal{T}_k$  where  $P = ((S_{1j})_{j=1}^{k}, (S_{2j})_{j=1}^{k}, (S_{3j})_{j=1}^{k})$. 
\end{lemma}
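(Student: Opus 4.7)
The plan is to prove this by induction on $k$, using the previous \autoref{lemma:succ_min_extended} to supply exactly the data that the refinement process consumes at each step.

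For the base case $k=0$, we take $(T, P) = (T_0, P_0) = (\mathcal{V} \times \mathcal{V} \times \mathcal{V}, \,((), (), ()))$, which lies in $\mathcal{T}_0$ by \autoref{def:refinement_sequence}. The sequences of length zero trivially match $((S_{1j})_{j=1}^{0}, (S_{2j})_{j=1}^{0}, (S_{3j})_{j=1}^{0})$, and $(Q_1, Q_2, Q_3) \in \mathcal{V} \times \mathcal{V} \times \mathcal{V} = T_0$.

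For the inductive step, assume there exists $(T, P) \in \mathcal{T}_k$ with $P = ((S_{1j})_{j=1}^{k}, (S_{2j})_{j=1}^{k}, (S_{3j})_{j=1}^{k})$ and $(Q_1, Q_2, Q_3) \in T$. I would then apply $\mathtt{refine}_{a, b}$ to this pair, and pick the specific indices in the union in \autoref{eqn:Mab} given by
\[
(n_1, n_2, n_3) \defeq (\#S_{1,k+1}, \#S_{2,k+1}, \#S_{3,k+1}), \qquad \cX \defeq S_{1, k+1}, \quad \cY \defeq S_{2, k+1}, \quad \cZ \defeq S_{3, k+1}.
\]
Property 1) of \autoref{lemma:succ_min_extended} guarantees $(n_1, n_2, n_3) \in \mathcal{L}_{a, b}$, and property 3) guarantees that $\cX, \cY, \cZ$ belong to the corresponding $\minimum_{n_i}$ sets appearing in the index of the union, so the pair $(T_{\cX, \cY, \cZ}, P_{\cX, \cY, \cZ; \, T})$ is a legitimate element of $\mathcal{M}_{a, b}((T, P)) \subseteq \mathcal{T}_{k+1}$. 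By construction, $P_{\cX, \cY, \cZ; \, T}$ is precisely $((S_{1j})_{j=1}^{k+1}, (S_{2j})_{j=1}^{k+1}, (S_{3j})_{j=1}^{k+1})$.

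It remains to verify that $(Q_1, Q_2, Q_3) \in T_{\cX, \cY, \cZ} = T \cap \mathcal{K}_{\cX, \cY, \cZ} \cap \mathcal{Q}_{\cX, \cY, \cZ}$. Membership in $T$ is the inductive hypothesis. Membership in $\mathcal{K}_{\cX, \cY, \cZ}$ follows from the $\kset$-set characterization in \autoref{def:kset_set}: since $S_i$ is a successive minima sequence for $Q_i$ and $S_{i,k+1} \in \minimum_{n_i}(\ZZ^{\oplus 2}_{\tilde{*}} \setminus (S_{i,1} \cup \cdots \cup S_{i,k}))$ with $Q_i(S_{i,k+1})$ realizing the next minimum value, each $Q_i$ satisfies the defining conditions of $\kset(\ZZ^{\oplus 2}_{\tilde{*}}; S_{i,1}, \ldots, S_{i,k+1})$. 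Membership in $\mathcal{Q}_{\cX, \cY, \cZ}$ is exactly property 2) of \autoref{lemma:succ_min_extended} applied at $j = k+1$, which produces the common value $m_{k+1}$ across the three components.

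The only subtle point I expect to encounter is ensuring that the common value $m_{k+1}$ really is shared across all three coordinates at index $k+1$ (not merely that each $Q_i(S_{i,k+1})$ is a singleton). But this is built into the formulation of property 2) of \autoref{lemma:succ_min_extended}, where the $m_j$ does not carry an $i$-subscript. Aside from this bookkeeping, everything else is a direct unwinding of definitions.
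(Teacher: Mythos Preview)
Your proof is correct and follows the same inductive strategy as the paper: the base case and the use of properties 1) and 3) of \autoref{lemma:succ_min_extended} to select $(n_1,n_2,n_3)$ and $(\cX,\cY,\cZ)$ in the refinement union are exactly what the paper does. You go further than the lemma requires by also carrying the assertion $(Q_1,Q_2,Q_3)\in T$ through the induction (using property 2) and the $\kset$-set definition); the paper omits this here and instead establishes it separately in Part 2 of the subsequent correctness theorem, so your strengthened induction simply absorbs that later argument.
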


\begin{proof}
    We proceed by induction on $k$. For $k = 0$, we have $((S_{1j})_{j=1}^{k}, (S_{2j})_{j=1}^{k}, (S_{3j})_{j=1}^{k}) = ((), (), ())$ and since $\mathcal{T}_0 = \{(\mathcal{V} \times \mathcal{V} \times \mathcal{V}, ((), (), ()))\}$, the base case holds.

    Next we establish the lemma for $k = k'+1$ assuming it holds for $k = k'$, that is, we know there exists some $(T, P) \in \mathcal{T}_{k'}$ so that $P = ((S_{1j})_{j=1}^{k'}, (S_{2j})_{j=1}^{k'}, (S_{3j})_{j=1}^{k'})$. Note that by property 1) of \autoref{lemma:succ_min_extended}, we have $(n_1, n_2, n_3) \defeq (\#S_{1(k'+1)}, \#S_{1(k'+1)}, \#S_{1(k'+1)}) \in \mathcal{L}_{a, b}$, and by property 3) of \autoref{lemma:succ_min_extended} we have

    \[S_{1(k'+1)}  \in \minimum_{n_1}(\Z^{\oplus 2}_{\tilde{*}} \setminus (S_{11} \union \cdots \union S_{1k'})),\]
    \[S_{2(k'+1)} \in \minimum_{n_2}(\Z^{\oplus 2}_{\tilde{*}} \setminus (S_{21} \union \cdots \union S_{2k'})),\]
    \[S_{3(k'+1)} \in \minimum_{n_3}(\Z^{\oplus 2}_{\tilde{*}} \setminus (S_{31} \union \cdots \union S_{3k'})).\] 
    Thus in \autoref{eqn:Mab} of the \autoref{def:refinement_procedure}, we can let \[\mathcal{X} \defeq S_{1(k'+1)}, \quad \mathcal{Y} \defeq S_{2(k'+1)}, \quad \mathcal{Z} \defeq S_{3(k'+1)},\] which gives the corresponding refinement $T_{\cX, \cY, \cZ}$ with associated covering parameter  \[P = P_{\cX, \cY, \cZ; \; T} = ((S_{1j})_{j=1}^{k'+1}, (S_{2j})_{j=1}^{k'+1}, (S_{3j})_{j=1}^{k'+1}),\]
    as desired.
\end{proof}

\medskip
\begin{theorem}[\textbf{Extended Refinement Algorithm Correctness}]
    Suppose $a, b \in \ZZ_{\ge 0}$ with $(a, b) \neq (0, 0)$, and $(\mathcal{T}_k)_{k=0}^{\infty}$ is the refinement sequence associated to $(a, b)$, and let \[\mathcal{U}_k \defeq \bigcup\limits_{(T, P) \in \mathcal{T}_k} T\] denote the union of the refined sets in each $\mathcal{T}_k$. Then we have \[\mathcal{U}_0 \supseteq \mathcal{U}_1 \supseteq \ldots  \supseteq \mathcal{D}_{a, b}.\]
    \end{theorem}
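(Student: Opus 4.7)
The plan is to establish the two chains of inclusions separately: first the monotonicity $\mathcal{U}_k \supseteq \mathcal{U}_{k+1}$, and then the containment $\mathcal{U}_k \supseteq \mathcal{D}_{a,b}$ for every $k \ge 0$. Both are essentially bookkeeping consequences of the refinement construction and the lemmas proved earlier in this section, so the proof is short; the subtle point is unpacking exactly why the inductive construction in \autoref{lemma:ref_cov_property} actually places a given triple $(Q_1,Q_2,Q_3)\in\mathcal{D}_{a,b}$ inside the refined set $T$, not merely next to a covering parameter $P$ of the correct shape.

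For monotonicity, I would fix $k \ge 0$ and any $(T,P)\in\mathcal{T}_k$ and inspect \autoref{def:refinement_procedure}: every refined set appearing in $\mathcal{M}_{a,b}((T,P))$ has the form
\[
T_{\mathcal{X},\mathcal{Y},\mathcal{Z}} \;=\; T \cap \mathcal{K}_{\mathcal{X},\mathcal{Y},\mathcal{Z}} \cap \mathcal{Q}_{\mathcal{X},\mathcal{Y},\mathcal{Z}} \;\subseteq\; T.
\]
Hence $\bigcup_{\mathcal{M}_{a,b}((T,P))} T_{\mathcal{X},\mathcal{Y},\mathcal{Z}} \subseteq T$, and taking the union over all $(T,P)\in\mathcal{T}_k$ yields $\mathcal{U}_{k+1}\subseteq\mathcal{U}_k$, chaining to $\mathcal{U}_0\supseteq\mathcal{U}_1\supseteq\cdots$.

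For the containment $\mathcal{D}_{a,b}\subseteq\mathcal{U}_k$, I would fix $(Q_1,Q_2,Q_3)\in\mathcal{D}_{a,b}$ and invoke \autoref{lemma:succ_min_extended} to obtain successive minima sequences $S_i=(S_{ij})_{j\ge 1}$ of $Q_i$ satisfying properties 1)--3) there. I then apply \autoref{lemma:ref_cov_property} to produce, for each $k$, a pair $(T,P)\in\mathcal{T}_k$ whose covering parameter is $P=((S_{1j})_{j=1}^k,(S_{2j})_{j=1}^k,(S_{3j})_{j=1}^k)$. The key step, which is the only one requiring care, is to verify that $(Q_1,Q_2,Q_3)\in T$ itself; this is an induction mirroring the proof of \autoref{lemma:ref_cov_property}: at each stage one must check that $(Q_1,Q_2,Q_3)$ lies in $\mathcal{K}_{\mathcal{X},\mathcal{Y},\mathcal{Z}}$ (which follows from the characterization of $\kset$ in \autoref{lemma:Kset} together with property 3) of \autoref{lemma:succ_min_extended}) and in $\mathcal{Q}_{\mathcal{X},\mathcal{Y},\mathcal{Z}}$ (which is property 2) of \autoref{lemma:succ_min_extended}). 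Once this is in hand, $(Q_1,Q_2,Q_3)\in T\subseteq\mathcal{U}_k$, completing the proof.

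The main potential obstacle is a bookkeeping one: making sure that the $T$ identified in \autoref{lemma:ref_cov_property} really does contain our fixed triple and not merely a triple with the same $k$-th successive minima data. I would handle this by explicitly strengthening the inductive hypothesis in the proof of \autoref{lemma:ref_cov_property} to carry along the assertion $(Q_1,Q_2,Q_3)\in T$; the base case holds since $\mathcal{T}_0=\{(\mathcal{V}\times\mathcal{V}\times\mathcal{V},\,((),(),()))\}$, and the inductive step reduces to the two membership checks above.
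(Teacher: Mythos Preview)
Your proposal is correct and follows essentially the same approach as the paper: both split into the monotonicity $\mathcal{U}_{k}\supseteq\mathcal{U}_{k+1}$ (immediate from $T_{\mathcal{X},\mathcal{Y},\mathcal{Z}}\subseteq T$ in \autoref{def:refinement_procedure}) and the containment $\mathcal{D}_{a,b}\subseteq\mathcal{U}_k$ via \autoref{lemma:succ_min_extended} and \autoref{lemma:ref_cov_property}, with the same induction on $k$ verifying $(Q_1,Q_2,Q_3)\in T_k$ through membership in $\mathcal{K}_{\cdot}$ and $\mathcal{Q}_{\cdot}$. The only minor remark is that membership in $\mathcal{K}_{\mathcal{X},\mathcal{Y},\mathcal{Z}}$ follows directly from the \emph{definition} of the $\kset$-set (since each $S_i$ is a successive minima sequence of $Q_i$), rather than from \autoref{lemma:Kset} and property~3); property~3) is what drives \autoref{lemma:ref_cov_property}, not the $\kset$-membership check.
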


\begin{proof} \textbf{Part 1)} We first show that $\mathcal{U}_{k+1} \subseteq \mathcal{U}_k$ by showing that for any $(T, P) \in \cT_{k+1}$, there exists $(T', P') \in \cT_k$ with $T \subseteq T'$. Suppose $(T, P) \in \cT_{k+1}$ with $P \defeq ((X_j)_{j=1}^{k+1}, (Y_j)_{j=1}^{k+1}, (Z_j)_{j=1}^{k+1})$. Then there must exist $(T', P') \in \cT_k$ with $P' \defeq ((X_j)_{j=1}^{k}, (Y_j)_{j=1}^{k}, (Z_j)_{j=1}^{k})$. But by \autoref{def:refinement_procedure} this gives us \[T = T' \intersect \cK_{X_{k+1}, Y_{k+1}, Z_{k+1}} \intersect \cQ_{X_{k+1}, Y_{k+1}, Z_{k+1}}\] and so $T \subseteq T'$, as desired.

\textbf{Part 2)} Now we show that $\mathcal{U}_k \supseteq \mathcal{D}_{a, b}$. Let $(Q_1, Q_2, Q_3) \in \mathcal{D}_{a, b}$, and for each $i \in \{1, 2, 3\}$ let $S_i$  be a successive minima sequence of $Q_i$ as defined in \autoref{lemma:succ_min_extended}. By \autoref{lemma:ref_cov_property}, for each $k \in \ZZ_{\ge 0}$ there exists $(T_k, P_k) \in \cT_k$ with $P_k = ((S_{1j})_{j=1}^k, (S_{2j})_{j=1}^k, (S_{3j})_{j=1}^k)$. We now use induction on $k$ to show that $T_k$ contains $(Q_1, Q_2, Q_3)$. 

For $k = 0$ we have $\cT_0 = \{(T_0, P_0)\}$. Since $(Q_1, Q_2, Q_3) \in \mathcal{D}_{a, b} \subseteq T_0 = \mathcal{V} \times \mathcal{V} \times \mathcal{V}$, the base case holds. 

Now we show the theorem is true for $k = k'+1$  assuming that it is true for $k = k'$, i.e., we know $(Q_1, Q_2, Q_3) \in T_{k'}$ for $(T_{k'}, P_{k'}) \in \mathcal{T}_{k'}$ where $P_{k'} = ((S_{1j})_{j=1}^{k'}, (S_{2j})_{j=1}^{k'}, (S_{3j})_{j=1}^{k'})$. Now consider $(T_{k'+1}, P_{k'+1})$ where $P_{k'+1} = ((S_{1j})_{j=1}^{k'+1}, (S_{2j})_{j=1}^{k'+1}, (S_{3j})_{j=1}^{k'+1})$. By \autoref{def:refinement_procedure} we have
\[T_{k'+1} = \left(T_{k'} \intersect \cK_{S_{1(k'+1)}, S_{2(k'+1)}, S_{3(k'+1)}} \intersect \cQ_{S_{1(k'+1)}, S_{2(k'+1)}, S_{3(k'+1)}}\right)\]
where 
\[
\cK_{S_{1(k'+1)}, S_{2(k'+1)}, S_{3(k'+1)}} \defeq \kset(\Z^{\oplus 2}_{\tilde{*}}; S_{11}, \ldots, S_{1(k'+1)}) \times \kset(\Z^{\oplus 2}_{\tilde{*}}; S_{21}, \ldots, S_{2(k'+1)}) \times \kset(\Z^{\oplus 2}_{\tilde{*}}; S_{31}, \ldots, S_{3(k'+1)}),\]
\[\mathcal{Q}_{S_{1(k'+1)}, S_{2(k'+1)}, S_{3(k'+1)}}  \defeq \left\{(Q_1', Q_2', Q_3') \in \overline{\mathcal{V}} \times \overline{\mathcal{V}} \times \overline{\mathcal{V}} \ \middle\vert \begin{array}{l}
                 \qquad\quad \text{there exists } m \in \ZZ_{\ge 0} \text{ so that } \\
                Q_i'(S_{i(k'+1)}) \in \{\emptyset, \{m\}\} \text{ for each } i \in \{1, 2, 3\}\\ 
                \end{array}\right\}.\]

By definition of $\kset$-sets, we have $Q_1 \in \kset(\Z^{\oplus 2}_{\tilde{*}}; S_{11}, \ldots, S_{1(k'+1)})$, $Q_2 \in \kset(\Z^{\oplus 2}_{\tilde{*}}; S_{21}, \ldots, S_{2(k'+1)})$, $Q_3 \in \kset(\Z^{\oplus 2}_{\tilde{*}}; S_{31}, \ldots, S_{3(k'+1)})$, and thus $(Q_1, Q_2, Q_3) \in \cK_{S_{1(k'+1)}, S_{2(k'+1)}, S_{3(k'+1)}}$. By property 2) of \autoref{lemma:succ_min_extended}, we have $Q_1(S_{1(k'+1)})$, $Q_2(S_{2(k'+1)})$, $Q_3(S_{3(k'+1)}) \in \{\emptyset, \{m\}\}$ for some $m \in \ZZ_{\ge 0}$, and thus $(Q_1, Q_2, Q_3) \in \mathcal{Q}_{S_{1(k'+1)}, S_{2(k'+1)}, S_{3(k'+1)}}$. By the induction hypothesis, we have $(Q_1, Q_2, Q_3) \in T_{k'}$, which implies \[(Q_1, Q_2, Q_3) \in \left(T_{k'} \intersect \cK_{S_{1(k'+1)}, S_{2(k'+1)}, S_{3(k'+1)}} \intersect \cQ_{S_{1(k'+1)}, S_{2(k'+1)}, S_{3(k'+1)}}\right) = T_{k'+1}.\]  
We finally note that  $(Q_1, Q_2, Q_3) \in T_i \subseteq \mathcal{U}_i$, thus $\mathcal{U}_i \supseteq \mathcal{D}_{a, b}$ as desired.
\end{proof}

\subsection{Algorithmic Considerations} 
\label{sec:implementation}
In this subsection we describe key implementation details and algorithmic aspects for this project. 

\subsubsection{Implementation}
We implemented \nameref{algo:extended_refinement_algorithm} in \texttt{SageMath} (\cite{centrallyAntisymmetricPolyCone}, \cite{sagemath}). \texttt{SageMath} has a polyhedron interface to cdd (and several other packages), however we chose to build a custom polyhedral cone library for two key reasons: First, the standard polyhedra packages in \texttt{SageMath} do not accept open halfspace conditions, but the $\text{GL}_2(\ZZ)$-reduction domain $\mathcal{V}$ requires strict inequalities in its definition. Second, without caching the edge computations from each stage of the refinement, the default class becomes inefficient and does not scale well as the dimension of the problem increases (e.g. ternary quadratic forms as in Schiemann's work or $n$-term linear relations). Therefore we chose to implement it with these additional use cases in mind. For reference, \cite[\S4, p27-33]{rydell2020three} contains a treatment of the theory of polyhedral cones required for our implementation. 

\subsubsection{Computing Refinement Objects}
For clarity, we summarize how to compute objects from the refinement algorithmically: 

\medskip
\begin{enumerate}
    \item[1)] \textbf{$\mathbf{\cK_{\cX, \cY, \cZ}}$}: This is a cross-product of three $\kset$-sets. We know each $\kset$-set is a polyhedral cone that can be explicitly computed using \autoref{lemma:Kset}. Then $\cK_{\cX, \cY, \cZ}$ is  the direct sum of these polyhedral cones.  
    \item[2)] \textbf{$\cQ_{\cX, \cY, \cZ}$}: There are three possibilites on $(\# \cX, \# \cY, \# \cZ)$, each requiring that $Q_1(\cX), Q_2(\cY), Q_3(\cZ) \in \{\emptyset, \{m\}\}$ for some $m \in \RR_{> 0}$:
    \subitem i) If $(\# \cX, \# \cY, \# \cZ) = (1, 1, 1)$, then  $Q_1(\cX) = Q_2(\cY) = Q_3(\cZ)$. 
    \subitem ii) If $(\# \cX, \# \cY, \# \cZ) = (a+b, 0, a)$, then  $Q_1(\cX) = Q_3(\cZ)$ and $Q_2$ is arbitrary.
    \subitem iii) If $(\# \cX, \# \cY, \# \cZ) = (0, a+b, b)$, then $Q_2(\cY) = Q_3(\cZ)$ and $Q_1$ is arbitrary.
    
    In each case, the set of forms $(Q_1 ,Q_2, Q_3)$ satisfying these equations is a polyhedral cone. 

    \item[3)] \textbf{$\mathbf{\minimum}_n(X)$}:  See \autoref{rem:computing_min_n} for details.
\end{enumerate}
See \nameref{sec:appendix} for examples of explicit computations of different refinement objects. 

\subsubsection{Termination Conditions}

In Schiemann's original refinement algorithm (\cite{schiemann1997ternary}), termination occurs when a polyhedral cone is contained in the diagonal $\Delta' \subset \R^6 \times \R^6$. This is because in Schiemann's case, it was widely believed that the only reduced solutions $(Q_1, Q_2)$ of $\uptheta_{Q_1} = \uptheta_{Q_2}$ should satisfy $Q_1 = Q_2$.

For our three-term linear relations, there exist non-trivial solutions \underline{not} contained in the diagonal $\Delta \subset \R^3 \times \R^3 \times \R^3$. Therefore, for practical purposes, it is important to impose an additional terminating condition, which we include as $\mathtt{STOP\_SET}$ and $\mathtt{MAX\_ITERATIONS}$ in \nameref{algo:extended_refinement_algorithm}. 

\vspace{-10pt}
\section{Results}

In this section we present results obtained from implementing \nameref{algo:extended_refinement_algorithm}, proving \autoref{thm:theorem_1_0}.  

\medskip
Suppose $a, b \in \ZZ_{\ge 0}$ with $(a, b) \neq 0$ and $\gcd(a, b) = 1$. Then recall the normalized rational $3$-term linear relation  \[\frac{a}{a+b} \uptheta_{Q_1} + \frac{b}{a+b} \uptheta_{Q_2} = \uptheta_{Q_3}.\]  
We now give our results, grouped by the value of $a+b$.

\subsection{\texorpdfstring{$a + b = 1$}{a + b = 1}}
In this case $(a, b) = (1, 0)$ or $(0, 1)$, and by symmetry it suffices to consider only $(a, b) = (1, 0)$. Then the normalized rational 3-term linear relation is given by 
\begin{equation}
\label{eqn:bqf_theta_uniqueness}
  \uptheta_{Q_1} = \uptheta_{Q_3}.
\end{equation}
 We apply \nameref{algo:extended_refinement_algorithm} with $\mathtt{STOP\_SET} \defeq \Delta'$, where \[\Delta' \defeq \{(Q_1, Q_2, Q_3) \in \mathcal{V} \times \mathcal{V} \times \mathcal{V} \mid Q_1 = Q_3\}\] and $\mathtt{MAX\_ITERATIONS} \defeq 13$. Note that by \autoref{rem:linset_minimality}, we can use $\mathcal{L}_{1,0}'$ instead of $\mathcal{L}_{1, 0}$. 

 The algorithm terminates after $\mathtt{MAX\_ITERATIONS}$ iterations. By analyzing the pairs $(T, P)$ in $\mathcal{S}_{13}$ with $T \not\subseteq \Delta'$, we write $P = ((X_i)_{i=1}^{13}, (Y_i)_{i=1}^{13}, (Z_i)_{i=1}^{13})$, and notice that each such $P$ has at least one index $i'$ (with $1 \le i' \le 13$) where $\#Y_{i'} > 0$. However, since the coefficient of $\uptheta_{Q_2}$ in \autoref{eqn:bqf_theta_uniqueness} is $0$, choosing the set $Y_{i'}$ of strongly primitive vectors of $Q_2$ does not give us any additional information about the linear relation, because every such refinement can also be realized (with the same projection $\pi_{1, 3}: (Q_1, Q_2, Q_3) \mapsto (Q_1, Q_3)$) by removing the triple $(X_{i}', Y_{i}', Z_{i}')$ from $P$. By applying this logic repeatedly, we can ignore all such triples $(T, P)$ and assume $\#Y_i = 0$ for all $1 \le i \le 13$. Thus all the solutions are in the  $\mathtt{STOP\_SET} = \Delta'$, showing $Q_1 \sim_\ZZ Q_3$, as desired. 

\begin{corollary}
    Suppose $Q_1, Q_2, Q_3$ are positive-definite integer-valued  binary quadratic forms satisfying 
  \[\alpha_1 \uptheta_{Q_1} + \alpha_2 \uptheta_{Q_2} + \alpha_3 \uptheta_{Q_3} = 0,\]
   for some $\alpha_1, \alpha_2, \alpha_3 \in \RR$, where one of the ratios $\frac{\alpha_i}{\alpha_j} \not\in \QQ$ for some $1 \le i, j \le 3$ with $\alpha_j \neq 0$.  Then  ${Q_1} \sim_\ZZ {Q_2} \sim_\ZZ {Q_3}$.  
\end{corollary}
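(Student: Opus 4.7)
The plan is to combine two results already established in the paper: the Irrational 3-term Linear Relations lemma (\autoref{lemma:irrational}) and the binary quadratic form theta-uniqueness result proven immediately above in the $a+b=1$ case. The hypothesis of the corollary is identical to that of \autoref{lemma:irrational}, so the work is essentially just chaining these two facts together.

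First, I would invoke \autoref{lemma:irrational} directly: under the hypothesis that some ratio $\alpha_i/\alpha_j \notin \QQ$ with $\alpha_j \ne 0$, that lemma yields $\uptheta_{Q_1} = \uptheta_{Q_2} = \uptheta_{Q_3}$. This reduces the problem entirely to an equality of theta series, with no remaining real coefficients to manage.

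Next, I would apply the result established in the $a+b=1$ subsection. That subsection proved that whenever $\uptheta_{Q} = \uptheta_{Q'}$ for positive-definite integer-valued binary quadratic forms $Q, Q'$, one has $Q \sim_\ZZ Q'$. Applying this to the two pairs $(Q_1, Q_2)$ and $(Q_2, Q_3)$ gives $Q_1 \sim_\ZZ Q_2$ and $Q_2 \sim_\ZZ Q_3$, which by transitivity yields $Q_1 \sim_\ZZ Q_2 \sim_\ZZ Q_3$ as required.

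There is no substantive obstacle here; the corollary is essentially a bookkeeping consequence of the two prior results. The only thing worth verifying is that the uniqueness conclusion of the $a+b=1$ subsection is stated in a form applicable to arbitrary pairs, which it is: that subsection ran \nameref{algo:extended_refinement_algorithm} precisely to characterize all pairs satisfying $\uptheta_{Q_1} = \uptheta_{Q_3}$, and showed that after quotienting out the vacuous $Q_2$-data one is forced into the diagonal $\Delta'$, meaning $Q_1 \sim_\ZZ Q_3$. Thus the proof amounts to citing \autoref{lemma:irrational} to obtain equality of the three theta series, then invoking the $a+b=1$ uniqueness twice.
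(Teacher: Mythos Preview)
Your proposal is correct and follows essentially the same approach as the paper: invoke \autoref{lemma:irrational} to reduce to $\uptheta_{Q_1} = \uptheta_{Q_2} = \uptheta_{Q_3}$, then apply the $a+b=1$ uniqueness result pairwise to conclude $Q_1 \sim_\ZZ Q_2 \sim_\ZZ Q_3$.
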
    

\begin{proof}
    By \autoref{lemma:irrational}, it suffices to solve $\uptheta_{Q_1} = \uptheta_{Q_2} = \uptheta_{Q_3}$. However, we know from this subsection that any pairwise equality $\uptheta_{Q_i} = \uptheta_{Q_j}$ for $i \neq j$ implies  $Q_i \sim_\ZZ Q_j$, giving $Q_1 \sim_\ZZ Q_2 \sim_\ZZ Q_3$, as desired.
\end{proof}

\subsection{\texorpdfstring{$a + b = 2$}{a + b = 2}}
In this case $(a, b) = (1, 1)$. Then the normalized rational 3-term linear relation is given by 
\begin{equation}
\label{eqn:half_half}
  \frac{1}{2}\uptheta_{Q_1} + \frac{1}{2}\uptheta_{Q_2} = \uptheta_{Q_3}.
\end{equation}
We apply \nameref{algo:extended_refinement_algorithm} with $\mathtt{STOP\_SET} \defeq \Delta$ and $\mathtt{MAX\_ITERATIONS} \defeq 13$. The algorithm terminated after iteration $i = 6$, there were no remaining pairs $(T, P)$ since all $(T, P) \in \mathcal{S}_6$ satisfied $T \subseteq \Delta$. Therefore $Q_1 \sim_\ZZ Q_2 \sim_\ZZ Q_3$. 

\medskip
The following table summarizes the number of pairs $(T, P) \in \mathcal{S}_i$. 

\begin{center}
\begin{tabular}{ |c|c|c|c|c|c|c|c|}
  \hline
   & \multicolumn{7}{|c|}{State after iteration $i$}   \\
 \hline
 $i$ & $0$ & $1$ & $2$ & $3$  & $4$ & $5$ & $6$\\ 
 \hline 
 $\#\mathcal{S}_i$ & $1$ & $3$ & $9$ & $29$ & $58$ & $30$  & $0$ \\
 \hline
 $\# \{(T, P) \in \mathcal{S}_i \mid T \neq \emptyset\}$ & $1$ & $3$ & $9$ & $16$ & $6$ & $0$ & $0$ \\
 \hline
\end{tabular} 
\end{center}

\subsection{\texorpdfstring{$a + b = 3$}{a + b = 3}} 
In this case $(a, b) = (1, 2)$ or $(2, 1)$, and by symmetry it suffices to consider only $(a, b) = (1, 2)$. Then the normalized rational 3-term linear relation is given by 
\begin{equation}
\label{eqn:one_third_two_third}
\frac{1}{3}\uptheta_{Q_1} + \frac{2}{3}\uptheta_{Q_2} = \uptheta_{Q_3}.  
\end{equation}
We apply \nameref{algo:extended_refinement_algorithm} with $\mathtt{STOP\_SET} \defeq \Delta$ and $\mathtt{MAX\_ITERATIONS} \defeq 13$. The algorithm terminated after $\mathtt{MAX\_ITERATIONS}$ iterations. The following table summarizes the number of pairs $(T, P) \in \mathcal{S}_i$ after iteration $i$.
\begin{center}
\begin{tabular}{ |c|c|c|c|c|c|c|c|c|c|c|c|c|c|c|}
  \hline
   & \multicolumn{14}{|c|}{State after iteration $i$}   \\
 \hline
 $i$ & $0$ & $1$ & $2$ & $3$ & $4$  & $5$ & $6$ & $7$ & $8$ & $9$ & $10$ & $11$ & $12$ & $13$ \\ 
 \hline 
 $\#\mathcal{S}_i$ & $1$ & $3$ & $11$ & $21$ & $13$ & $24$ & $16$ & $48$ & $33$ & $57$ & $27$ & $77$ & $42$ & $287$   \\
 \hline 
$\# \{(T, P) \in \mathcal{S}_i \mid T \neq \emptyset\}$ & $1$ & $3$ & $5$ & $2$  & $1$ & $1$ & $1$ & $1$ & $1$ & $1$ & $1$ & $1$ & $3$ & $3$ \\ 
 \hline
\end{tabular} 
\end{center}

\bigskip
\noindent Analyzing the unique pair $(T, P) \in \mathcal{T}_4$ with $T \neq \emptyset$, we see that $T = \RR_{\ge 0} \vec{v}$ where \[\vec{v} \defeq (\nicefrac{1}{4}, \nicefrac{1}{4}, \nicefrac{1}{4}) \times (1, 1, 1) \times (\nicefrac{1}{4}, \nicefrac{3}{4}, 0) \in \mathcal{V} \times \mathcal{V} \times \mathcal{V}.\] This gives us a potential candidate for a non-trivial solution up to scaling and equivalence, namely
\[
        Q_1 \defeq x^2 + xy + y^2, \qquad 
        Q_2 \defeq 4(x^2 + xy + y^2),  \qquad 
        Q_3 \defeq x^2 + 3y^2. 
\]
 We now prove that this candidate indeed satisfies \autoref{eqn:one_third_two_third}.

\begin{lemma}[\textbf{Non-Trivial $3$-term Linear Relation}]
\label{lemma:non_trivial_relation_theorem}
    Suppose $Q_1, Q_2, Q_3$ are positive-definite integer-valued binary quadratic forms such that 
\[
        Q_1 \sim_\ZZ c(x^2 + xy + y^2), \qquad 
        Q_2 \sim_\ZZ 4c(x^2 + xy + y^2), \quad \text{ and } \quad 
        Q_3 \sim_\ZZ c(x^2 + 3y^2) 
\]
for some positive integer $c \in \Z_{>0}$. Then we have the following relation among their theta series: 
        \[\frac{1}{3} \uptheta_{Q_1} + \frac{2}{3} \uptheta_{Q_2} = \uptheta_{Q_3}.\]
\end{lemma}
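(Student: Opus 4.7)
The plan is to reduce the identity to a single relation among representation numbers and then prove that relation by recognizing the three forms as norm-counts in $\ZZ[\omega]$ for $\omega \defeq e^{2\pi i/3}$, where the unit $\omega$ gives a hidden symmetry on the residue classes mod $2$.

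First I would eliminate the parameter $c$. Because $r_{cQ}(m) = r_Q(m/c)$ when $c \mid m$ and zero otherwise, we have $\uptheta_{cQ}(z) = \uptheta_Q(cz)$, so the general relation follows from the $c=1$ case by the substitution $z \mapsto cz$. Hence it suffices to prove, for every $n \ge 0$,
$$\tfrac{1}{3}\, r_{Q_1^0}(n) + \tfrac{2}{3}\, r_{Q_2^0}(n) = r_{Q_3^0}(n), \qquad Q_1^0 \defeq x^2+xy+y^2,\ \ Q_2^0 \defeq 4Q_1^0,\ \ Q_3^0 \defeq x^2+3y^2.$$

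Second, I would translate all three representation counts into the ring $\ZZ[\omega]$. After the harmless change of variables $y \mapsto -y$, the norm form $N(u+v\omega) = u^2 - uv + v^2$ identifies $r_{Q_1^0}(n)$ with the number of $z \in \ZZ[\omega]$ of norm $n$, and $r_{Q_2^0}(n)$ with the number of such $z$ in the sublattice $2\ZZ[\omega]$. Since $\sqrt{-3} = 2\omega+1$, the inclusion $\ZZ[\sqrt{-3}] \hookrightarrow \ZZ[\omega]$ sends $a+b\sqrt{-3} \mapsto (a+b)+2b\omega$, whose image is exactly the set of $u+v\omega$ with $v$ even; so $r_{Q_3^0}(n)$ counts elements of norm $n$ satisfying this parity condition.

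Third, I would stratify by residue class in $\ZZ[\omega]/(2) = \{0,\,1,\,\omega,\,1+\omega\}$. Writing $N_r(n)$ for the number of $z \in \ZZ[\omega]$ of norm $n$ with $z \equiv r \pmod 2$, the three counts become
$$r_{Q_1^0}(n) = N_0 + N_1 + N_\omega + N_{1+\omega}, \qquad r_{Q_2^0}(n) = N_0, \qquad r_{Q_3^0}(n) = N_0 + N_1,$$
so the target identity is equivalent to $N_\omega + N_{1+\omega} = 2N_1$. This follows at once from the unit action: multiplication by $\omega \in \ZZ[\omega]^\times$ preserves norms and cyclically permutes the nonzero residues, because $\omega \cdot 1 = \omega$, $\omega^2 \equiv 1+\omega \pmod 2$ (from $\omega^2 = -1-\omega$), and $\omega(1+\omega) = \omega + \omega^2 = -1 \equiv 1 \pmod 2$. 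Hence $N_1 = N_\omega = N_{1+\omega}$, yielding the required equality.

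The only real content, and the step I expect to be most delicate, is the setup in the second paragraph: correctly identifying $\ZZ[\sqrt{-3}]$ as the index-$2$ sublattice of $\ZZ[\omega]$ cut out by the residue condition $v$ even, so that the three representation numbers become \emph{linear combinations of the same four residue counts} $N_r(n)$. Once the bookkeeping is in place, the rest of the argument is a short orbit count under the unit $\omega$; no generating-function machinery, modular-form identities, or class-field computation is needed.
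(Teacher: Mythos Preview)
Your proof is correct. Both your approach and the paper's exploit the same underlying symmetry---the order-$3$ unit $\omega$ in $\ZZ[\omega]$---but package it differently. The paper works directly with representation vectors in $\ZZ^{\oplus 2}$: it splits into the cases $4 \mid m$ and $m$ odd, and in each case builds explicit bijections between the sets $\rep_{Q_i}(m)$; in the odd case it invokes a cyclic group $G \subset \mathrm{GL}_2(\ZZ)$ of order $3$ acting on $\rep_{Q_1}(m)$ (which is exactly your multiplication-by-$\omega$, written out as integer matrices) to show $r_{Q_1}(m) = 3\,r_{Q_3}(m)$. Your argument is more uniform and more conceptual: by working in $\ZZ[\omega]$ from the outset and stratifying once by residue class in $\ZZ[\omega]/(2) \cong \mathbb{F}_4$, you avoid the case split entirely and reduce the whole identity to the single statement $N_1 = N_\omega = N_{1+\omega}$, which is immediate from the unit action. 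The paper's version has the advantage of being fully explicit at the level of integer pairs (no ring identification to set up), while yours makes clearer \emph{why} the relation holds and would adapt more readily to analogous identities in other imaginary quadratic rings with nontrivial units.
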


\begin{proof}
Let $(a, b) \in \Z^{\oplus 2}$, and let \[m \defeq a^2 + 3b^2.\]
It suffices to show that \[\frac{1}{3} r_{Q_1}(m) + \frac{2}{3} r_{Q_2}(m) = r_{Q_3}(m).\]

\smallskip
\textbf{Case 1.} If $a \equiv b \pmod 2$, then the map $(a, b) \mapsto \left(\frac{a - b}{2}, b \right)$ is a bijection from  $\rep_{Q_3}(m)$ to $\rep_{Q_2}(m)$, with the inverse map given by $(u, v) \mapsto (2u + v, v)$. Therefore $r_{Q_2}(m) = r_{Q_3}(m)$. Similarly, the map $(a, b) \mapsto (a-b, 2b)$ is a bijection from $\rep_{Q_3}(m)$ to $\rep_{Q_1}(m)$, with the inverse map given by $(u, v) \mapsto \left(u + \frac{v}{2}, \frac{v}{2}\right)$ (which is an integral vector since $u^2 + uv + v^2 \equiv 0 \pmod 2 \implies u, v \equiv 0 \pmod 2$). Therefore $r_{Q_1}(m) = r_{Q_2}(m) = r_{Q_3}(m)$, which suffices.

\smallskip
\textbf{Case 2.} If $a \not\equiv b \pmod 2$, then $r_{Q_2}(m) = 0$ since $Q_2(x, y) \in 2\ZZ$ when $x, y \in \ZZ$. To show that $r_{Q_1}(m) = 3r_{Q_3}(m)$, we consider the map $\varphi$ from $\rep_{Q_3}(m)$ to subsets of $\rep_{Q_1}(m)$ given by \[\varphi \: : \:(a, b) \mapsto \{(a-b, 2b), (-a+b, a+b), (2b, -a+b)\},\] 
which we show has the following two properties:
\begin{enumerate}
    \item[1)] $\varphi$ is injective. 
    \item[2)] For any $(u, v) \in \rep_{Q_1}(m)$ there exists some $(a, b) \in \rep_{Q_3(m)}$ so that $(u, v) \in \varphi((a, b))$.
\end{enumerate}
 To show part 1), it suffices to construct an inverse map $\psi$ so that $\psi(\varphi((a, b))) = (a, b)$. Suppose $\varphi((a, b)) = \{(u_1, v_1), (u_2, v_2), (u_3, v_3)\}$. Since $a \not\equiv b \pmod 2$, from the definition of $\varphi$ we see that exactly one of the elements $(u_i, v_i)$ where $1 \le i \le 3$ satisfies $v_i \equiv 0 \pmod 2$. Then we define the inverse map $\psi$ by 
 \[\psi \: : \: \{(u_1, v_1), (u_2, v_2), (u_3, v_3)\} \mapsto \left(u_i + \frac{v_i}{2}, \frac{v_i}{2}\right),\]
which we see is in $\rep_{Q_1}(m)$ and also satisfies $\psi(\varphi((a, b))) = (a, b)$.

\medskip
 Next we show part 2). Note that the set $\varphi((a, b))$ is invariant under the left-multiplication action $(u, v) \mapsto g \cdot (u, v)$ for all $g$ in the cyclic group \[G \defeq \left\{\begin{bmatrix}
    1 & 0 \\
    0 & 1 \\ 
\end{bmatrix}, \begin{bmatrix}
    -1 & 0 \\
    1 & 1 \\
\end{bmatrix}, \begin{bmatrix}
    0 & 1 \\ 
    -1 & 0 \\
\end{bmatrix}\right\}.\] 
Suppose  $(u, v) \in \rep_{Q_1}(m)$. Then define \[F \defeq G \cdot (u, v) = \{(u, v), (-u, u + v), (v, -u)\}.\] We now show that there exists some $(a, b) \in \rep_{Q_3}(m)$ so that $\varphi((a, b)) = F$. Consider the set \[H \defeq \begin{bmatrix}
    1 & \nicefrac{1}{2} \\
    0 & \nicefrac{1}{2} \\
\end{bmatrix} \cdot F = \left\{\left(u + \frac{v}{2}, \frac{v}{2} \right), \left(\frac{v - u}{2},  \frac{u + v}{2}\right), \left(v - \frac{u}{2}, -\frac{u}{2} \right)\right\}.\]
Each element $(x, y) \in H$  satisfies $x^2 + xy + y^2 = m$. However, since $m \equiv 1 \pmod 2$,  both $u$ and $v$ cannot be even.  So exactly one of the elements $(a, b) \in H$ satisfies $(a, b) \in \Z^{\oplus 2}$, giving $\varphi((a, b)) = F$. 

Together, parts 1) and 2) imply $r_{Q_1}(m) = 3 r_{Q_3}(m)$. Since $r_{Q_2}(m) = 0$, this proves the result.  
\end{proof}

\begin{remark}[\textbf{Alternative proof using Modular Forms}]
While the proof of \autoref{lemma:non_trivial_relation_theorem} is given in terms of an explicit construction 
on the representation vectors for the given quadratic forms, by \cite[Theorem 10.9, p175]{MR1474964} we could also verify this identity 
by a (less insightful) finite computation with the associated theta series $\uptheta_{Q_i}(z)$ as modular forms in the space $M_k(N, \chi) = M_1(12, \chi_{-3})$ 
where $\chi_{-3}: (\ZZ/3\ZZ)^\times \rightarrow \{\pm1\}$ is the non-trivial Dirichlet character.  Here identities between modular forms can be verified by checking that all Fourier coefficients agree up to the Sturm bound, which by \cite[Corollary 9.20, p174]{MR2289048} (applied for infinitely many prime ideals $\mathfrak{m} = p\ZZ$) requires us to verify the identity 
$$
\frac{1}{3} r_{Q_1}(m) + \frac{2}{3} r_{Q_2}(m) = r_{Q_3}(m)
$$
for m = 0, 1, and 2.  This holds, which proves the lemma.
\end{remark}

\subsection{\texorpdfstring{$a+b \ge 4$}{a+b >= 4}}
\label{sec:a_plus_b_ge_4}
In this case we show that there are no non-trivial solutions.  

\medskip
\begin{lemma}
For all $a+b \ge 4$, \nameref{algo:extended_refinement_algorithm} can be represented using the following refinement diagram: 

\begin{center}
\begin{equation}
\label{figure:ref_tree}
\begin{tikzcd}[sep=huge]
\emptyset & T_0 \arrow[l, "{(a+b, 0, a)}"'] \arrow[l, "\vdots"'  , bend left] \arrow[l, bend left=49] \arrow[r, "{(0, a+b, b)}"] \arrow[r, "\vdots", bend right] \arrow[r, bend right=49] \arrow[d, "{(1, 1, 1)}" description]          & \emptyset \\
\emptyset & T_1 \arrow[l, "{(a+b, 0, a)}"'] \arrow[l, "\vdots"' , bend left] \arrow[l, bend left=49] \arrow[r, "{(0, a+b, b)}"] \arrow[r, "\vdots", bend right] \arrow[r, bend right=49] \arrow[d, "{(1, 1, 1)}" description]          & \emptyset \\
\emptyset & T_2 \arrow[l, "{(a+b, 0, a)}"'] \arrow[l, "\vdots"', bend left] \arrow[l, bend left=49] \arrow[r, "{(0, a+b, b)}"] \arrow[r, "\vdots", bend right] \arrow[r, no head, bend right=49] \arrow[d, "{(1, 1, 1)}" description] & \emptyset \\
          & T_3 \subseteq \Delta,                                                                                                                                                                                                                                  &          
\end{tikzcd}
\end{equation}
\end{center}
where $T_0, T_1, T_2, T_3$ are non-empty polyhedral cones independent of $a$ and $b$, with $(T_i, P_i) \in \mathcal{S}_i$ for some covering parameter $P_i$, and also $T_3 \subseteq \Delta$. In this diagram, the refinement algorithm is represented as a rooted edge-labelled directed graph. The root node $T_0 = {\mathcal{V}} \times {\mathcal{V}} \times {\mathcal{V}}$ represents the initial state of the algorithm. For any node $T$,  its children are given by the refinements $T_{\cX, \cY, \cZ}$ across all possible $\cX, \cY, \cZ$ from \autoref{def:refinement_procedure}. The edge from $T$ to $T_{\cX, \cY, \cZ}$ is labelled $(|\cX|,|\cY|,|\cZ|) \in \cL_{a, b}$.
\end{lemma}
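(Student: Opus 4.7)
The plan is to trace the first three iterations of \nameref{algo:extended_refinement_algorithm} explicitly, showing that at each stage only the $(1,1,1)$-labelled refinement yields a non-empty child, and that after three such refinements the resulting cone is contained in $\Delta$. The two key ingredients will be: (i) the initial segment of successive minima of $\ZZ^{\oplus 2}_{\tilde{*}}$ under $\preceq$ is uniquely forced, beginning with $(1,0), (0,1), (-1,1), (1,1), (-2,1), \ldots$; and (ii) requiring a fixed $Q_i \in \mathcal{V}$ to be constant on enough successive minima over-determines $Q_i$ and forces $Q_i \equiv 0$.

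First I would pin down the initial minima via \autoref{lemma:preceq_algo}. Direct verification against the three edge forms $\cE_1, \cE_2, \cE_3$ shows
\[
\minimum(\ZZ^{\oplus 2}_{\tilde{*}}) = \{(1,0)\}, \quad \minimum(\ZZ^{\oplus 2}_{\tilde{*}} \setminus \{(1,0)\}) = \{(0,1)\}, \quad \minimum(\ZZ^{\oplus 2}_{\tilde{*}} \setminus \{(1,0),(0,1)\}) = \{(-1,1)\},
\]
and an analogous check yields $(1,1)$ and then $(-2,1)$ as the next two minima; in each case the candidate vector is $\preceq$ every other remaining strongly primitive vector, the only potential exceptions being a finite list of vectors of small $\cE_i$-norm, all of which are either already removed or fail to be strongly primitive.

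Next I construct the chain by setting $\vx_1 \defeq (1,0)$, $\vx_2 \defeq (0,1)$, $\vx_3 \defeq (-1,1)$, $P_k \defeq ((\{\vx_j\})_{j=1}^k, (\{\vx_j\})_{j=1}^k, (\{\vx_j\})_{j=1}^k)$, and letting $T_k$ be the $(1,1,1)$-refinement of $T_{k-1}$. Each $T_k$ is cut out from $T_{k-1}$ by the single linear equality $Q_1(\vx_k) = Q_2(\vx_k) = Q_3(\vx_k)$, so it is a polyhedral cone containing the diagonal $\Delta$ and hence non-empty; independence of $(a,b)$ is immediate since $(1,1,1) \in \cL_{a,b}$ always and the refinement data uses no other parameters. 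Combining the three resulting equalities---$q_{11}^{(i)}$ agree, $q_{22}^{(i)}$ agree, and $(q_{11}^{(i)} + q_{22}^{(i)} - q_{12}^{(i)})$ agree across $i$---also forces $q_{12}^{(i)}$ to coincide, so $T_3 \subseteq \Delta$.

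The main obstacle is verifying emptiness of the $(a+b,0,a)$ and $(0,a+b,b)$ branches at all three levels. By symmetry I would focus on the $(a+b, 0, a)$ case, where $\mathcal{Q}_{\cX, \cY, \cZ}$ forces $Q_1$ to be constant on $\cX$. Since $a+b \ge 4$, the set $\cX$ necessarily contains the first four uniquely determined successive minima at level $0$ and the first three uniquely determined minima at levels $1$ and $2$. At level $0$ the equalities $Q_1(1,0) = Q_1(0,1) = Q_1(-1,1) = Q_1(1,1)$ give $q_{11} = q_{22}$, $q_{12} = q_{22}$, and $q_{12} = -q_{22}$, forcing $Q_1 = 0$; at level $1$ the equalities $Q_1(0,1) = Q_1(-1,1) = Q_1(1,1)$ yield $q_{11} = q_{12}$ and $q_{11} = -q_{12}$, again forcing $Q_1 = 0$; at level $2$ the equalities $Q_1(-1,1) = Q_1(1,1) = Q_1(-2,1)$ give $q_{12} = 0$ and $q_{11} = 4q_{11}$, so once more $Q_1 = 0$. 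In every case this contradicts $Q_1 \in \mathcal{V}$, uniformly in $(a,b)$ and (when $\minimum_{a+b}$ happens to have multiple elements) independently of the choice of $\cX$. With these empty branches confirmed, the diagram in the statement is the direct read-out of the three successive $(1,1,1)$-refinements.
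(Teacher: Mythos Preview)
Your argument is correct in substance and follows the same core idea as the paper---force $Q_1$ (or $Q_2$) to be constant on enough successive minima that it is over-determined---but the execution differs in a useful way. The paper first runs the algorithm at $a+b=4$ as a computer-verified base case, then for $a+b>4$ invokes monotonicity of $K$-sets (any $\cX\in\minimum_{a+b}(\cdots)$ contains some $\cX'\in\minimum_4(\cdots)$, so $K(\cdots;\cX)\subseteq K(\cdots;\cX')$) to reduce to the base case. You instead give direct hand computations valid uniformly for all $a+b\ge 4$, using only the first three or four forced minima at each level; this avoids the two-case split and the appeal to machine output, at the cost of writing out the short linear-algebra checks yourself. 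Both buy the same conclusion, but yours is more self-contained.

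One small correction: at levels $1$ and $2$ your equalities do \emph{not} force $Q_1=0$. At level $1$, $Q_1(0,1)=Q_1(-1,1)=Q_1(1,1)$ gives $q_{11}=q_{12}$ and $q_{12}=0$, hence $q_{11}=q_{12}=0$, but $q_{22}$ remains free; likewise at level $2$ you get $q_{11}=q_{12}=0$ with $q_{22}$ unconstrained. This does not damage the argument, since $q_{11}=0$ already contradicts $Q_1\in\mathcal{V}$ (which requires $q_{11}>0$), and you correctly conclude with ``this contradicts $Q_1\in\mathcal{V}$''. Just replace the intermediate claim ``$Q_1=0$'' at levels $1$ and $2$ by ``$q_{11}=0$''. (At level $0$ your four equalities genuinely do force $Q_1=0$, so that statement is fine.)
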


\begin{proof} We first prove the claim for $a+b = 4$, and then generalize it for all $a+b \ge 4$. 

\medskip
 \textbf{Case 1 ($a + b = 4$)}:  Since $\gcd(a, b) = 1$, we have either $(a, b) = (1, 3)$ or $(3, 1)$.  Now by applying \linebreak \nameref{algo:extended_refinement_algorithm}, we see that there exist $T_0, T_1, T_2, T_3$ (with $T_3 \subseteq \Delta$) so in either case the refinement algorithm is represented by the refinement diagram  (\hyperref[figure:ref_tree]{15}). (See \autoref{subsec:p1p2p3p4} for explicit polyhedral cone descriptions of $T_0, T_1, T_2, T_3$ and their respective covering parameters $P_0, P_1, P_2, P_3$). 

\medskip
 \textbf{Case 2 ($a + b > 4$)}: We use Case 1 to show that for all $a + b > 4$, the refinement algorithm is represented by the refinement diagram  (\hyperref[figure:ref_tree]{15}).  When an edge is labelled $(1, 1, 1)$, the refinement is independent of the choices of $a$ and $b$, and so the path $T_0 \xrightarrow[]{(1, 1, 1)} T_1 \xrightarrow[]{(1, 1, 1)} T_2 \xrightarrow[]{(1, 1, 1)} T_3$ is always in the directed graph. Therefore it suffices to show that any edge from $T_i \in \{T_0, T_1, T_2\}$ that is labelled $(a+b, 0, a)$ or $(0, a+b, b)$ is directed to an empty polyhedral cone.      

In \autoref{def:refinement_procedure}, if $\vec{n} = (a+b, 0, a)$ then $\mathcal{X} = \mathcal{X}(\vec{n}) \in \operatorname{MIN}_{a+b}\left(\mathbb{Z}_{\tilde{*}}^{\oplus 2} \setminus\left(X_1\union \ldots\union X_k\right)\right)$,  and if $\vec{n} = (0, a+b, b)$ then $\mathcal{Y} = \mathcal{Y}(\vec{n}) \in \operatorname{MIN}_{a+b}\left(\mathbb{Z}_{\tilde{*}}^{\oplus 2} \setminus\left(Y_1\union \ldots\union Y_k\right)\right)$.  We show  in these cases, that respectively either $\kset(\Z^{\oplus 2}_{\tilde{*}}; X_1, \ldots, X_k, \cX)$ or $\kset(\Z^{\oplus 2}_{\tilde{*}}; Y_1, \ldots, Y_k, \cY)$ is the zero-cone (i.e. the polyhedral cone $\{(0, 0, 0)\}$). Since the binary quadratic form given by $Q = 0$ is not in $\mathcal{V}$, the refinement of $(T_i, P_i)$ will be empty. Below, we execute the refinement for $(T_i, P_i)$ with $T_i \in \{T_0, T_1, T_2\}$.   

\medskip
 \textbf{Case 2a (Refining $(T_0, P_0)$)}. $T_0$ has covering parameter $P_0 = ((), (), ())$, where $()$ denotes an  empty sequence. Since $a + b > 4$, for any $X_1 \in \minimum_{a+b}(\Z^{\oplus 2}_{\tilde{*}})$, there exists $X'_1 \in \minimum_{4}(\Z^{\oplus 2}_{\tilde{*}})$ so that $X'_1 \subseteq X_1$. Therefore $\kset(\Z^{\oplus 2}_{\tilde{*}}; X_1) \subseteq \kset(\Z^{\oplus 2}_{\tilde{*}}; X'_1)$. We will show that $\kset(\Z^{\oplus 2}_{\tilde{*}}; X'_1)$ is the zero-cone, implying $\kset(\Z^{\oplus 2}_{\tilde{*}}, X_1)$ is also the zero-cone.    

Since $\minimum_4(\Z_{\tilde{*}}^{\oplus 2}) = \{\{(1, 0), (0, 1), (-1, 1), (1, 1)\}\}$, we can use our $\kset$-set algorithm described in \autoref{lemma:Kset} to obtain 
\[\kset(\Z^{\oplus 2}_{\tilde{*}}; \{(1, 0), (0, 1), (-1, 1), (1, 1)\}) = \text{{Polyhedral Cone }} \{(0, 0, 0)\},\]
as required.  
After refining $(T_0, P_0)$, our refinement diagram (\hyperref[figure:ref_tree]{15}) now looks like this: 
\begin{center}
\begin{tikzcd}[sep=huge]
\emptyset & T_0 \arrow[l, "{(a+b, 0, a)}"'] \arrow[l, "\vdots"', bend left] \arrow[l, bend left=49] \arrow[r, "{(0, a+b, b)}"] \arrow[r, "\vdots", bend right] \arrow[r, bend right=49] \arrow[d, "{(1, 1, 1)}" description]          & \emptyset \\
& T_1.   
\end{tikzcd}
\end{center}

\textbf{Case 2b (Refining $(T_1, P_1)$)}. $T_1$ has covering parameter $P_1 = ((\{((1, 0))\}), (\{((1, 0))\}), (\{((1, 0))\}))$. For any $X_2 \in \minimum_{a+b}(\Z^{\oplus 2}_{\tilde{*}}\setminus X_1)$ where $X_1 \defeq  \{(1, 0)\}$, there exists some $X'_2 \in \minimum_{4}(\Z^{\oplus 2}_{\tilde{*}}\setminus X_1)$ so that $X'_2 \subseteq X_2$. As in Case 2a, we show that  $\kset(\Z^{\oplus 2}_{\tilde{*}}; X_1, X'_2)$ is the zero-cone. Since $\minimum_4(\Z^{\oplus 2}_{\tilde{*}} \setminus \{(1, 0)\}) = \{\{(0, 1), (-1, 1), (1, 1), (-2, 1)\}\}$, we again use our $K$-set algorithm in \autoref{lemma:Kset} to obtain 
\[\kset(\Z^{\oplus 2}_{\tilde{*}}; \{(1, 0)\}, \{(0, 1), (-1, 1), (1, 1), (-2, 1)\}) = \text{{Polyhedral Cone }} \{(0, 0, 0)\},\]   
as required. After refining $(T_1, P_1)$, our refinement diagram (\hyperref[figure:ref_tree]{15})  now looks like this:
\begin{center}
\begin{tikzcd}[sep=huge]
\emptyset & T_0 \arrow[l, "{(a+b, 0, a)}"'] \arrow[l, "\vdots"', bend left] \arrow[l, bend left=49] \arrow[r, "{(0, a+b, b)}"] \arrow[r, "\vdots", bend right] \arrow[r, bend right=49] \arrow[d, "{(1, 1, 1)}" description]          & \emptyset \\
\emptyset & T_1 \arrow[l, "{(a+b, 0, a)}"'] \arrow[l, "\vdots"', bend left] \arrow[l, bend left=49] \arrow[r, "{(0, a+b, b)}"] \arrow[r, "\vdots", bend right] \arrow[r, bend right=49] \arrow[d, "{(1, 1, 1)}" description]          & \emptyset \\
& T_2. &      
\end{tikzcd}
\end{center} 
 
 \textbf{Case 2c (Refining $(T_2, P_2)$)}. $T_2$ has covering parameter \[P_2 = ((\{(1, 0)\}, \{(0, 1)\}), (\{(1, 0)\}, \{(0, 1)\}), (\{(1, 0)\}, \{(0, 1)\})).\] For any $X_3 \in \minimum_{a+b}(\Z^{\oplus 2}_{\tilde{*}}\setminus (X_1 \union X_2))$ where $X_1 \defeq  \{(1, 0)\}$ and $X_2 \defeq  \{(0, 1)\}$, there exists some $X'
_3 \in \minimum_{4}(\Z^{\oplus 2}_{\tilde{*}}\setminus (X_1 \union X_2))$ so that $X'_3 \subseteq X_3$. As in the previous two cases, we show that $\kset(\Z^{\oplus 2}_{\tilde{*}}; X_1, X_2, X'_3)$ is the zero-cone. Since \[\minimum_4(\Z^{\oplus 2}_{\tilde{*}} \setminus \{(1, 0), (0, 1)\}) = \{\{(-1, 1), (1, 1), (-2, 1), (2, 1)\}, \{(-1, 1), (1, 1), (-2, 1), (-1, 2)\} \},\] there are two choices of $X'_3$. If $X'_3 = \{(-1, 1), (1, 1), (-2, 1), (2, 1)\}$, then 
\[\kset(\Z^{\oplus 2}_{\tilde{*}}; \{(1, 0)\}, \{(0, 1)\}, \{(-1, 1), (1, 1), (-2, 1), (2, 1)\}) = \text{{Polyhedral Cone  }} \{(0, 0, 0)\},\]
and if $X'_3 = \{(-1, 1), (1, 1), (-2, 1), (-1, 2)\}$, then 
\[\kset(\Z^{\oplus 2}_{\tilde{*}}; \{(1, 0)\}, \{(0, 1)\}, \{(-1, 1), (1, 1), (-2, 1), (-1, 2)\}) = \text{{Polyhedral Cone }} \{(0, 0, 0)\},\]
as required.  After refining $(T_2, P_2)$, the refinement diagram (\hyperref[figure:ref_tree]{15})  is complete: 

\begin{center}
\begin{tikzcd}[sep=huge]
\emptyset & T_0 \arrow[l, "{(a+b, 0, a)}"'] \arrow[l, "\vdots"'  , bend left] \arrow[l, bend left=49] \arrow[r, "{(0, a+b, b)}"] \arrow[r, "\vdots", bend right] \arrow[r, bend right=49] \arrow[d, "{(1, 1, 1)}" description]          & \emptyset \\
\emptyset & T_1 \arrow[l, "{(a+b, 0, a)}"'] \arrow[l, "\vdots"' , bend left] \arrow[l, bend left=49] \arrow[r, "{(0, a+b, b)}"] \arrow[r, "\vdots", bend right] \arrow[r, bend right=49] \arrow[d, "{(1, 1, 1)}" description]          & \emptyset \\
\emptyset & T_2 \arrow[l, "{(a+b, 0, a)}"'] \arrow[l, "\vdots"', bend left] \arrow[l, bend left=49] \arrow[r, "{(0, a+b, b)}"] \arrow[r, "\vdots", bend right] \arrow[r, no head, bend right=49] \arrow[d, "{(1, 1, 1)}" description] & \emptyset \\
          & T_3 \subseteq \Delta.                                                                                                                                                                                                                                  &          
\end{tikzcd}
\end{center}
\end{proof}

\section{Acknowledgements}

We would like to thank Sabrina Reguyal for translating Schiemann's original paper from German. This was invaluable for understanding Schiemann's complete perspective on these computations.

\section{Index of Terminology and Notation}

\begin{center}
\begin{tabular}{|c|c|c|c|}
\hline
\textbf{Terminology} & \textbf{Notation} &  \vphantom{$\dfrac{1}{2}$} \textbf{Defined in} \\
\hline \hline
Admissibility of Covering Parameter & --- & \vphantom{$\dfrac{1}{2}$} \autoref{def:covering_admissibility} \\
\hline
Covering & --- & \vphantom{$\dfrac{1}{2}$}\autoref{def:covering}\\
\hline
Covering Parameter & $((X_i)_{i=1}^{k}, (Y_i)_{i=1}^{k}, (Z_i)_{i=1}^{k})$ &\vphantom{$\dfrac{1}{2}$} \autoref{def:covering_parameter} \\
\hline 
Covering Parameter Auxiliary Sets & $\mathcal{K}_{\cX, \cY, \cZ}, \mathcal{Q}_{\cX, \cY, \cZ} $ & \vphantom{$\dfrac{1}{2}$}\autoref{def:auxiliary_sets} \\ 
\hline 
Diagonal & $\Delta$ & \vphantom{$\dfrac{1}{2}$}\autoref{def:solution_set} \\ 
\hline 
Edges of $\mathcal{V}$ & $\cE_1, \cE_2, \cE_3$ &\vphantom{$\dfrac{1}{2}$} \autoref{rem:edges_of_V} \\
\hline 
Extended Refinement Algorithm & --- &\vphantom{$\dfrac{1}{2}$} \hyperref[algo:extended_refinement_algorithm]{Algorithm 1} \\
\hline 
Extended Refinement Algorithm Sets & $\mathcal{S}_i$ & \vphantom{$\dfrac{1}{2}$} \hyperref[algo:extended_refinement_algorithm]{Algorithm 1} \\
\hline
$\text{GL}_2(\ZZ)$-Reduced Positive-Definite Binary Quadratic Forms & $\mathcal{V}$ & \vphantom{$\dfrac{1}{2}$} \autoref{def:reduced_forms}\\
\hline 
Integral Form of Normalized Rational $3$-Term Linear Relation & --- & \vphantom{$\dfrac{1}{2}$} \autoref{remark:integral_form} \\
\hline
$\kset$-set & $\kset(X; X_1, \ldots, X_k)$ & \vphantom{$\dfrac{1}{2}$} \autoref{def:kset_set} \\
\hline
Linset & $\cL_{a, b}$ & \vphantom{$\dfrac{1}{2}$} \autoref{def:linset} \\
\hline 
Max Iterations & $\mathtt{MAX\_ITERATIONS}$ & \vphantom{$\dfrac{1}{2}$} \hyperref[algo:extended_refinement_algorithm]{Algorithm 1} \\
\hline
Minimal Subset & $\minimum(X)$ & \vphantom{$\dfrac{1}{2}$} \autoref{def:minimum} \\
\hline 
$n^{\text{th}}$-order Minimal Subset & $\minimum_n(X)$ & \vphantom{$\dfrac{1}{2}$} \autoref{def:nth_order_minimal_subset} \\
\hline 
Normalized Rational $3$-Term Linear Relation & --- & \vphantom{$\dfrac{1}{2}$} \autoref{def:normalized_rational} \\
\hline
Ordering Relation(s)  & $\preceq$, $\succeq$ & \vphantom{$\dfrac{1}{2}$} \autoref{def:pre_order} \\
\hline
Polyhedral Cone & $\cP(A, B)$ & \vphantom{$\dfrac{1}{2}$} \autoref{def:polycone}\\
\hline
Refinement of a Pair & $\mathcal{M}_{a, b}$ & \vphantom{$\dfrac{1}{2}$} \autoref{def:refinement_procedure} \\
\hline 
Refinement of Set of Pairs & $\mathtt{refine}_{a, b}$ & \vphantom{$\dfrac{1}{2}$} \autoref{def:refinement_procedure}\\ 
\hline 
Refinement Process & --- & \vphantom{$\dfrac{1}{2}$} \autoref{def:refinement_procedure} \\
\hline
Refinement Sequence & $(\mathcal{T}_i)_{i=0}^{\infty}$ & \vphantom{$\dfrac{1}{2}$} \autoref{def:refinement_sequence} \\
\hline
Representation Number & $r_Q(m)$ &  \vphantom{$\dfrac{1}{2}$}\autoref{sec:introduction}\\
\hline
Representation Set & $\rep_Q(m)$ &  \vphantom{$\dfrac{1}{2}$}\autoref{sec:introduction}\\
\hline
Solution Set of \autoref{eqn:rational_lin} & $\mathcal{D}_{a, b}$ & \vphantom{$\dfrac{1}{2}$} \autoref{def:solution_set} \\
\hline 
Stop Set & $\mathtt{STOP\_SET}$ &  \vphantom{$\dfrac{1}{2}$}\hyperref[algo:extended_refinement_algorithm]{Algorithm 1} \\
\hline
Strongly Primitive Representation Number & \vphantom{$\dfrac{1}{2}$} $r^{\tilde{*}}_Q(m)$ & \autoref{def:strongly_primitive_objects}\\
\hline
Strongly Primitive Representation Set & \vphantom{$\dfrac{1}{2}$} $\rep^{\tilde{*}}_Q(m)$ & \autoref{def:strongly_primitive_objects}\\
\hline
Strongly Primitive Theta Series & \vphantom{$\dfrac{1}{2}$} $\uptheta_Q^{\tilde{*}}(z)$ & \autoref{def:strongly_primitive_objects}\\
\hline
Strongly Primitive Vectors & $\Z^{\oplus 2}_{\tilde{*}}$ & \vphantom{$\dfrac{1}{2}$} \autoref{def:strongly_primitive_objects}\\
\hline 
Successive Minima Sequence & --- & \vphantom{$\dfrac{1}{2}$} \autoref{def:succ_min_set}\\
\hline
Theta Series & $\uptheta_Q(z)$ &  \vphantom{$\dfrac{1}{2}$} \autoref{sec:introduction}\\
\hline
$\ZZ$-equivalence & $\sim_\ZZ$ & \vphantom{$\dfrac{1}{2}$} \autoref{sec:introduction} \\
\hline
\end{tabular}
\end{center}

\newpage
\bibliographystyle{plain} 
\bibliography{references}
\medskip
\textsc{Rahul Saha, New York, NY, USA}\\ 
\textit{E-mail address}: \texttt{rah4927@gmail.com} \\ 
\textit{URL}: \href{rahulsaha.net}{\texttt{http://www.rahulsaha.net}} \\ 

\noindent \textsc{Jonathan Hanke, Princeton, NJ 08542, USA} \\ 
\textit{E-mail address}: \texttt{jonhanke@gmail.com} \\
\textit{URL}: \href{jonhanke.com}{\texttt{http://www.jonhanke.com}} \\ 

\pagebreak 
    
\section{Appendix} 
\label{sec:appendix} 

\subsection{Example \texorpdfstring{$\minimum$}{MIN} Computation}

\noindent {\large \textbf{Example 1.}} We will compute $\minimum(\Z^{\oplus 2}_{\tilde{*}})$ using \autoref{lemma:min_properties}. First of all, note that for $a = 1$, $(a, 1) \in \Z^{\oplus 2}_{\tilde{*}}$. Then, we define $W_{0, 1} = \{x \in \Z^{\oplus 2}_{\tilde{*}} : x \not\succeq (1, 1)\} \union \{(a, 1)\}$. By the lemma, $W_{0, 1} \subseteq \{\vx \in \Z^{\oplus 2}_{\tilde{*}} : ||x||_{\infty} \le \sqrt{6}\}$ and the latter is a finite set. Then, iterating gives us $W_{0, 1} = \{(-1, 1), (0, 1), (1, 0), (1, 1)\}$ We know from the lemma that $\minimum(\Z^{\oplus 2}_{\tilde{*}}) = \minimum(\Z^{\oplus 2}_{\tilde{*}} \intersect W_{0, a}) = \minimum(\{(-1, 1), (0, 1), (1, 0), (1, 1)\})$. 

This is much better because now we have to compute the $\minimum$ of a finite set. For each element in this set, we can check using \autoref{lemma:preceq_algo} if any other element of the set precedes it. If not, then it is in the $\minimum$. Doing this gets us $\minimum(\Z^{\oplus 2}_{\tilde{*}}) = \{(1, 0)\}$.

\medskip
\noindent {\large \textbf{Example 2.}} We will compute $\minimum_n(\Z^{\oplus 2}_{\tilde{*}})$ for $n = 6$. We will do this inductively. We know from the previous example that \[\minimum_1(\Z^{\oplus 2}_{\tilde{*}}) = \{\{(1, 0)\}\}\] Now, we can use \autoref{lemma:min_properties} to obtain \[\minimum(\Z^{\oplus 2}_{\tilde{*}} \setminus \{(1, 0)\}) = \{(0, 1)\} \] 
and thus \[\minimum_2(\Z^{\oplus 2}_{\tilde{*}}) = \{\{(1, 0), (0, 1)\}\}\]
Similarly, since \[\minimum(\Z^{\oplus 2}_{\tilde{*}} \setminus \{(1, 0), (0, 1)\}) = \{(-1, 1)\}\] we get \[\minimum_3(\Z^{\oplus 2}_{\tilde{*}}) = \{\{(1, 0), (0, 1), (-1, 1)\}\}\] This continues for $n = 4$ and $n = 5$,  \[\minimum_4(\Z^{\oplus 2}_{\tilde{*}}) = \{\{(1, 0), (0, 1), (-1, 1), (1, 1)\}\}\] \[\minimum_5(\Z^{\oplus 2}_{\tilde{*}}) = \{\{(1, 0), (0, 1), (-1, 1), (1, 1), (-2, 1)\}\}.\] The next step however is a little different. 
This time, we see that there are two elements in the minima. \[\minimum(\Z^{\oplus 2}_{\tilde{*}} \setminus \{(1, 0), (0, 1), (-1, 1), (1, 1), (-2, 1)\}) = \{(2, 1), (-1, 2)\} \]  This gives us two sets in $\minimum_6$, \[\minimum_6(\Z^{\oplus 2}_{\tilde{*}}) = \{ \{(1, 0), (0, 1), (-1, 1), (1, 1), (-2, 1), (2, 1)\},  \{(1, 0), (0, 1), (-1, 1), (1, 1), (-2, 1), (-1, 2)\}\}.\]

\subsection{Example \texorpdfstring{$K$}{K}-set Computation}
\label{subsec:example_kset}

Below, we will adopt the convention to represent sets $A$ and $B$ while defining a polyhedral cone $\cP(A, B)$ as matrices. As an example, the set $\{(1, 2, 1), (-1, 1, 0)\}$ becomes $\begin{bmatrix}
    1 & 2 & 1 \\ 
    -1 & 1 & 0 \\ 
\end{bmatrix}$. Note that the ordering of the rows is irrelevant.

\medskip

\noindent {\large \textbf{Example.}} We will compute $\kset(\Z^{\oplus 2}_{\tilde{*}}; \{(1, 0), (0, 1)\}, \{\}, \{(-1, 1)\})$. We will use \autoref{lemma:Kset}. Suppose a quadratic form $Q \defeq (Q_{11}, Q_{22}, Q_{12})$ is in $K(\Z^{\oplus 2}_{\tilde{*}}; \{(1, 0), (0, 1)\}, \{\}, \{(-1, 1)\})$. Then, for our lemma to work, we will start by ignoring the empty sets, and picking a representative element from each set. Suppose we choose $(1, 0)$ and $(-1, 1)$. Then, we will compute $\kset(\Z^{\oplus 2}_{\tilde{*}}; (1, 0), (0, 1) (-1, 1))$ using \autoref{lemma:Kset_schiemann}. Since $\minimum(\Z^{\oplus 2}_{\tilde{*}} \setminus \{(1,0), (0,1), (-1, 1)\}) = \{(1, 1)\}$, we need \[Q(1, 0) \le Q(0, 1) \le Q(-1, 1) \le Q(1, 1)\]
\[\kset(\Z^{\oplus 2}_{\tilde{*}}; (1, 0), (0, 1) (-1, 1)) = \cP\left(\left[\begin{array}{rrr}
-1 & 1 & 0 \\
1 & 0 & -1 \\
0 & 0 & 2
\end{array}\right], [\quad] \right)\]
The second part is to compute the equalities, in our case, we only care about the equality constraint in $\{(1, 0), (0, 1)\}$, since all the other sets have fewer than two elements and so the equality conditions hold vacuously. We want 
\[Q(1, 0) = Q(0, 1)\]
Adding this condition to  $\kset(\Z^{\oplus 2}_{\tilde{*}}; (1, 0), (0, 1) (-1, 1))$ gives us 
\[ \kset(\Z^{\oplus 2}_{\tilde{*}}; \{(1, 0), (0, 1)\}, \{\}, \{(-1, 1)\}) = \cP\left(\left[\begin{array}{rrr}
-1 & 1 & 0 \\
1 & 0 & -1 \\
0 & 0 & 2 \\
1 & -1 & 0 \\
-1 & 1 & 0
\end{array}\right], [\quad]\right).\]

\subsection{Example Refinement Computation}
\label{sec:example_refinement_computation}
Suppose $(a, b) = (1, 2)$. Then $\cL_{a, b} = \cL_{1, 2} =  \{(1, 1, 1), (3, 0, 1), (0, 3, 2)\}$. Below, we will show two examples of refinement. Below, for each polyhedral cone, we will only provide the face description using $A$ and $B$. One can obtain the edge description by following any edge computing algorithm. 

\medskip
\noindent {\large \textbf{Example 1.}} Suppose we start with a polyhedral cone $P$ with the following description,  
\[P \left(\begin{bmatrix}
-1 & 1 & 0 & 0 & 0 & 0 & 0 & 0 & 0 \\
1 & 0 & -1 & 0 & 0 & 0 & 0 & 0 & 0 \\
0 & 0 & 1 & 0 & 0 & 0 & 0 & 0 & 0 \\
0 & 0 & 0 & 0 & 0 & 0 & -1 & 1 & 0 \\
0 & 0 & 0 & 0 & 0 & 0 & 1 & 0 & -1 \\
0 & 0 & 0 & 0 & 0 & 0 & 0 & 0 & 1 \\
0 & 0 & 0 & -1 & 1 & 0 & 0 & 0 & 0 \\
0 & 0 & 0 & 1 & 0 & -1 & 0 & 0 & 0 \\
0 & 0 & 0 & 0 & 0 & 1 & 0 & 0 & 0 \\
1 & 0 & 0 & 0 & 0 & 0 & 0 & 0 & 0 \\
0 & 0 & 0 & 1 & 0 & 0 & 0 & 0 & 0 \\
0 & 0 & 0 & 0 & 0 & 0 & 1 & 0 & 0
\end{bmatrix}, \quad  \begin{bmatrix}
1 & 0 & 0 & 0 & 0 & 0 & 0 & 0 & 0 \\
0 & 0 & 0 & 1 & 0 & 0 & 0 & 0 & 0 \\
0 & 0 & 0 & 0 & 0 & 0 & 1 & 0 & 0
\end{bmatrix} \right)\]
with covering parameter $((), (), ())$. Note this implies that we have not refined this polyhedral cone yet. Indeed, the above polyhedral cone corresponds to $\overline{V} \times \overline{V} \times \overline{V}$. Now, we will refine the polyhedral cone for each vector $\vec{n}$ in $\cL_{1, 2}$. 

\noindent \underline{$\vec{n} = (1, 1, 1)$}. Then, we have 
\begin{equation*}
    \begin{split}
    \minimum_1(\Z^{\oplus 2}_{\tilde{*}}) = \{\{(1, 0)\}\}\\ 
    \minimum_1(\Z^{\oplus 2}_{\tilde{*}}) = \{\{(1, 0)\}\}\\ 
    \minimum_1(\Z^{\oplus 2}_{\tilde{*}}) = \{\{(1, 0)\}\}\\
    \end{split}
\end{equation*}
Now, we will iterate over each triplet $(\mathcal{X}, \mathcal{Y}, \mathcal{Z}) \in \{\{(1, 0)\}\} \times \{\{(1, 0)\}\} \times \{\{(1, 0)\}\}$. The only such triplet is $(\mathcal{X}, \mathcal{Y}, \mathcal{Z}) = (\{(1, 0)\}, \{(1, 0)\}, \{(1, 0)\})$. Then, 
\[\cK_{\cX,\cY,\cZ} = \cP\left(\begin{bmatrix}
    -1 & 1 & 0 & 0 & 0 & 0 & 0 & 0 & 0\\
    1 & -1 & 0 & 0 & 0 & 0 & 0 & 0 & 0 \\ 
     0 & 0 & 0 & -1 & 1 & 0 & 0 & 0 & 0\\
     0 & 0 & 0 & 1 & -1 & 0 & 0 & 0 & 0\\
    -1 & 1 & 0 & 0 & 0 & 0 & 0 & 0 & 0\\
    1 & -1 & 0 & 0 & 0 & 0 & 0 & 0 & 0\\
\end{bmatrix}, [\quad]\right),\]
\[\cQ_{\cX,\cY,\cZ} = \cP\left(\begin{bmatrix}
    1 & 0 & 0 & -1 & 0 & 0 & 0 & 0 & 0 \\
    -1 & 0 & 0 & 1 & 0 & 0 & 0 & 0 & 0 \\ 
    1 & 0 & 0 & 0 & 0 & 0 & -1 & 0 & 0 \\
    -1 & 0 & 0 & 0 & 0 & 0 & 1 & 0 & 0 \\ 
\end{bmatrix}, [\quad]\right).\]
Here, we obtained $\cK_{\cX,\cY,\cZ}$ by following the same steps as in \autoref{subsec:example_kset}. The $\cQ_{\cX,\cY,\cZ}$ was obtained directly from the definition, since we want 
\[Q_1(\{(1, 0)\}) = Q_2(\{(1, 0)\}) = Q_3(\{(1, 0)\})\]
which gives us two independent linear equalities. 

Finally, we get the refinement $P'$ as follows, \[P' = P \intersect \cK_{\cX,\cY,\cZ} \intersect \cQ_{\cX,\cY,\cZ}\] which obtains 
\[P' = \cP\left(\left[\begin{array}{rrrrrrrrr}
-1 & 1 & 0 & 0 & 0 & 0 & 0 & 0 & 0 \\
1 & 0 & -1 & 0 & 0 & 0 & 0 & 0 & 0 \\
0 & 0 & 1 & 0 & 0 & 0 & 0 & 0 & 0 \\
0 & 0 & 0 & 0 & 0 & 0 & -1 & 1 & 0 \\
0 & 0 & 0 & 0 & 0 & 0 & 1 & 0 & -1 \\
0 & 0 & 0 & 0 & 0 & 0 & 0 & 0 & 1 \\
0 & 0 & 0 & -1 & 1 & 0 & 0 & 0 & 0 \\
0 & 0 & 0 & 1 & 0 & -1 & 0 & 0 & 0 \\
0 & 0 & 0 & 0 & 0 & 1 & 0 & 0 & 0 \\
1 & 0 & 0 & 0 & 0 & 0 & -1 & 0 & 0 \\
-1 & 0 & 0 & 0 & 0 & 0 & 1 & 0 & 0 \\
0 & 0 & 0 & 1 & 0 & 0 & -1 & 0 & 0 \\
0 & 0 & 0 & -1 & 0 & 0 & 1 & 0 & 0 \\
1 & 0 & 0 & 0 & 0 & 0 & 0 & 0 & 0 \\
0 & 0 & 0 & 1 & 0 & 0 & 0 & 0 & 0 \\
0 & 0 & 0 & 0 & 0 & 0 & 1 & 0 & 0
\end{array}\right], \left[\begin{array}{rrrrrrrrr}
1 & 0 & 0 & 0 & 0 & 0 & 0 & 0 & 0 \\
0 & 0 & 0 & 1 & 0 & 0 & 0 & 0 & 0 \\
0 & 0 & 0 & 0 & 0 & 0 & 1 & 0 & 0
\end{array}\right]\right).\] 

\noindent \underline{$\vec{n} = (3, 0, 1)$}. Then, we have 
\begin{equation*}
    \begin{split}
    \minimum_3(\Z^{\oplus 2}_{\tilde{*}}) &= \{\{(1, 0), (0, 1), (-1, 1)\}\}\\ 
    \minimum_0(\Z^{\oplus 2}_{\tilde{*}}) &= \{\{\}\}\\ 
    \minimum_1(\Z^{\oplus 2}_{\tilde{*}}) &= \{\{(1, 0)\}\}\\
    \end{split}
\end{equation*}
Now, we will iterate over each triplet  $(\mathcal{X}, \mathcal{Y}, \mathcal{Z}) \in \{\{(1, 0), (0, 1), (-1, 1)\}\} \times \{\{\}\} \times \{\{(1, 0)\}\}$. The only such triplet is  $(\mathcal{X}, \mathcal{Y}, \mathcal{Z}) = (\{(1, 0), (0, 1), (-1, 1)\}, \{\}, \{(1, 0)\})$. Then, 
\[\cK_{\cX,\cY,\cZ} = \cP \left(\begin{bmatrix}
-1 & 1 & 0 & 0 & 0 & 0 & 0 & 0 & 0 \\
1 & 0 & -1 & 0 & 0 & 0 & 0 & 0 & 0 \\
0 & 0 & 2 & 0 & 0 & 0 & 0 & 0 & 0 \\
1 & -1 & 0 & 0 & 0 & 0 & 0 & 0 & 0 \\
-1 & 1 & 0 & 0 & 0 & 0 & 0 & 0 & 0 \\
0 & -1 & 1 & 0 & 0 & 0 & 0 & 0 & 0 \\
0 & 1 & -1 & 0 & 0 & 0 & 0 & 0 & 0 \\
0 & 0 & 0 & 0 & 0 & 0 & -1 & 1 & 0 \\
\end{bmatrix}, [\quad] \right),\]
\[\cQ_{\cX,\cY,\cZ} = \cP\left(\begin{bmatrix}
1 & 0 & 0 & 0 & 0 & 0 & -1 & 0 & 0 \\
-1 & 0 & 0 & 0 & 0 & 0 & 1 & 0 & 0 \\
0 & 1 & 0 & 0 & 0 & 0 & -1 & 0 & 0 \\
0 & -1 & 0 & 0 & 0 & 0 & 1 & 0 & 0 \\
1 & 1 & -1 & 0 & 0 & 0 & -1 & 0 & 0 \\
-1 & -1 & 1 & 0 & 0 & 0 & 1 & 0 & 0
\end{bmatrix} , [\quad]\right).\]
Here, we can obtain $\cQ_{\cX,\cY,\cZ}$ from the definition by noting that \[Q(\{(1, 0), (0, 1), (-1, 1)\}) = Q(\{(1, 0)\})\] which gives us $3$ independent equalities. However, there is an optimization that we can make here. Note that since the $K$-set already enforces equalities on each of $\{(1, 0), (0, 1), (-1, )\}, \{(1, 0)\}, \{\}$, we can in fact get away with just one equality given by $Q(1, 0) = Q(-1, 1)$. Thus, we can instead used 
\[\cP\left(\begin{bmatrix}
1 & 0 & 0 & 0 & 0 & 0 & -1 & 0 & 0 \\
-1 & 0 & 0 & 0 & 0 & 0 & 1 & 0 & 0 \\
\end{bmatrix} , [\quad]\right)\] as $\cQ_{\cX,\cY,\cZ}$. Indeed, for our implementation, that is what we do. 

Finally, we get the refinement $P'$ as follows, \[P' = P \intersect \cK_{\cX,\cY,\cZ} \intersect \cQ_{\cX,\cY,\cZ},\] which obtains \[P' = \cP\left(\left[\begin{array}{rrrrrrrrr}
-1 & 1 & 0 & 0 & 0 & 0 & 0 & 0 & 0 \\
1 & 0 & -1 & 0 & 0 & 0 & 0 & 0 & 0 \\
0 & 0 & 0 & 0 & 0 & 0 & -1 & 1 & 0 \\
0 & 0 & 0 & 0 & 0 & 0 & 1 & 0 & -1 \\
0 & 0 & 0 & 0 & 0 & 0 & 0 & 0 & 1 \\
0 & 0 & 0 & -1 & 1 & 0 & 0 & 0 & 0 \\
0 & 0 & 0 & 1 & 0 & -1 & 0 & 0 & 0 \\
0 & 0 & 0 & 0 & 0 & 1 & 0 & 0 & 0 \\
-1 & 0 & 1 & 0 & 0 & 0 & 0 & 0 & 0 \\
1 & 0 & 0 & 0 & 0 & 0 & -1 & 0 & 0 \\
-1 & 0 & 0 & 0 & 0 & 0 & 1 & 0 & 0 \\
-1 & -1 & 1 & 0 & 0 & 0 & 1 & 0 & 0 \\
1 & 0 & 0 & 0 & 0 & 0 & 0 & 0 & 0 \\
0 & 0 & 0 & 1 & 0 & 0 & 0 & 0 & 0 \\
0 & 0 & 0 & 0 & 0 & 0 & 1 & 0 & 0
\end{array}\right], \left[\begin{array}{rrrrrrrrr}
1 & 0 & 0 & 0 & 0 & 0 & 0 & 0 & 0 \\
0 & 0 & 0 & 1 & 0 & 0 & 0 & 0 & 0 \\
0 & 0 & 0 & 0 & 0 & 0 & 1 & 0 & 0
\end{array}\right] \right).\]

\noindent \underline{$\vec{n} = (0, 3, 2)$}. Then, we have 
\begin{equation*}
    \begin{split}
    \minimum_0(\Z^{\oplus 2}_{\tilde{*}}) &= \{\{\}\} \\ 
    \minimum_3(\Z^{\oplus 2}_{\tilde{*}}) &= \{\{(1, 0), (0, 1), (-1, 1)\}\} \\ 
    \minimum_2(\Z^{\oplus 2}_{\tilde{*}}) &= \{\{(1, 0), (0, 1)\}\}\\
    \end{split}
\end{equation*}
Now, we will iterate over each triplet  $(\mathcal{X}, \mathcal{Y}, \mathcal{Z}) \in \{\{\}\} \times \{\{(1, 0), (0, 1), (-1, 1)\}\} \times \{\{(1, 0), (0, 1)\}\}$. The only such triplet is  $(\mathcal{X}, \mathcal{Y}, \mathcal{Z}) = (\{\}, \{(1, 0), (0, 1), (-1, 1)\}, \{(1, 0), (0, 1)\})$. Then, 
\[\cK_{\cX,\cY,\cZ} = \cP \left(\left[\begin{array}{rrrrrrrrr}
0 & 0 & 0 & -1 & 1 & 0 & 0 & 0 & 0 \\
0 & 0 & 0 & 1 & 0 & -1 & 0 & 0 & 0 \\
0 & 0 & 0 & 0 & 0 & 2 & 0 & 0 & 0 \\
0 & 0 & 0 & 1 & -1 & 0 & 0 & 0 & 0 \\
0 & 0 & 0 & -1 & 1 & 0 & 0 & 0 & 0 \\
0 & 0 & 0 & 0 & -1 & 1 & 0 & 0 & 0 \\
0 & 0 & 0 & 0 & 1 & -1 & 0 & 0 & 0 \\
0 & 0 & 0 & 0 & 0 & 0 & -1 & 1 & 0 \\
0 & 0 & 0 & 0 & 0 & 0 & 1 & 0 & -1 \\
0 & 0 & 0 & 0 & 0 & 0 & 1 & -1 & 0 \\
0 & 0 & 0 & 0 & 0 & 0 & -1 & 1 & 0
\end{array}\right],  [\quad] \right),\]
and using the optimization described in the previous case, this time we will use a single equality to obtain
\[\cQ_{\cX,\cY,\cZ} = \cP\left(\begin{bmatrix}
0 & 0 & 0 & 1 & 0 & 0 & -1 & 0 & 0 \\
0 & 0 & 0 & -1 & 0 & 0 & 1 & 0 & 0 \\
\end{bmatrix} , [\quad]\right).\]
Here, we can obtain $\cQ_{\cX,\cY,\cZ}$ using the equality $Q_2(\{(1, 0)\}) = Q_3(\{(1, 0)\})$.

Finally, we get the refinement $P'$ as follows, \[P' = P \intersect \cK_{\cX,\cY,\cZ} \intersect \cQ_{\cX,\cY,\cZ},\] which obtains \[P' = \cP\left(\left[\begin{array}{rrrrrrrrr}
-1 & 1 & 0 & 0 & 0 & 0 & 0 & 0 & 0 \\
1 & 0 & -1 & 0 & 0 & 0 & 0 & 0 & 0 \\
0 & 0 & 1 & 0 & 0 & 0 & 0 & 0 & 0 \\
0 & 0 & 0 & 0 & 0 & 0 & -1 & 1 & 0 \\
0 & 0 & 0 & 0 & 0 & 0 & 1 & 0 & -1 \\
0 & 0 & 0 & 0 & 0 & 0 & 0 & 0 & 1 \\
0 & 0 & 0 & -1 & 1 & 0 & 0 & 0 & 0 \\
0 & 0 & 0 & 1 & 0 & -1 & 0 & 0 & 0 \\
0 & 0 & 0 & -1 & 0 & 1 & 0 & 0 & 0 \\
0 & 0 & 0 & 1 & 0 & 0 & -1 & 0 & 0 \\
0 & 0 & 0 & -1 & 0 & 0 & 1 & 0 & 0 \\
0 & 0 & 0 & -1 & -1 & 1 & 1 & 0 & 0 \\
0 & 0 & 0 & 1 & 0 & 0 & 0 & -1 & 0 \\
1 & 0 & 0 & 0 & 0 & 0 & 0 & 0 & 0 \\
0 & 0 & 0 & 1 & 0 & 0 & 0 & 0 & 0 \\
0 & 0 & 0 & 0 & 0 & 0 & 1 & 0 & 0
\end{array}\right], \left[\begin{array}{rrrrrrrrr}
1 & 0 & 0 & 0 & 0 & 0 & 0 & 0 & 0 \\
0 & 0 & 0 & 1 & 0 & 0 & 0 & 0 & 0 \\
0 & 0 & 0 & 0 & 0 & 0 & 1 & 0 & 0
\end{array}\right] \right).\] 

Thus, the refinement of $P$ is given by the collection of all the different $P'$ for different values of $\vec{n} \in \cL_{1, 2}$. 

\medskip
\noindent {\large \textbf{Example 2.}} Suppose we start with a polyhedral cone $P$ with the following description, 
\[P = \cP\left(\left[\begin{array}{rrrrrrrrr}
-1 & 1 & 0 & 0 & 0 & 0 & 0 & 0 & 0 \\
1 & 0 & -1 & 0 & 0 & 0 & 0 & 0 & 0 \\
0 & 0 & 1 & 0 & 0 & 0 & 0 & 0 & 0 \\
0 & 0 & 0 & 0 & 0 & 0 & -1 & 1 & 0 \\
0 & 0 & 0 & 0 & 0 & 0 & 1 & 0 & -1 \\
0 & 0 & 0 & 0 & 0 & 0 & 0 & 0 & 1 \\
0 & 0 & 0 & -1 & 1 & 0 & 0 & 0 & 0 \\
0 & 0 & 0 & 1 & 0 & -1 & 0 & 0 & 0 \\
0 & 0 & 0 & -1 & 0 & 1 & 0 & 0 & 0 \\
0 & 0 & 0 & 1 & 0 & 0 & -1 & 0 & 0 \\
0 & 0 & 0 & -1 & 0 & 0 & 1 & 0 & 0 \\
0 & 0 & 0 & -1 & -1 & 1 & 1 & 0 & 0 \\
0 & 0 & 0 & 1 & 0 & 0 & 0 & -1 & 0 \\
1 & 0 & 0 & 0 & 0 & 0 & 0 & 0 & 0 \\
0 & 0 & 0 & 1 & 0 & 0 & 0 & 0 & 0 \\
0 & 0 & 0 & 0 & 0 & 0 & 1 & 0 & 0
\end{array}\right], \left[\begin{array}{rrrrrrrrr}
1 & 0 & 0 & 0 & 0 & 0 & 0 & 0 & 0 \\
0 & 0 & 0 & 1 & 0 & 0 & 0 & 0 & 0 \\
0 & 0 & 0 & 0 & 0 & 0 & 1 & 0 & 0
\end{array}\right] \right)\]
with covering parameter $((\{\}), (\{(1, 0), (0, 1), (-1, 1)\}), (\{(1, 0), (0, 1)\}))$. We will show an example of refining $P$ for when $\vec{n} = (3, 0, 1)$. Then, 
\begin{equation*}
    \begin{split}
        & \minimum_3(\Z^{\oplus 2}_{\tilde{*}} \setminus \{\}) = \{\{(1, 0), (0, 1), (-1, 1)\}\} \\
        & \minimum_0(\Z^{\oplus 2}_{\tilde{*}} \setminus \{(1, 0), (0, 1), (-1, 1)\}) = \{\{\}\} \\
        & \minimum_1(\Z^{\oplus 2}_{\tilde{*}} \setminus \{(1, 0), (0, 1)\}) = \{\{(-1, 1)\}\} \\
    \end{split}
\end{equation*}
Now, we will iterate over each triplet  $(\mathcal{X}, \mathcal{Y}, \mathcal{Z}) \in  \{\{(1, 0), (0, 1), (-1, 1)\}\} \times \{ \{\} \} \times \{\{(-1, 1)\}\}$.  The only such triplet is  $(\mathcal{X}, \mathcal{Y}, \mathcal{Z}) = (\{(1, 0), (0, 1), (-1, 1)\}, \{\} , \{(-1, 1)\})$. Then, 
\[\cK_{\cX,\cY,\cZ} = \cP \left(\left[\begin{array}{rrrrrrrrr}
-1 & 1 & 0 & 0 & 0 & 0 & 0 & 0 & 0 \\
1 & 0 & -1 & 0 & 0 & 0 & 0 & 0 & 0 \\
0 & 0 & 2 & 0 & 0 & 0 & 0 & 0 & 0 \\
1 & -1 & 0 & 0 & 0 & 0 & 0 & 0 & 0 \\
-1 & 1 & 0 & 0 & 0 & 0 & 0 & 0 & 0 \\
0 & -1 & 1 & 0 & 0 & 0 & 0 & 0 & 0 \\
0 & 1 & -1 & 0 & 0 & 0 & 0 & 0 & 0 \\
0 & 0 & 0 & -1 & 1 & 0 & 0 & 0 & 0 \\
0 & 0 & 0 & 1 & 0 & -1 & 0 & 0 & 0 \\
0 & 0 & 0 & 0 & 0 & 2 & 0 & 0 & 0 \\
0 & 0 & 0 & 1 & -1 & 0 & 0 & 0 & 0 \\
0 & 0 & 0 & -1 & 1 & 0 & 0 & 0 & 0 \\
0 & 0 & 0 & 0 & -1 & 1 & 0 & 0 & 0 \\
0 & 0 & 0 & 0 & 1 & -1 & 0 & 0 & 0 \\
0 & 0 & 0 & 0 & 0 & 0 & -1 & 1 & 0 \\
0 & 0 & 0 & 0 & 0 & 0 & 1 & 0 & -1 \\
0 & 0 & 0 & 0 & 0 & 0 & 0 & 0 & 2 \\
0 & 0 & 0 & 0 & 0 & 0 & 1 & -1 & 0 \\
0 & 0 & 0 & 0 & 0 & 0 & -1 & 1 & 0
\end{array}\right],  [\quad] \right),\]
We will use the equality $Q_1((1, 0)) = Q_3((-1, 1))$ to obtain
\[\cQ_{\cX,\cY,\cZ} = \cP\left(\begin{bmatrix}
1 & 0 & 0 & 0 & 0 & 0 & -1 & -1 & 1 \\
-1 & 0 & 0 & 0 & 0 & 0 & 1 & 1 & -1 \\
\end{bmatrix} , []\right).\]
Finally, we get the refinement $P'$ as follows, \[P' = P \intersect \cK_{\cX,\cY,\cZ} \intersect \cQ_{\cX,\cY,\cZ},\] which obtains \[P' = \cP\left(\left[\begin{array}{rrrrrrrrr}
-1 & 1 & 0 & 0 & 0 & 0 & 0 & 0 & 0 \\
1 & 0 & -1 & 0 & 0 & 0 & 0 & 0 & 0 \\
0 & 0 & 0 & 0 & 0 & 0 & -1 & 1 & 0 \\
0 & 0 & 0 & 0 & 0 & 0 & 1 & 0 & -1 \\
0 & 0 & 0 & 0 & 0 & 0 & 0 & 0 & 1 \\
0 & 0 & 0 & -1 & 1 & 0 & 0 & 0 & 0 \\
0 & 0 & 0 & 1 & 0 & -1 & 0 & 0 & 0 \\
0 & 0 & 0 & -1 & 0 & 1 & 0 & 0 & 0 \\
0 & 0 & 0 & 1 & 0 & 0 & -1 & 0 & 0 \\
0 & 0 & 0 & -1 & 0 & 0 & 1 & 0 & 0 \\
0 & 0 & 0 & -1 & -1 & 1 & 1 & 0 & 0 \\
0 & 0 & 0 & 1 & 0 & 0 & 0 & -1 & 0 \\
-1 & 0 & 1 & 0 & 0 & 0 & 0 & 0 & 0 \\
1 & 0 & 0 & 0 & 0 & 0 & -1 & -1 & 1 \\
-1 & 0 & 0 & 0 & 0 & 0 & 1 & 1 & -1 \\
-1 & -1 & 1 & 0 & 0 & 0 & 1 & 1 & -1 \\
1 & 0 & 0 & 0 & 0 & 0 & 0 & 0 & 0 \\
0 & 0 & 0 & 1 & 0 & 0 & 0 & 0 & 0 \\
0 & 0 & 0 & 0 & 0 & 0 & 1 & 0 & 0
\end{array}\right], \left[\begin{array}{rrrrrrrrr}
1 & 0 & 0 & 0 & 0 & 0 & 0 & 0 & 0 \\
0 & 0 & 0 & 1 & 0 & 0 & 0 & 0 & 0 \\
0 & 0 & 0 & 0 & 0 & 0 & 1 & 0 & 0
\end{array}\right] \right).\] 

\subsection{\texorpdfstring{Explicit Descriptions of $T_0, T_1, T_2, T_3$ and $P_0, P_1, P_2, P_3$}{T0, T1, T2, T3 and P0, P1, P2, P3}}
\label{subsec:p1p2p3p4}

Here, we will give polyhedral cone descriptions of $T_0, T_1, T_2, T_3$ as described in \autoref{sec:a_plus_b_ge_4}.

\[T_0 = \cP\left(\left[\begin{array}{rrrrrrrrr}
-1 & 1 & 0 & 0 & 0 & 0 & 0 & 0 & 0 \\
1 & 0 & -1 & 0 & 0 & 0 & 0 & 0 & 0 \\
0 & 0 & 1 & 0 & 0 & 0 & 0 & 0 & 0 \\
0 & 0 & 0 & 0 & 0 & 0 & -1 & 1 & 0 \\
0 & 0 & 0 & 0 & 0 & 0 & 1 & 0 & -1 \\
0 & 0 & 0 & 0 & 0 & 0 & 0 & 0 & 1 \\
0 & 0 & 0 & -1 & 1 & 0 & 0 & 0 & 0 \\
0 & 0 & 0 & 1 & 0 & -1 & 0 & 0 & 0 \\
0 & 0 & 0 & 0 & 0 & 1 & 0 & 0 & 0 \\
1 & 0 & 0 & 0 & 0 & 0 & 0 & 0 & 0 \\
0 & 0 & 0 & 1 & 0 & 0 & 0 & 0 & 0 \\
0 & 0 & 0 & 0 & 0 & 0 & 1 & 0 & 0
\end{array}\right], \left[\begin{array}{rrrrrrrrr}
1 & 0 & 0 & 0 & 0 & 0 & 0 & 0 & 0 \\
0 & 0 & 0 & 1 & 0 & 0 & 0 & 0 & 0 \\
0 & 0 & 0 & 0 & 0 & 0 & 1 & 0 & 0
\end{array}\right] \right),\]
\[T_1 = \cP\left( \left[\begin{array}{rrrrrrrrr}
-1 & 1 & 0 & 0 & 0 & 0 & 0 & 0 & 0 \\
1 & 0 & -1 & 0 & 0 & 0 & 0 & 0 & 0 \\
0 & 0 & 1 & 0 & 0 & 0 & 0 & 0 & 0 \\
0 & 0 & 0 & 0 & 0 & 0 & -1 & 1 & 0 \\
0 & 0 & 0 & 0 & 0 & 0 & 1 & 0 & -1 \\
0 & 0 & 0 & 0 & 0 & 0 & 0 & 0 & 1 \\
0 & 0 & 0 & -1 & 1 & 0 & 0 & 0 & 0 \\
0 & 0 & 0 & 1 & 0 & -1 & 0 & 0 & 0 \\
0 & 0 & 0 & 0 & 0 & 1 & 0 & 0 & 0 \\
1 & 0 & 0 & 0 & 0 & 0 & -1 & 0 & 0 \\
-1 & 0 & 0 & 0 & 0 & 0 & 1 & 0 & 0 \\
0 & 0 & 0 & 1 & 0 & 0 & -1 & 0 & 0 \\
0 & 0 & 0 & -1 & 0 & 0 & 1 & 0 & 0 \\
1 & 0 & 0 & 0 & 0 & 0 & 0 & 0 & 0 \\
0 & 0 & 0 & 1 & 0 & 0 & 0 & 0 & 0 \\
0 & 0 & 0 & 0 & 0 & 0 & 1 & 0 & 0
\end{array}\right], \left[\begin{array}{rrrrrrrrr}
1 & 0 & 0 & 0 & 0 & 0 & 0 & 0 & 0 \\
0 & 0 & 0 & 1 & 0 & 0 & 0 & 0 & 0 \\
0 & 0 & 0 & 0 & 0 & 0 & 1 & 0 & 0
\end{array}\right] \right),\]
\[T_2 = \cP\left( \left[\begin{array}{rrrrrrrrr}
-1 & 1 & 0 & 0 & 0 & 0 & 0 & 0 & 0 \\
1 & 0 & -1 & 0 & 0 & 0 & 0 & 0 & 0 \\
0 & 0 & 1 & 0 & 0 & 0 & 0 & 0 & 0 \\
0 & 0 & 0 & 0 & 0 & 0 & -1 & 1 & 0 \\
0 & 0 & 0 & 0 & 0 & 0 & 1 & 0 & -1 \\
0 & 0 & 0 & 0 & 0 & 0 & 0 & 0 & 1 \\
0 & 0 & 0 & -1 & 1 & 0 & 0 & 0 & 0 \\
0 & 0 & 0 & 1 & 0 & -1 & 0 & 0 & 0 \\
0 & 0 & 0 & 0 & 0 & 1 & 0 & 0 & 0 \\
1 & 0 & 0 & 0 & 0 & 0 & -1 & 0 & 0 \\
-1 & 0 & 0 & 0 & 0 & 0 & 1 & 0 & 0 \\
0 & 0 & 0 & 1 & 0 & 0 & -1 & 0 & 0 \\
0 & 0 & 0 & -1 & 0 & 0 & 1 & 0 & 0 \\
0 & 1 & 0 & 0 & 0 & 0 & 0 & -1 & 0 \\
0 & -1 & 0 & 0 & 0 & 0 & 0 & 1 & 0 \\
0 & 0 & 0 & 0 & 1 & 0 & 0 & -1 & 0 \\
0 & 0 & 0 & 0 & -1 & 0 & 0 & 1 & 0 \\
1 & 0 & 0 & 0 & 0 & 0 & 0 & 0 & 0 \\
0 & 0 & 0 & 1 & 0 & 0 & 0 & 0 & 0 \\
0 & 0 & 0 & 0 & 0 & 0 & 1 & 0 & 0
\end{array}\right], \left[\begin{array}{rrrrrrrrr}
1 & 0 & 0 & 0 & 0 & 0 & 0 & 0 & 0 \\
0 & 0 & 0 & 1 & 0 & 0 & 0 & 0 & 0 \\
0 & 0 & 0 & 0 & 0 & 0 & 1 & 0 & 0
\end{array}\right]\right),\]
\[T_3 = \cP\left(\left[\begin{array}{rrrrrrrrr}
-1 & 1 & 0 & 0 & 0 & 0 & 0 & 0 & 0 \\
0 & 0 & 0 & 0 & 0 & 0 & -1 & 1 & 0 \\
0 & 0 & 0 & 0 & 0 & 0 & 1 & 0 & -1 \\
0 & 0 & 0 & -1 & 1 & 0 & 0 & 0 & 0 \\
0 & 0 & 0 & 0 & 0 & 1 & 0 & 0 & 0 \\
1 & 0 & 0 & 0 & 0 & 0 & -1 & 0 & 0 \\
-1 & 0 & 0 & 0 & 0 & 0 & 1 & 0 & 0 \\
0 & 0 & 0 & 1 & 0 & 0 & -1 & 0 & 0 \\
0 & 0 & 0 & -1 & 0 & 0 & 1 & 0 & 0 \\
0 & 1 & 0 & 0 & 0 & 0 & 0 & -1 & 0 \\
0 & -1 & 0 & 0 & 0 & 0 & 0 & 1 & 0 \\
0 & 0 & 0 & 0 & 1 & 0 & 0 & -1 & 0 \\
0 & 0 & 0 & 0 & -1 & 0 & 0 & 1 & 0 \\
1 & 1 & -1 & 0 & 0 & 0 & -1 & -1 & 1 \\
-1 & -1 & 1 & 0 & 0 & 0 & 1 & 1 & -1 \\
0 & 0 & 0 & 1 & 1 & -1 & -1 & -1 & 1 \\
0 & 0 & 0 & -1 & -1 & 1 & 1 & 1 & -1 \\
1 & 0 & 0 & 0 & 0 & 0 & 0 & 0 & 0 \\
0 & 0 & 0 & 1 & 0 & 0 & 0 & 0 & 0 \\
0 & 0 & 0 & 0 & 0 & 0 & 1 & 0 & 0
\end{array}\right], \left[\begin{array}{rrrrrrrrr}
1 & 0 & 0 & 0 & 0 & 0 & 0 & 0 & 0 \\
0 & 0 & 0 & 1 & 0 & 0 & 0 & 0 & 0 \\
0 & 0 & 0 & 0 & 0 & 0 & 1 & 0 & 0
\end{array}\right] \right).\]

\medskip
The covering parameters $P_0, P_1, P_2, P_3$ as described in \autoref{sec:a_plus_b_ge_4} are given by 

\[P_0 = ((), (), ()),\]
\[P_1 = ((\{((1, 0))\}), (\{((1, 0))\}), (\{((1, 0))\})),\]
\[P_2 = ((\{(1, 0)\}, \{(0, 1)\}), (\{(1, 0)\}, \{(0, 1)\}), (\{(1, 0)\}, \{(0, 1)\})),\]
\[P_3 = ((\{(1, 0)\}, \{(0, 1)\}, \{(-1, 1)\}), (\{(1, 0)\}, \{(0, 1)\}, \{(-1, 1)\}), (\{(1, 0)\}, \{(0, 1), \{(-1, 1)\}\})).\]

\subsection{Explicit Indexing in \autoref{lemma:succ_min_extended}}
\label{subsec:explicit_indexing}

The indexing procedure from \autoref{lemma:succ_min_extended} as described in  \autoref{eqn:1_to_c1}, \autoref{eqn:c1plus1_to_c1plusc2}, and  \autoref{eqn:c1plusc2plus1_to_c1plusc2plusc3} is  explicitly illustrated as follows:

\[\underbrace{\underbrace{\vec{s}_{ik_i}, \ldots, \vec{s}_{i(k_i + L_{1, i} - 1)}}_{L_{1, i} \text{ elements }}, \ldots, \underbrace{\vec{s}_{i(k_i + (c_1 - 1)L_{1, i})}, \ldots ,\vec{s}_{i(k_i + c_1L_{1, i} - 1)}}_{L_{1, i}\text{ elements }}}_{c_1 \text{ sets}},\]
\[\underbrace{\underbrace{\vec{s}_{i(k_i + c_1L_{1, i})}, \ldots, \vec{s}_{i(k_i + c_1L_{1, i} + L_{2, i} - 1)}}_{L_{2, i}\text{ elements }}, \ldots, \underbrace{\vec{s}_{i(k_i + c_1L_{1, i} + (c_2 - 1)L_{2 , i})}, \ldots ,\vec{s}_{i(k_i + c_1L_{i , 1} + c_2L_{2, i} - 1)}}_{L_{2, i}\text{ elements }}}_{c_2 \text{ sets}},\]
\[\underbrace{\underbrace{\vec{s}_{i(k_i + c_1L_{1, i} + c_2L_{2, i})}, \ldots, \vec{s}_{i(k_i + c_1L_{1, i} + c_2L_{2, i} + L_{3, i} - 1)}}_{L_{3, i}\text{ elements }}, \ldots, \underbrace{\vec{s}_{i(k_i + c_1L_{1, i} + c_2L_{2, i} + (c_3 - 1)L_{3, i})}, \ldots ,\vec{s}_{i(k_i + c_1L_{1, i} + c_2L_{2, i} + c_3L_{3, i} - 1)}}_{L_{3, i}\text{ elements }}}_{c_3 \text{ sets}}.\]

\end{document}